\theoremstyle{plain}
\newtheorem{theorem}{Theorem}[section]
\newtheorem{proposition}[theorem]{Proposition}
\newtheorem{definition}[theorem]{Definition}
\newtheorem{notation}[theorem]{Notation}
\newtheorem{corollary}[theorem]{Corollary}
\newtheorem{remark}[theorem]{Remark}
\newtheorem{lemma}[theorem]{Lemma}
\newtheorem{example}[theorem]{Example}
\def\eps{\epsilon}
\def\Rumin{Rumin }
\def\TFF{T_{FF}}
\def\TR{T_{R}}
\def\TAK{T_{AK}}
\def\FF{\mathfrak{FF}}
\def\Ru{\mathfrak{Ru}}
\def\BB{\mathfrak{B}}
\newcommand{\E}{\mathbb E}
\newcommand{\rn}[1]{{\mathbb R}^{#1}}
\newcommand{\R}{\mathbb R}
\newcommand{\Z}{\mathbb Z}
\newcommand{\cov}[1]{{\bigwedge\nolimits^{#1}{\mfrak h}}}
\newcommand{\covw}[2]{{\bigwedge\nolimits^{#1,#2}{\mfrak h}}}
\newcommand{\covH}[1]{{\bigwedge\nolimits^{#1}{\mfrak h}}}
\newcommand{\covh}[1]{{\bigwedge\nolimits^{#1}{\mfrak h_1}}}
\newcommand{\he}[1]{{\mathbb H}^{#1}}
\newcommand{\scal}[2]{\langle {#1} , {#2}\rangle}
\newcommand{\Scal}[2]{\langle {#1} \vert {#2}\rangle}
\newcommand{\scalp}[3]{\langle {#1} , {#2}\rangle_{#3}}
\newcommand{\mc}{\mathcal }
\newcommand{\mfrak}{\mathfrak}
\newcommand{\res}{\mathop{\hbox{\vrule height 7pt width .5pt depth 0pt
\vrule height .5pt width 6pt depth 0pt}}\nolimits}
\begin{document}

\title[Currents in Heisenberg groups
] 
{Currents in Heisenberg groups
}

\author[Bruno Franchi, Pierre Pansu]{
Bruno Franchi\\ Pierre Pansu
}

\begin{abstract}
There are three approaches to currents tuned to the anisotro\-pic geometry of Heisenberg groups: Ambrosio and Kirchheim's approach valid for general metric spaces; distributions dual to horizontal differential forms; distributions dual to Rumin's complex. It is shown that, in dimensions less than half the ambient dimension, these three theories coincide. On the other hand, they diverge beyond middle dimension: Ambrosio-Kirchheim currents vanish, Rumin currents correspond to a new class of Federer-Fleming currents called \emph{oblique currents}.
\end{abstract} 

\maketitle 

\tableofcontents

\section{Introduction}

\subsection{Currents}
The notion of currents in Euclidean space (or, more generally, in Riemannian manifolds) goes back to G. de Rham \cite{deRham}, H. Whitney \cite{whitney}, and H. Federer \& W.H. Fleming \cite{federer_fleming}. 
Roughly speaking, currents are distributions defined on the space of differential forms generalizing the notion of submanifolds to nonsmooth objects. 
Quoting \cite{federer_fleming}, ``the theory of currents is designed to permit a treatment of surfaces that may be very irregular, while retaining enough structure to support the familiar geometric operations.''  
For a general overview of the theory of currents, we refer e.g. to \cite{federer_fleming}, \cite{federer}, \cite{morgan}, \cite{GMS},\cite{simon}.

\subsection{Anisotropic geometries}

In 2000, L. Ambrosio \& B. Kirchheim \cite{AK_acta}, following ideas of E. De Giorgi \cite{degiorgi}, presented a theory of currents in complete metric spaces, including masses, normal currents, rectifiable and integral currents. Their rectifiable and integral currents are currents of integration on rectifiable sets modelled on Euclidean spaces. 
In \cite{AK_mathann}, they further investigated the rectifiability of sets in metric spaces and they focussed on the prominent example of Heisenberg groups endowed with dilation-homogeneous metrics.

Here is the context. The large scale geometry of the $2n+1$-dimensional Heisenberg group $\he n$ is governed by a one-parameter group $(\mathfrak s_\lambda)$ of automorphisms, which replace the homotheties of vectorspaces. Let us call them \emph{anisotropic dilations}. 
Left-invariant distances homogeneous under anisotropic dilations are efficient tools for understanding large scale geometry. For instance, their Hausdorff dimension $2n+2$ reflects the growth of the volume of large balls. Popular examples of anisotropic metrics are the Cygan-Kor\'anyi metric or the subRiemannian metric defined in Section \ref{rumin heisenberg}.

\subsection{Anisotropic currents}

Ambrosio \& Kirchheim discovered that the anisotropic $\he n$ contained no rectifiable sets in their sense, hence no rectifiable currents, of dimension $>n$. This is an indication that interesting phenomena might occur when comparing \emph{metric currents} with other notions of anisotropic currents inspired by differential geometry.

Differential forms on $\he{n}$ split into vertical and horizontal differential forms. As a consequence, classical Federer-Fleming currents split into \emph{horizontal} and \emph{vertical currents}, subspaces which behave well under anisotropic dilations (see Section \ref{rumin and FF}). Masses, normal currents and, in dimensions $\le n$, integral currents make sense in these subspaces. 
However, vertical currents are rare and do not lead to a satisfactory theory in dimensions $>n$.

To recover a full dilation-invariant chain complex of forms, M. Rumin, in \cite{rumin_jdg}, had to redesign differential forms. This led him to a substitute for de Rham's complex, that recovers scale invariance under $(\mathfrak s_\lambda)$. We refer to Section \ref{rumin heisenberg} for a list of the main properties of Rumin's complex, as well as to \cite{rumin_jdg} and \cite{BFTT} for details of the construction. By duality, Rumin's differential forms give rise to a notion of current in $\he n$. This construction appeared first in \cite{FSSC_advances} (see also \cite{BFTT}, \cite{V}). This theory of \emph{Rumin currents} has notions of masses, normal currents, and, in dimensions $\le n$, integral currents.

Rumin himself has suggested an alternative to horizontal/vertical currents, inspired by his embedding of Rumin's complex as a subcomplex of de Rham's complex. In this paper, these will be called Federer-Fleming \emph{oblique currents}. They form a subcomplex of the chain complex of Federer-Fleming currents. This theory has a notion of mass, called \emph{oblique mass}, which diverges from the usual mass in dimensions $>n$. On the other hand, in dimensions $\le n$, the theory coincides with horizontal currents. 

\subsection{Results}
So we have three possible approaches to anisotropic currents in Heisenberg groups. How are they related? 

Generalizing Ambrosio \& Kirchheim's result on rectifiable currents, M. Williams proved that all finite mass metric currents vanish in dimensions $>n$, \cite{Williams}. He suggested a possible connection between metric and Rumin currents in dimension $k\le n$. The following theorem confirms his intuition.

\begin{theorem} \label{main}
Let $\he n$ denote the $2n+1$-dimensional Heisenberg
group equipped with a subRiemannian or Koranyi-Cygan distance.
\begin{enumerate}
  \item In dimensions $k\le n$, there is a 1-1 correspondence between Rumin currents and Federer-Fleming horizontal currents. Ambrosio-Kirchheim metric currents embed in Federer-Fleming horizontal currents. Via this embedding, Ambrosio-Kirchheim metric currents of finite mass coincide with Federer-Fleming horizontal currents of finite mass. These correspondences are isomorphic (preserve masses up to a multiplicative constant) and commute with boundary operators.
  \item In dimensions $k\le n$, the above correspondences map integral currents to integral currents.
 % \item In dimensions $k>n$, the space of Ambrosio-Kirchheim normal metric currents vanishes.
  \item In dimensions $k>n$, there is a 1-1 correspondence between Rumin currents and Federer-Fleming oblique currents. This correspondence commutes with boundary operators, and there is an expression for the oblique mass (Rumin mass expressed in Federer-Fleming terms).
\end{enumerate}
\end{theorem}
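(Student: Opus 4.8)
The proof of Theorem \ref{main} will be organized around the embedding of Rumin's complex into the de Rham complex. The plan is to describe, in each dimension $k$, an explicit linear map $\Ru^k \to \Omega^k(\he n)$ (Rumin's section of the projection) and its adjoint on currents, then check the three assertions in turn. First I would recall from Section \ref{rumin heisenberg} the structure of Rumin's spaces: in degrees $k\le n$ one has $\Ru^k = \Omega^k/\mathcal I^k$ where $\mathcal I^k$ is the ideal generated by the contact form $\theta$, so the de Rham forms representing $\Ru^k$ are precisely the horizontal forms, and the dual statement is that Rumin $k$-currents are exactly Federer-Fleming currents that annihilate $\mathcal I^k$, i.e.\ horizontal currents. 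This gives the 1-1 correspondence in part (1) almost for free; the content is that the correspondence intertwines $d_c$ with $\partial$, which follows because Rumin's $d_c$ in low degrees is just the restriction of the exterior derivative composed with the horizontal projection, and the mass comparison, which I would get from the fact that on the fiber $\bigwedge^k\mathfrak h$ the Rumin norm and the Euclidean norm of horizontal covectors are two norms on the same finite-dimensional space, hence equivalent with constants depending only on $n$ and the choice of metric.

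**The Ambrosio--Kirchheim comparison.**

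For the AK part of (1) I would use the Ambrosio--Kirchheim structure theorem: a metric $k$-current $T$ of finite mass in $\he n$ is determined by its action on tuples $(f, \pi_1,\dots,\pi_k)$ of Lipschitz functions, and Lipschitz functions on the subRiemannian (or Cygan--Kor\'anyi) Heisenberg group are Pansu-differentiable a.e., with horizontal differential. The embedding $T \mapsto \widetilde T$ sends $(f,\pi_1,\dots,\pi_k)$ to $\widetilde T(f\,d\pi_1\wedge\cdots\wedge d\pi_k)$; the key point is that $d\pi_i$ is a.e.\ a horizontal form, so $\widetilde T$ lands in the space of horizontal Federer-Fleming currents, and locality plus continuity (the defining axioms of metric currents, together with finite mass) guarantee this extends to a well-defined FF current. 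Conversely, given a horizontal FF current $S$ of finite mass, one produces a metric current by restricting its action to exact horizontal forms $f\,d\pi_1\wedge\cdots\wedge d\pi_k$; the multilinearity, continuity and locality axioms of \cite{AK_acta} have to be verified, and the finite-mass hypothesis is what makes the two operations inverse to each other. The mass equality (not merely equivalence) on the AK side should come out because the AK mass is, by definition, the total variation of the measure $\|T\|$ computed against Lipschitz tuples, which matches the comonotone/horizontal mass of $\widetilde T$ once one normalizes the metric.

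**Integral currents and the high-dimensional case.**

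Part (2) reduces to checking that the correspondences of part (1) respect the defining properties of integral currents: finite mass, integer multiplicity, rectifiable support with rectifiable boundary. Since both the Rumin-to-FF and the AK-to-FF maps are the identity at the level of the underlying functional on horizontal forms, a current that is $\mathcal H^k$-rectifiable with integer density on one side is the same measure-theoretic object on the other; the only thing to confirm is that the boundary, which we already know is intertwined, again has finite mass, and that the notion of rectifiability used by Ambrosio--Kirchheim (Euclidean-rectifiable, the only option in dimensions $\le n$ by \cite{AK_mathann}) is the one relevant for FF integral currents as well. For part (3), in degrees $k>n$ Rumin's space is $\Ru^k = \ker(\theta\wedge) \subset \Omega^k$, a genuine \emph{subspace} of de Rham forms rather than a quotient, and the section map is the inclusion followed by the projection $P$ onto $\ker(\theta\wedge)$ along the complement. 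Dualizing, a Rumin $k$-current becomes the FF current $S\mapsto T(PS)$; its image is by definition the space of \emph{oblique currents}, and the boundary intertwining is Rumin's identity $d_c = P\circ d$ (up to second-order correction terms in the middle degree, which I would absorb into $P$). The oblique mass is then read off as $\mathbf M_{ob}(T) = \sup\{T(\alpha) : \alpha \text{ de Rham}, \|P^*\alpha\|_\infty \le 1\}$, i.e.\ the Rumin comass transported through $P^*$.

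**Main obstacle.**

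The step I expect to be the real obstacle is the equivalence, in part (1), between \emph{finite-mass} Ambrosio--Kirchheim currents and \emph{finite-mass} horizontal FF currents — specifically, showing that the map from FF back to AK is well-defined, i.e.\ that a finite-mass horizontal FF current, restricted to exact horizontal forms, actually satisfies the continuity axiom of metric currents (joint continuity under pointwise-bounded convergence of the $\pi_i$ in the Lipschitz topology). In the Euclidean case this is classical, but in $\he n$ one must use that Pansu differentials of Lipschitz functions converge appropriately, and that the horizontal mass measure $\|S\|$ controls the relevant integrals; this is where Williams' techniques from \cite{Williams} and the a.e.\ Pansu differentiability of Lipschitz functions enter, and getting the constants to match exactly (rather than up to a harmless factor) will require care about the normalization of the subRiemannian versus Cygan--Kor\'anyi metric. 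A secondary technical point is handling the middle dimension $k=n$ in parts (1)–(3) simultaneously, since Rumin's $d_c$ there is a second-order operator; I would treat $k=n$ as the top of the "low" range for the purpose of the current correspondence (where $\Ru^n$ is still a quotient) and note that the second-order nature of $d_c$ only affects the \emph{boundary} map $\Ru^n\to\Ru^{n+1}$, which lands in the "high" range and is handled by the oblique-current formalism of part (3).
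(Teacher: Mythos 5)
Your low-dimensional correspondence (part (1)) and the oblique-current dualization (part (3)) are broadly in the spirit of what the paper does with $\Pi_{E_0}$ and $\Pi_E$, but there are two genuine problems. The smaller one is in your AK comparison: the claim that ``$d\pi_i$ is a.e.\ a horizontal form'' is not correct and is not the mechanism by which the induced Federer-Fleming current is horizontal --- a Lipschitz function for the Carnot--Carath\'eodory distance has a well-controlled \emph{horizontal} (Pansu) differential, but its Euclidean differential need not be a horizontal $1$-form. The paper instead proves $\widetilde T_{AK}\res\theta=0$ by an anisotropic scaling argument: under $\mathfrak s_\lambda$ the form $\theta\wedge\phi$ scales like $\lambda^{k+1}$ while the mass of a metric $k$-current scales like $\lambda^{k}$, forcing the pairing to vanish. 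Also, in the converse direction (horizontal FF $\to$ metric) the verification of the continuity axiom is done by integration by parts and uses $\|\partial T_{FF}\|$, i.e.\ normality, not mass alone.

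The serious gap is part (2). You reduce it to the remark that an integral current ``is the same measure-theoretic object on the other side,'' assuming that Euclidean rectifiability with horizontal tangents transfers to Ambrosio--Kirchheim (metric) rectifiability. This is exactly the step the paper flags as the delicate point and gives strong evidence against: Example \ref{Cantor} exhibits a horizontal $C^1$ curve (hence a Euclidean-rectifiable, horizontal FF current) whose parametrization fails to be Pansu-differentiable at a density point of a positive-measure Cantor set, so horizontal FF rectifiable currents need not be metric rectifiable, and no object-by-object identification is available. The paper closes this gap by an approximation scheme: approximate the horizontal FF integral current by $C^1$ chains (Federer's approximation theorem), push them to genuinely horizontal $C^1$ chains using R.\ Young's horizontalization map combined with anisotropic rescaling and the random deformation theorem (Lemma \ref{apprhoriz}, with the mass bookkeeping of Lemma \ref{rescale}), prove that horizontal $C^1$ chains are metric integral currents (Lemma \ref{C1=>metric}, via $f^*\theta=0$ forcing CC-Lipschitz parametrizations, plus the mass-decomposition Lemma \ref{add}), and then invoke the Ambrosio--Kirchheim compactness theorem to pass to the limit; the full identification of integral currents is then obtained through the cycle Rumin $\Rightarrow$ FF $\Rightarrow$ AK $\Rightarrow$ Rumin. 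None of this machinery appears in your proposal, and without it part (2) does not follow.
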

In other words, in dimensions $k\le n$, for currents of finite mass, the three theories coincide. In dimensions $k>n$, they tend to diverge.

\medskip

Theorem \ref{main} leaves many questions unanswered.
\begin{enumerate}
  \item Do their exist nonzero Ambrosio-Kirchheim metric currents in dimensions $k>n$ at all?
  \item Does the above correspondence between metric and Rumin currents preserve masses exactly? 
  \item A notion of flat current can be defined in each theory. Do the above correspondences extend to flat currents?
  \item Spaces of Rumin normal currents with bounded normal mass are flat compact (see \cite{Julia}). Therefore, a notion of Rumin integral current (flat limit of currents of integration on certain submanifolds with bounded Rumin normal mass) can be defined also in dimensions $k>n$ (as suggested in \cite{FSSC_advances}). Can this notion be expressed in Federer-Fleming terms? In particular, can these objects be expressed as currents of integration?
\end{enumerate}

\medskip

The following picture summarizes the bijections established between various kinds of currents.

\begin{center}
\definecolor{wqwqwq}{rgb}{0.3764705882352941,0.3764705882352941,0.3764705882352941}
\definecolor{qqqqff}{rgb}{0.,0.,1.}
\definecolor{ffqqqq}{rgb}{1.,0.,0.}
\begin{tikzpicture}[line cap=round,line join=round,>=triangle 45,x=1.0cm,y=1.0cm]
\clip(-0.36886976647427416,-4.501912770291697) rectangle (12.063887539548935,3.2780175114691916);
\fill[color=wqwqwq,fill=wqwqwq,fill opacity=1.0] (3.,-2.2) -- (3.,-2.4) -- (4.6,-2.4) -- (4.6,-2.2) -- cycle;
\fill[color=wqwqwq,fill=wqwqwq,fill opacity=1.0] (3.2,-2.6) -- (3.2,-2.) -- (2.6803847577293367,-2.3) -- cycle;
\fill[color=wqwqwq,fill=wqwqwq,fill opacity=1.0] (4.4,-2.) -- (4.4,-2.6) -- (4.919615242270663,-2.3) -- cycle;
\fill[color=wqwqwq,fill=wqwqwq,fill opacity=1.0] (7.4,-2.2) -- (7.4,-2.4) -- (9.,-2.4) -- (9.,-2.2) -- cycle;
\fill[color=wqwqwq,fill=wqwqwq,fill opacity=1.0] (3.,-1.) -- (3.,-0.8) -- (4.6,-0.8) -- (4.6,-1.) -- cycle;
\fill[color=wqwqwq,fill=wqwqwq,fill opacity=1.0] (3.2,-1.2) -- (3.2,-0.6) -- (2.680384757729337,-0.9) -- cycle;
\fill[color=wqwqwq,fill=wqwqwq,fill opacity=1.0] (4.4,-0.6) -- (4.4,-1.2) -- (4.919615242270663,-0.9) -- cycle;
\fill[color=wqwqwq,fill=wqwqwq,fill opacity=1.0] (7.6,-2.6) -- (7.6,-2.) -- (7.080384757729337,-2.3) -- cycle;
\fill[color=wqwqwq,fill=wqwqwq,fill opacity=1.0] (8.8,-2.) -- (8.8,-2.6) -- (9.319615242270665,-2.3) -- cycle;
\fill[color=wqwqwq,fill=wqwqwq,fill opacity=1.0] (7.4,-1.) -- (7.4,-0.8) -- (9.,-0.8) -- (9.,-1.) -- cycle;
\fill[color=wqwqwq,fill=wqwqwq,fill opacity=1.0] (7.6,-1.2) -- (7.6,-0.6) -- (7.080384757729337,-0.9) -- cycle;
\fill[color=wqwqwq,fill=wqwqwq,fill opacity=1.0] (8.8,-0.6) -- (8.8,-1.2) -- (9.319615242270665,-0.9) -- cycle;
\fill[color=wqwqwq,fill=wqwqwq,fill opacity=1.0] (8.,1.4) -- (8.,1.2) -- (9.,1.2) -- (9.,1.4) -- cycle;
\fill[color=wqwqwq,fill=wqwqwq,fill opacity=1.0] (8.2,1.) -- (8.2,1.6) -- (7.6803847577293345,1.3) -- cycle;
\fill[color=wqwqwq,fill=wqwqwq,fill opacity=1.0] (8.8,1.6) -- (8.8,1.) -- (9.319615242270665,1.3) -- cycle;
\draw [domain=-0.36886976647427416:12.063887539548935] plot(\x,{(-0.-0.*\x)/12.});
\draw [->] (0.,-3.) -- (0.,3.);
\draw [rotate around={90.:(6.,-1.5)}] (6.,-1.5) ellipse (1.46040481324094cm and 1.0643224222655987cm);
\draw [rotate around={90.:(1.5,-1.5)},color=ffqqqq] (1.5,-1.5) ellipse (1.4604048132409453cm and 1.0643224222656027cm);
\draw [rotate around={90.:(10.5,-1.5)},color=qqqqff] (10.5,-1.5) ellipse (1.4604048132409528cm and 1.064322422265608cm);
\draw [rotate around={90.:(10.5,1.5)},color=qqqqff] (10.5,1.5) ellipse (1.4604048132409528cm and 1.064322422265608cm);
\draw [rotate around={90.:(6.5,1.5)}] (6.5,1.5) ellipse (1.4604048132409486cm and 1.0643224222656051cm);
\draw [rotate around={0.:(4.,1.)}] (4.,1.) ellipse (1.cm and 0.8660254037844386cm);
\draw [color=ffqqqq] (0.45138822059687417,-1.75)-- (2.548611779403126,-1.75);
\draw (4.9513882205968756,-1.75)-- (7.0486117794031244,-1.75);
\draw [color=qqqqff] (9.451388220596854,-1.75)-- (11.548611779403151,-1.75);
\draw [color=wqwqwq] (3.,-2.2)-- (3.,-2.4);
\draw [color=wqwqwq] (3.,-2.4)-- (4.6,-2.4);
\draw [color=wqwqwq] (4.6,-2.4)-- (4.6,-2.2);
\draw [color=wqwqwq] (4.6,-2.2)-- (3.,-2.2);
\draw [color=wqwqwq] (3.2,-2.6)-- (3.2,-2.);
\draw [color=wqwqwq] (3.2,-2.)-- (2.6803847577293367,-2.3);
\draw [color=wqwqwq] (2.6803847577293367,-2.3)-- (3.2,-2.6);
\draw [color=wqwqwq] (4.4,-2.)-- (4.4,-2.6);
\draw [color=wqwqwq] (4.4,-2.6)-- (4.919615242270663,-2.3);
\draw [color=wqwqwq] (4.919615242270663,-2.3)-- (4.4,-2.);
\draw [color=wqwqwq] (7.4,-2.2)-- (7.4,-2.4);
\draw [color=wqwqwq] (7.4,-2.4)-- (9.,-2.4);
\draw [color=wqwqwq] (9.,-2.4)-- (9.,-2.2);
\draw [color=wqwqwq] (9.,-2.2)-- (7.4,-2.2);
\draw [color=wqwqwq] (3.,-1.)-- (3.,-0.8);
\draw [color=wqwqwq] (3.,-0.8)-- (4.6,-0.8);
\draw [color=wqwqwq] (4.6,-0.8)-- (4.6,-1.);
\draw [color=wqwqwq] (4.6,-1.)-- (3.,-1.);
\draw [color=wqwqwq] (3.2,-1.2)-- (3.2,-0.6);
\draw [color=wqwqwq] (3.2,-0.6)-- (2.680384757729337,-0.9);
\draw [color=wqwqwq] (2.680384757729337,-0.9)-- (3.2,-1.2);
\draw [color=wqwqwq] (4.4,-0.6)-- (4.4,-1.2);
\draw [color=wqwqwq] (4.4,-1.2)-- (4.919615242270663,-0.9);
\draw [color=wqwqwq] (4.919615242270663,-0.9)-- (4.4,-0.6);
\draw [color=wqwqwq] (7.6,-2.6)-- (7.6,-2.);
\draw [color=wqwqwq] (7.6,-2.)-- (7.080384757729337,-2.3);
\draw [color=wqwqwq] (7.080384757729337,-2.3)-- (7.6,-2.6);
\draw [color=wqwqwq] (8.8,-2.)-- (8.8,-2.6);
\draw [color=wqwqwq] (8.8,-2.6)-- (9.319615242270665,-2.3);
\draw [color=wqwqwq] (9.319615242270665,-2.3)-- (8.8,-2.);
\draw [color=wqwqwq] (7.4,-1.)-- (7.4,-0.8);
\draw [color=wqwqwq] (7.4,-0.8)-- (9.,-0.8);
\draw [color=wqwqwq] (9.,-0.8)-- (9.,-1.);
\draw [color=wqwqwq] (9.,-1.)-- (7.4,-1.);
\draw [color=wqwqwq] (7.6,-1.2)-- (7.6,-0.6);
\draw [color=wqwqwq] (7.6,-0.6)-- (7.080384757729337,-0.9);
\draw [color=wqwqwq] (7.080384757729337,-0.9)-- (7.6,-1.2);
\draw [color=wqwqwq] (8.8,-0.6)-- (8.8,-1.2);
\draw [color=wqwqwq] (8.8,-1.2)-- (9.319615242270665,-0.9);
\draw [color=wqwqwq] (9.319615242270665,-0.9)-- (8.8,-0.6);
\draw [color=wqwqwq] (8.,1.4)-- (8.,1.2);
\draw [color=wqwqwq] (8.,1.2)-- (9.,1.2);
\draw [color=wqwqwq] (9.,1.2)-- (9.,1.4);
\draw [color=wqwqwq] (9.,1.4)-- (8.,1.4);
\draw [color=wqwqwq] (8.2,1.)-- (8.2,1.6);
\draw [color=wqwqwq] (8.2,1.6)-- (7.6803847577293345,1.3);
\draw [color=wqwqwq] (7.6803847577293345,1.3)-- (8.2,1.);
\draw [color=wqwqwq] (8.8,1.6)-- (8.8,1.);
\draw [color=wqwqwq] (8.8,1.)-- (9.319615242270665,1.3);
\draw [color=wqwqwq] (9.319615242270665,1.3)-- (8.8,1.6);
\draw [color=ffqqqq](0.6,-1.8791810059379455) node[anchor=north west] {$integral$};
\draw [color=ffqqqq](0.6,-0.8830473069766647) node[anchor=north west] {$normal$};
\draw (4.9,-0.7443451463618028) node[anchor=north west] {$horizontal$};
\draw (4.9,-1.7) node[anchor=north west] {$horizontal$};
\draw [color=qqqqff](9.6,-0.8830473069766647) node[anchor=north west] {$normal$};
\draw [color=qqqqff](9.6,-1.8917902932665693) node[anchor=north west] {$integral$};
\draw [color=qqqqff](9.6,1.878386617991949) node[anchor=north west] {$normal$};
\draw (5.8,1.8657773306633254) node[anchor=north west] {$oblique$};
\draw (3.2,1.3235779755325017) node[anchor=north west] {$vertical$};
\draw (0.16072030132793153,3.051050339553963) node[anchor=north west] {$dimension$};
\draw (0.08506457735618786,0.907471493687916) node[anchor=north west] {$dimension > n$};
\draw [color=ffqqqq](0.6,-3.8840576911891307) node[anchor=north west] {$metric$};
\draw (4.,-3.871448403860507) node[anchor=north west] {$Federer-Fleming$};
\draw [color=qqqqff](9.6,-3.8840576911891307) node[anchor=north west] {$Rumin$};
\draw (5.2,-1.1604516282063884) node[anchor=north west] {$normal$};
\draw (5.1,-2.1313667525104214) node[anchor=north west] {$integral$};
\end{tikzpicture}
\end{center}

\subsection{A new point of view}

On the Heisenberg groups, the anisotropic dilations split covectors, and hence, differential forms, into horizontal and vertical. Since Rumin forms in degrees $\le n$ are horizontal, one defines horizontal Federer-Fleming currents $T$ as those which vanish on vertical forms, as well as their boundary $\partial T$. Good try! They exist in abundance (every Legendrian submanifold relative to the left-invariant contact structure provides one), and they turn out to match Rumin currents in dimensions $\le n$, as is stated in Theorem \ref{main}. Integral horizontal currents constitute a sound framework for geometric measure theory of horizontal objects, i.e. up to dimension $n$.

Since Rumin forms in degrees $> n$ are vertical, one defines vertical Federer-Fleming currents $T$ as those which vanish on horizontal forms, as well as their boundary $\partial T$. Bad try! There are very few of them. In particular, none of them is compactly supported, see Proposition \ref{bad}.

Fortunately, Michel Rumin is here to pull us out of this ditch. In \cite{rumin_cras}, he embeds the complex bearing his name as a subcomplex $E$ of de Rham's complex, which is neither horizontal not vertical. Let us call it the \emph{oblique} subcomplex. In his celebrated lecture notes \cite{rumin_palermo}, he views currents as given by integration against differential forms $\alpha$ with distributional coefficients. Then he investigates what it means for a submanifold that its Poincar\'e dual (intersection) current be defined by an oblique form $\alpha\in E$. In low dimensions, he recovers Legendrian submanifolds. In high dimensions, he finds co-Legendrian submanifolds with co-Legendrian boundary. Recall that a submanifold is co-Legendrian if at each point, the intersection of its tangent space with the contact structure is co-isotropic, i.e. contains its symplectic-orthogonal complement. Thus \emph{oblique currents} exist in abundance. Theorem \ref{main} states that they match Rumin currents in dimensions $>n$. For currents of integration on $k$-dimensional submanifolds, the oblique mass coincides (up to densities) with the Hausdorff $k+1$-measure. We hope that oblique currents will constitute a useful addition to the toolbox of geometric measure theory in Heisenberg groups.

\subsection{Techniques}

The most delicate point is to show that Federer-Fleming horizontal integral currents are Ambrosio-Kirchheim metric integral currents. Indeed, there is strong evidence that horizontal rectifiable Federer-Fleming currents need not be rectifiable metric currents, see Example \ref{Cantor}. The proof, by approximation, uses a modification of Federer \& Fleming's Deformation Theorem, the random deformation theorem \ref{deformation}, \cite{contactfilling}, and a deformation  of chains of a triangulation to horizontal chains due to R. Young, \cite{YoungLowDim}.

\subsection{Structure of the paper}

Section \ref{preliminary} recalls properties of Rumin's complex. The definitions of Rumin currents, and of horizontal and oblique currents can be found in Section \ref{rumin and FF}. Section \ref{correspondences} establishes links between these various differential geometric notions. Section \ref{random} recalls the random deformation theorem. The comparison between metric and Federer-Fleming currents appears in Section \ref{AK versus FF}. This completes the proof of Theorem \ref{main}.

\section{Definitions and preliminary results}\label{preliminary}

For a general review on Heisenberg groups and their properties, we
refer to \cite{Stein}, \cite{GromovCC}, \cite{BLU}, and to \cite{VarSalCou}.
We limit ourselves to fix some notations.

\subsection{Heisenberg groups as Riemannian Lie groups}

We denote by $\he n$  the $2n+1$-dimensional Heisenberg
group, identified with $\rn {2n+1}$ through exponential
coordinates. A point $p\in \he n$ is denoted by
$p=(x,y,t)$, with both $x,y\in\rn{n}$
and $t\in\R$.
   If $p$ and $p'\in \he n$, the group operation is defined by

\begin{equation*}
p\cdot p'=(x+x', y+y', t+t' + \frac12 \sum_{j=1}^n(x_j y_{j}'- y_{j} x_{j}')).
\end{equation*}
The unit element of $\he n$ is the origin, that will be denoted by $e$.
Given $q\in\he n$, the \emph{left translation} $\tau_q:\he n\to\he n$ is defined
by $ p\mapsto\tau_q p:=q\cdot p$. 

The Lebesgue measure in $\mathbb R^{2n+1}$ 
is a Haar measure in $\he n$ (i.e., a bi-invariant measure on the group). 

We denote by $\mfrak h$ the Lie algebra of left
invariant vector fields of $\he n$. The standard basis of $\mfrak
h$ is given, for $i=1,\dots,n$,  by
\begin{equation*}
X_i := \partial_{x_i}-\frac12 y_i \partial_{t},\quad Y_i :=
\partial_{y_i}+\frac12 x_i \partial_{t},\quad Z :=
\partial_{t}.
\end{equation*}
The only non-trivial commutation  relations are $
[X_{i},Y_{i}] = Z $, for $i=1,\dots,n.$ 
The {\it horizontal subspace}  $\mfrak h_1$ is the subspace of
$\mfrak h$ spanned by $X_1,\dots,X_n$ and $Y_1,\dots,Y_n$:
${ \mfrak h_1:=\mathrm{span}\,\left\{X_1,\dots,X_n,Y_1,\dots,Y_n\right\}\,.}$

We refer to $X_1,\dots,X_n,Y_1,\dots,Y_n$
(identified with first order differential operators) as
the {\it horizontal derivatives}. Denoting  by $\mfrak h_2$ the linear span of $Z$, the $2$-step
stratification of $\mfrak h$ is expressed by
\begin{equation*}
\mfrak h=\mfrak h_1\oplus \mfrak h_2.
\end{equation*}

Deciding that the basis $X_1,\dots,X_n,Y_1,\dots,Y_n,Z$ is orthonormal defines a left-invariant Riemannian metric on $\he{n}$. This metric is used to define norms of differential forms. As for every Riemannian manifold, the Riemannian metric in each ball is equivalent to the Euclidean one. Therefore the theory of currents with bounded support in Euclidean space extends without effort to the Riemannian Heisenberg group.

\subsection{Heisenberg groups as nonRiemannian metric spaces}

The stratification of the Lie algebra $\mfrak h$ induces a family of anisotropic dilations 
$\mathfrak s_\lambda: \he n\to\he n$, $\lambda>0$ as follows: if
$p=(x,y,t)=(\bar p,p_{2n+1})\in \he n$, then
\begin{equation}\label{dilations}
\mathfrak s_\lambda (x,y,t) = (\lambda x, \lambda y, \lambda^2 t)= (\lambda \bar p, \lambda^2 p_{2n+1}).
\end{equation}

The Heisenberg group $\he n$ can be endowed with the \emph{Cygan-Kor\'anyi norm}
\begin{equation}\label{gauge}
\varrho (p)=\big(|\bar p|^4+ 16\, p_{2n+1}^2\big)^{1/4},
\end{equation}
and the corresponding left invariant distance 
$d(p,q):=\varrho ({p^{-1}\cdot q})$.

Alternatively, one can use the \emph{sub-Riemannian distance}, defined by minimizing the lengths of horizontal curves joining $p$ to $q$. Here, a piecewise $C^1$ curve $t\mapsto c(t)$ is \emph{horizontal} if for every $t$, the derivative $c'(t)$ left-translated to the origin belongs to the $(x,y)$ hyperplane. Its length is measured with respect to the left-invariant quadratic form which is equal to $|dx|^2+|dy|^2$ at the origin. We stress that the Cygan-Kor\'anyi distance is a true distance, see \cite{Stein}, p.\,638, which is equivalent to the sub-Riemannian distance.
 
The gauge norm \eqref{gauge} is $\mathfrak s_\lambda$-homogenous. So is the subRiemannian metric. It follows that the Lebesgue measure of the ball $B(x,r)$ is $r^{2n+2}$ up to a geometric constant
(the Lebesgue measure of $B(e,1)$). 
The constant 
$$
Q:=2n+2,
$$
is called the {\sl homogeneous dimension}  of $\he n$
with respect to $\mathfrak s_\lambda$, $\lambda>0$. It coincides with the Hausdorff dimension of $(\he n,d)$, which differs from its topological dimension, equal to $2n+1$.
    
\subsection{Differential forms}

The dual space of $\mfrak h$ is denoted by $\covH 1$.  The  basis of
$\covH 1$,  dual to  the basis $\{X_1,\dots , Y_n,Z\}$,  is the family of
covectors $\{dx_1,\dots, dx_{n},dy_1,\dots, dy_n$, $\theta\}$ where 
\begin{equation}\label{theta}
 \theta
:= dt - \frac12 \sum_{j=1}^n (x_jdy_j-y_jdx_j)
\end{equation}
 is called the {\it contact
form} in $\he n$. 
We denote by $\scalp{\cdot}{\cdot}{} $ the
inner product in $\covH 1$  that makes $(dx_1,\dots, dy_{n},\theta  )$ 
an orthonormal basis.

We set
\begin{equation*}
\omega_i:=dx_i, \quad \omega_{i+n}:= dy_i \quad { \mathrm{and} }\quad \omega_{2n+1}:= \theta, \quad \text
{for }i =1, \dots, n.
\end{equation*}

We put $\covH 0 =\R $
and, for $1\leq h \leq 2n+1$,
\begin{equation*}
\begin{split}
         \covH h& :=\mathrm {span}\{ \omega_{i_1}\wedge\dots \wedge \omega_{i_h}:
1\leq i_1< \dots< i_h\leq 2n+1\}.
\end{split}
\end{equation*}
The elements of $\covh{h}$ have similar expressions,
\begin{equation*}
\begin{split}
         \covh{h}& :=\mathrm {span}\{ \omega_{i_1}\wedge\dots \wedge \omega_{i_h}:
1\leq i_1< \dots< i_h\leq 2n\}.
\end{split}
\end{equation*}

Throughout this paper, the elements of $\cov h$ are identified with the left-invariant sections
of the vector bundle $\cov h$, i.e. with the \emph{left-invariant differential forms}
of degree $h$ on $\he n$. 

\begin{definition}\label{weight}
We say that a covector $\eta\neq 0$ has \emph{pure weight} $w$ if $\mathfrak{s}_{\lambda}^*\eta=\lambda^w \eta$.
\end{definition}

Then elements of $\covh 1$ have pure weight $1$ and $\theta$ has pure weight $2$. In degree $k$, covectors split orthogonally, 
\begin{align*}
\cov{k}=\covh{k}\oplus \theta\wedge\covh{k-1},
\end{align*}
where the first summand has pure weight $k$ and the second, $k+1$.

\begin{definition}
The \emph{horizontal $k$-covectors} are those which have pure weight $k$. The \emph{vertical $k$-covectors} are those which have pure weight $k+1$. 

A differential form is horizontal (resp. vertical) if all its left-translates, at the origin, are horizontal (resp. vertical).
\end{definition}

On left-invariant forms, the exterior differential preserves the weight. However, on nonleft-invariant differential forms, this is not the case anymore, this is the starting point of the construction of Rumin's complex.

\subsection{Rumin's complex in Heisenberg groups}\label{rumin heisenberg}

Let us give a short introduction to Rumin's complex, in the version . For a more detailed presentation we
refer to Rumin's papers \cite{rumin_cras},\cite{rumin_grenoble} and \cite{rumin_palermo}. Here we follow the presentation of \cite{BFTT}. Note that Rumin's initial version, \cite{rumin_jdg}, tailored for contact manifolds, was a bit different.

\subsubsection{Inverting the weight preserving differential}

The exterior differential $d$ does not preserve weights. It splits into
\begin{eqnarray*}
d=d_0+d_1+d_2
\end{eqnarray*}
where $d_0$ preserves weight, $d_1$ increases weight by 1 unit and $d_2$ increases weight by 2 units. 

It is crucial to notice that $d_0$ is an algebraic operator, in the sense that
for any real-valued $f\in\mc C^\infty (\he n)$ we have
$$
d_0(f\alpha)= f d_0\alpha,
$$
so that its action can be identified at any point with the action of a linear
operator from  $\cov h$ to $\cov {h+1}$ (that we denote again by $d_0$). 

We define now a (pseudo) inverse of $d_0$ as follows (see \cite{rumin_cras}, \cite{BFTT} Lemma 2.11): 

\begin{definition}\label{d_0}
If $\beta\in \cov {h+1}$, then there exists a unique $\alpha\in
\cov{h}\cap (\ker d_0)^\perp$ such that
$$
d_0\alpha-\beta\in \mc R(d_0)^\perp.
$$
We set $\alpha :=d_0^{-1}\beta$.
We notice that $d_0^{-1}$ preserves the weights. Since $d_0$ vanishes on horizontal forms, $d_0^{-1}$ takes its values in vertical forms.
\end{definition}

\subsubsection{The oblique subcomplex}

The next idea is to use $d_0^{-1}$ as a chain homotopy between the de Rham complex $(\Omega^\cdot,d)$ and a subcomplex. I.e., to set $\Pi:=Id-d_0^{-1}d-dd_0^{-1}$. It turns out that $\Pi$ is an idempotent, $\Pi\circ\Pi=\Pi$. Therefore its image $E:=\mc R(\Pi)$ is indeed a subcomplex. 
\begin{definition}
In the present paper, the subcomplex $E$ will be called the \emph{oblique subcomplex}.

To avoid confusions, one prefers to use the notation $\Pi_E$ instead of $\Pi$. 
\end{definition}
Since, by construction, $\Pi_E$ is chain homotopic to the identity, the cohomologies of $(E,d)$ and $(\Omega^\cdot,d)$ are isomorphic on all open subsets of $\he{n}$.

\subsubsection{The subbundle $E_0$}
The next task is to describe the oblique subcomplex as the space of smooth sections of a bundle. This will allow to equip it with norms, for instance. This is done in two steps. 

The first step is to view $\Pi_0:=Id-d_0^{-1}d-dd_0^{-1}$ as an approximation to $\Pi_E$. It is again an idempotent, and it is algebraic, hence its image is the space of smooth sections of a left-invariant subbundle $E_0$ of the vector bundle of differential forms. It is the left-invariant subbundle generated by a subspace of $\cov{\cdot}$, still denoted by $E_0$. As such, it inherits a Euclidean norm.
\begin{notation}
To avoid confusions, one prefers to use the notation $\Pi_{E_0}$ instead of $\Pi_0$. 
\end{notation}
By construction of $d_0^{-1}$, $\Pi_{E_0}$ is an orthogonal projector. Theorem \ref{main rumin new} expresses the fact that $\Pi_{E}:C^{\infty}(E_0)\to E$ and the restriction of $\Pi_{E_0}$ to $E$, from $E\to C^{\infty}(E_0)$, are bijections which are inverses of each other. This allows to view $(E,d)$ as an operator on $C^{\infty}(E_0)$.

\begin{definition}[M. Rumin, \cite{rumin_cras}]
The sections of the bundle $E_0$ are called \emph{Rumin forms}. The operator
 $$
 d_c:=\Pi_{E_0}\, d\,\Pi_{E}
 $$
on smooth Rumin forms is called \emph{the Rumin complex}.
\end{definition}

Its properties are summarized in the following

\begin{theorem}[\cite{rumin_cras}] \label{main rumin new}
The de Rham complex $(\Omega^\cdot,d)$ 
splits into the direct sum of two sub-complexes $(E^\cdot,d)$ and
$(F^\cdot,d)$, with
$$
E:=\ker d_0^{-1}\cap\ker (d_0^{-1}d)\quad\mbox{and}\quad
F:= \mc R(d_0^{-1})+\mc R (dd_0^{-1}).
$$
Let $\Pi_E$ be the projection on $E$ along $F$ (that
is not an orthogonal projection). We have

\begin{itemize}

\item[i)] $\Pi_{E}$ is a chain map, i.e. $d\Pi_{E} = \Pi_{E}d$.

\item[ii)]   If $\gamma\in C^{\infty}(E_0^{h})$,  then
\begin{itemize}
\item[$\bullet$] $
\Pi_E\gamma=\gamma -d_0^{-1}
d_1 \gamma$ if $1\le h\le n$ ;
\item[$\bullet$] $
\Pi_E\gamma=\gamma $ if $h>n$ .
\end{itemize}
In particular, $\Pi_E \gamma-\gamma$ is vertical.
\item[iii)] $\Pi_{E_0}$ preserves weights.

\item[iv)] $\Pi_{E_0}\Pi_{E}\Pi_{E_0}=\Pi_{E_0}$ and
$\Pi_{E}\Pi_{E_0}\Pi_{E}=\Pi_{E}$.

\item[v)] $d_c^2=0$.

\item[vi)] The complex $(C^{\infty}(E_0^\cdot),d_c)$ is locally exact.

\item[vii)] $d_c: C^{\infty}(E_0^h)\to C^{\infty}(E_0^{h+1})$ is a homogeneous differential operator in the 
horizontal derivatives
of order 1 if $h\neq n$, whereas $d_c: E_0^n\to E_0^{n+1}$ is a homogeneous differential operator in the 
horizontal derivatives
of order 2.
\end{itemize}
\end{theorem}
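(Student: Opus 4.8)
The statement is Rumin's theorem (\cite{rumin_cras}), so the plan is to recall the backbone of his argument, organising it around a handful of facts about $d_0$ and its pseudo-inverse $d_0^{-1}$. First I would set up the algebra. Splitting $d^2=0$ into weight-homogeneous parts gives $d_0^2=0$, so $d_0$ is a fibrewise algebraic differential and $d_0^{-1}$ its partial homotopy: $d_0 d_0^{-1}d_0=d_0$, $d_0^{-1}d_0 d_0^{-1}=d_0^{-1}$, $(d_0^{-1})^2=0$ (the last since $\mc R(d_0)\subseteq\ker d_0$), and $d_0 d_0^{-1}+d_0^{-1}d_0=Id-\Pi_{E_0}$, where $\Pi_{E_0}$ is the orthogonal projection onto $E_0=\ker d_0\cap\ker d_0^{-1}$; since $d_0$ and $d_0^{-1}$ preserve weights, so does $\Pi_{E_0}$, which is (iii). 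I would then isolate the two Heisenberg-specific inputs: (a) $\mc R(d_0)$ consists of horizontal forms — immediate from $d_0(\alpha_1+\theta\wedge\alpha_2)=d\theta\wedge\alpha_2$ with $d\theta$ horizontal — so that $d_0^{-1}$ annihilates every vertical form (it already takes values among them); and (b) the Lefschetz structure of $\Lambda^\bullet\mfrak h_1$ under $d\theta|_{\mfrak h_1}$, which identifies $E_0^h$ with the primitive horizontal $h$-covectors for $h\le n$ and shows $E_0^h$ consists of vertical covectors for $h>n$. Establishing (a) and (b) are, in my view, the only steps that are not pure linear algebra.

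The cornerstone is the identity $d_0^{-1}\,d\,d_0^{-1}=d_0^{-1}$: writing $d=d_0+d'$ with $d'=d_1+d_2$, one has $d_0^{-1}dd_0^{-1}=d_0^{-1}+d_0^{-1}d'd_0^{-1}$, and the correction vanishes by (a) because $d'$ sends vertical forms to vertical forms while $d_0^{-1}$ kills them. From this, $G:=d_0^{-1}d+dd_0^{-1}$ satisfies $G^2=d_0^{-1}dd_0^{-1}d+dd_0^{-1}dd_0^{-1}=d_0^{-1}d+dd_0^{-1}=G$ (using $d^2=(d_0^{-1})^2=0$), so $\Pi_E:=Id-G$ is a projector; checking what $G$ kills and fixes identifies $\ker\Pi_E=E=\ker d_0^{-1}\cap\ker(d_0^{-1}d)$ and a complement $F=\mc R(d_0^{-1})+\mc R(dd_0^{-1})$, whence the splitting. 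Then (i) is one line: $d\Pi_E=d-dd_0^{-1}d=\Pi_E d$. For (ii), $\gamma\in C^{\infty}(E_0^h)$ lies in $\ker d_0\cap\ker d_0^{-1}$, so $\Pi_E\gamma=\gamma-d_0^{-1}d_1\gamma-d_0^{-1}d_2\gamma=\gamma-d_0^{-1}d_1\gamma$ (since $d_2\gamma$ is vertical), a vertical perturbation of $\gamma$; and when $h>n$, (b) makes $\gamma$, hence $d_1\gamma$, vertical, so $d_0^{-1}d_1\gamma=0$ and $\Pi_E\gamma=\gamma$. Identity (iv) follows from the orthogonalities $\mc R(d_0^{-1})\perp E_0$, $\mc R(d_0)\perp E_0$ together with idempotency: $\Pi_{E_0}\Pi_E\Pi_{E_0}=\Pi_{E_0}$ because $\Pi_{E_0}d_0^{-1}=0$, and $\Pi_E\Pi_{E_0}\Pi_E=\Pi_E$ reduces, for $\eta\in E$, to $d_0^{-1}d'(d_0^{-1}d'\eta)=0$, which is once more $d_0^{-1}d'd_0^{-1}=0$.

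The remaining items are formal. By (iv) the maps $\Pi_{E_0}|_E$ and $\Pi_E|_{C^\infty(E_0)}$ are mutually inverse, so $d_c=\Pi_{E_0}\,d\,\Pi_E$ is just $d|_E$ transported to $C^\infty(E_0)$; hence $d_c^2=0$ from $(d|_E)^2=0$, which is (v). For (vi), $Id-\Pi_E=d\,d_0^{-1}+d_0^{-1}\,d$ exhibits $d_0^{-1}$ as a chain homotopy between the identity and $\Pi_E$, so $(E,d)$ is a deformation retract of the de Rham complex on every open set and is locally exact by the Poincar\'e lemma; the same then holds for $(C^\infty(E_0),d_c)$. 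For (vii): in $d_c\gamma=\Pi_{E_0}d\Pi_E\gamma$ the projector $\Pi_{E_0}$ selects the weight of $E_0^{h+1}$, and a weight count kills every contribution but $\Pi_{E_0}d_1\gamma$ — first order in the horizontal derivatives — except when $h=n$, where the surviving term comes from $d_2\gamma$ (together with $d_1 d_0^{-1}d_1\gamma$) and so involves a $Z=[X_i,Y_i]$ derivative, i.e. is second order in the horizontal derivatives.

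I expect the step needing the most care to be the idempotency of $\Pi_E$ — concretely the identity $d_0^{-1}dd_0^{-1}=d_0^{-1}$ and the verification that $\ker\Pi_E$ and $\mc R(\Pi_E)$ are exactly $E$ and the complement of $F$ — since everything afterwards is either a weight bookkeeping ((ii), (vii)) or diagram-chasing ((i), (iv)--(vi)). The underlying structural facts (a) and (b) are standard and are carried out in \cite{rumin_cras} and \cite{BFTT}.
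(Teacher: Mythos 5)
The paper does not prove this theorem: it is imported verbatim from \cite{rumin_cras} (in the presentation of \cite{BFTT}), so there is no internal proof to compare against. Your sketch is a correct reconstruction of the standard argument of those sources — the algebra of the pseudo-inverse $d_0^{-1}$, the key identity $d_0^{-1}dd_0^{-1}=d_0^{-1}$ (via $d_0^{-1}$ taking values in, and killing, vertical forms), idempotency of $\Pi_E=Id-d_0^{-1}d-dd_0^{-1}$, and the weight bookkeeping for (ii), (iv) and (vii) — and all the steps check out. One notational slip to fix: you write ``$\ker\Pi_E=E$'' where you mean $\ker(d_0^{-1}d+dd_0^{-1})=E$, i.e.\ $\mc R(\Pi_E)=E$ and $\ker\Pi_E=F$, which is the convention the rest of your argument correctly uses.
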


\subsubsection{Description of $E_0$} \label{E0E}

The second step consists in computing $E_0$ and $\Pi_{E_0}$, and find a handy characterization of $E$.

We need notation from symplectic linear algebra. 
\begin{notation}
Let $\mathfrak{h}_1=\R^{2n}$ be equipped with a symplectic form denoted by $d\theta$ and a compatible Euclidean structure. Let $L:\covh{\cdot}\to\covh{\cdot}$ denote the operator of wedge multiplication with $d\theta$. Let $\Lambda$ denote the adjoint of $L$.   
\end{notation}

\begin{proposition}[Formulae (58) and (61) in \cite{rumin_palermo}] \label{E0h}
If $0\le h\le n$, then
\begin{align*}
E_0^h&=\covh{h}\cap(\mc R(L))^\perp\\
&=\covh{h}\cap\ker(L^{n-k+1}).
\end{align*}
Therefore, if covectors are split as
\begin{align*}
\cov{h}=\theta\wedge\covh{h-1}\oplus (\covh{h}\cap \mc R(L))\oplus (\covh{h}\cap\ker(L^{n-k+1})),
\end{align*}
$\Pi_{E_0}$ is the projector onto the third summand. In particular, $\Pi_{E_0}$ kills every multiple of $\theta$ or of $d\theta$.

If $n+1\le h\le 2n+1$, then
\begin{align*}
E_0^h&=\theta\wedge(\covh{h-1}\cap\ker(L)).
\end{align*}
Therefore, if covectors are split as
\begin{align*}
\cov{h}=\covh{h}\oplus\theta\wedge\mc R(L^{k-n+1})\oplus\theta\wedge\ker(L),
\end{align*}
$\Pi_{E_0}$ is the projector onto the third summand. 
\end{proposition}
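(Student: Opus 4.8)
The plan is to compute $E_0^h = \mc R(\Pi_{E_0})$ directly from the definition $\Pi_{E_0} = Id - d_0^{-1}d - dd_0^{-1}$, but since $d_0^{-1}$ is algebraic, it suffices to work fiberwise with the linear operator $d_0$ on $\cov{\cdot}$. First I would identify $d_0$ explicitly: on the stratified Lie algebra $\mfrak h = \mfrak h_1\oplus\mfrak h_2$, the weight-preserving part of the exterior differential on left-invariant forms is the Chevalley-Eilenberg differential of the quotient step (contracting with $-\tfrac12 d\theta$ essentially), so $d_0$ is, up to sign and the factor $\theta$, the operator $L$ of wedging with $d\theta$. Concretely, writing a covector as $\alpha = \alpha' + \theta\wedge\alpha''$ with $\alpha',\alpha''\in\covh{\cdot}$, one checks $d_0(\alpha'+\theta\wedge\alpha'') = -\theta\wedge L\alpha'$ (constants aside). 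Hence $\ker d_0 = \{\alpha'+\theta\wedge\alpha'' : L\alpha' = 0\}$ and $\mc R(d_0) = \theta\wedge\mc R(L)$.

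Next I would compute $d_0^{-1}$ from Definition \ref{d_0}: given $\beta\in\cov{h+1}$, $d_0^{-1}\beta$ is the unique element of $(\ker d_0)^\perp\subset\cov{h}$ whose image under $d_0$ agrees with $\beta$ modulo $\mc R(d_0)^\perp$. From the description above, $(\ker d_0)^\perp$ sits inside $\covh{h}$ (no $\theta$ component, and $L$-injective there), and $d_0^{-1}(\theta\wedge\gamma)$ is obtained by inverting $L$ on the appropriate piece of $\gamma$. Then $\Pi_{E_0} = Id - d_0^{-1}d_0 - d_0 d_0^{-1}$ restricted to left-invariant forms (the $d_1,d_2$ parts drop out fiberwise since $d_0^{-1}$ only sees $d_0$ via the algebraic identity). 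The term $d_0^{-1}d_0$ is the orthogonal projector onto $(\ker d_0)^\perp$, i.e. onto the $L$-injective part of $\covh{h}$; the term $d_0 d_0^{-1}$ is the orthogonal projector onto $\mc R(d_0) = \theta\wedge\mc R(L)$. Subtracting both from the identity and using the orthogonal splitting
\begin{align*}
\cov{h} = \big(\covh{h}\cap\ker(L^{n-h+1})\big)\oplus\big(\covh{h}\cap\mc R(L)\big)\oplus\theta\wedge\covh{h-1}
\end{align*}
valid for $h\le n$ (the first two summands are the Lefschetz decomposition of primitive vs. $L$-exact forms, and $L$ is injective on primitives in this degree range), one reads off that $\Pi_{E_0}$ projects onto the primitive summand $\covh{h}\cap\ker(L^{n-h+1}) = \covh{h}\cap(\mc R(L))^\perp$. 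For $h\ge n+1$, I would apply the Hodge-star (or the duality $L\leftrightarrow\Lambda$) to transport the computation: here every $h$-covector with $h>n$ necessarily contains $\theta$, write $\alpha = \theta\wedge\gamma$ with $\gamma\in\covh{h-1}$, $h-1\ge n$, and the analogous analysis of $d_0$ and its pseudoinverse gives that $\Pi_{E_0}$ kills $\covh{h}$ and $\theta\wedge\mc R(L^{h-n-1}\cdot)$, landing on $\theta\wedge\ker(L)$.

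The main obstacle I anticipate is getting the symplectic linear algebra bookkeeping exactly right: verifying that $d_0$ really is $\pm L$ composed with wedging by $\theta$ (including the correct scalar), and matching the three-way splittings above with the Lefschetz decomposition — in particular checking that $\covh{h}\cap(\mc R(L))^\perp$ coincides with $\covh{h}\cap\ker(L^{n-h+1})$ precisely in the range $0\le h\le n$, which is where $L^{n-h}:\covh{h}\to\covh{2n-h}$ is an isomorphism (hard Lefschetz) so that a form is primitive iff it is $\mc R(L)$-orthogonal. Since these are exactly formulae (58) and (61) of \cite{rumin_palermo}, I would cite that computation for the scalar normalizations and present here the conceptual derivation, flagging that the only genuine input beyond linear algebra is the identification of $d_0$ with the symplectic Lefschetz operator, which follows from the commutation relation $[X_i,Y_i]=Z$ defining $d\theta$.
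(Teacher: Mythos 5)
Your overall plan (compute $\Pi_{E_0}=Id-d_0^{-1}d_0-d_0d_0^{-1}$ fiberwise, then use the Lefschetz identity $(\mc R(L))^\perp=\ker\Lambda=\ker(L^{n-h+1})$ to recover the statement) is sound, and since the paper itself gives no proof but only cites formulae (58) and (61) of Rumin, a self-contained derivation is welcome. However, your identification of $d_0$ is wrong, and the error propagates through every later step. On left-invariant forms the horizontal generators $dx_i,dy_i$ are closed and $d\theta=-\sum_i dx_i\wedge dy_i$, so the weight-preserving part of $d$ acts by
\begin{align*}
d_0(\alpha'+\theta\wedge\alpha'')=d\theta\wedge\alpha''=L\alpha''\qquad(\alpha',\alpha''\in\covh{\cdot}),
\end{align*}
that is, $d_0$ \emph{kills the horizontal part and sends the vertical part to a horizontal form}. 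Your formula $d_0(\alpha'+\theta\wedge\alpha'')=-\theta\wedge L\alpha'$ goes in the opposite direction (and is not even of the right degree: $\theta\wedge L\alpha'$ has degree $h+3$, not $h+1$). Consequently your assertions that $\ker d_0=\{L\alpha'=0\}$, that $\mc R(d_0)=\theta\wedge\mc R(L)$, and that ``$(\ker d_0)^\perp$ sits inside $\covh{h}$'' are all false; they also contradict the remark in Definition \ref{d_0} that $d_0$ vanishes on horizontal forms, so that $d_0^{-1}$ takes its values in \emph{vertical} forms. The correct data are $\ker d_0=\covh{h}\oplus\theta\wedge(\covh{h-1}\cap\ker L)$, $\mc R(d_0)=\covh{h}\cap\mc R(L)$, $(\ker d_0)^\perp=\theta\wedge\big(\covh{h-1}\cap(\ker L)^\perp\big)$; hence $d_0^{-1}d_0$ projects onto this vertical subspace and $d_0d_0^{-1}$ onto $\covh{h}\cap\mc R(L)$.

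This is not a cosmetic slip. With the kernel and range you assert, $Id-d_0^{-1}d_0-d_0d_0^{-1}$ would be the orthogonal projection onto $(\covh{h}\cap\ker L)\oplus\theta\wedge\big(\covh{h-1}\cap\mc R(L)^\perp\big)$, which for $h\le n$ is (essentially) $\theta\wedge(\text{primitive }(h-1)\text{-covectors})$ --- a vertical space, not the horizontal primitive space $\covh{h}\cap\ker(L^{n-h+1})$ of the proposition; your stated conclusion is therefore asserted rather than derived. With the corrected $d_0$ the argument does close, and in a uniform way: $\Pi_{E_0}$ is the orthogonal projection onto $\big(\covh{h}\cap\mc R(L)^\perp\big)\oplus\theta\wedge\big(\covh{h-1}\cap\ker L\big)$ in every degree, and the $\mathfrak{sl}_2$/hard Lefschetz facts recorded in Lemma \ref{Hodge} kill the second summand when $h\le n$ (since $L$ is injective on $\covh{h-1}$ for $h-1\le n-1$) and the first when $h\ge n+1$ (since then $L:\covh{h-2}\to\covh{h}$ is onto, so $\covh{h}\cap\mc R(L)^\perp=0$), which is exactly the two-case statement, including the identification of $\Pi_{E_0}$ with the projector onto the third summand of each splitting. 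Finally, beware the sentence ``every $h$-covector with $h>n$ necessarily contains $\theta$'': horizontal covectors exist in all degrees up to $2n$; what is true, and what you need, is that $\covh{h}=\mc R(L)$ for $h\ge n+1$, so the horizontal summand of $E_0^h$ vanishes there.
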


\subsubsection{Description of the oblique complex}

\begin{proposition}[Formula (62) in \cite{rumin_palermo}]\label{RuminE}

For each $0\le h\le 2n+1$, the subspace $E^h$ is defined in the space of smooth $h$-forms by the following equations.

If $0\le h\le n$, 
\begin{align*}
E^h&=\{\alpha\,;\,\Lambda(\alpha_H)=0,\,\Lambda((d\alpha)_H)=0\}\\
&=\{\alpha\,;\,\theta\wedge (d\theta)^{n-h+1}\wedge\alpha=0,\,\theta\wedge (d\theta)^{n-h}\wedge d\alpha=0\},
\end{align*}
(where $\alpha_H$ denotes the restriction of $\alpha$ to the contact hyperplane).

If $n+1\le h\le 2n+1$, 
\begin{align*}
E^h&=\{\alpha \text{ vertical and }d\alpha \text{ vertical}\}\\
&=\{\alpha=\theta\wedge\beta\,;\,d\theta\wedge\beta=0\}.
\end{align*}
\end{proposition}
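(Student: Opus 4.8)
The approach is to read off the defining equations from the characterization $E=\ker d_0^{-1}\cap\ker(d_0^{-1}d)$ supplied by Theorem \ref{main rumin new}, after making the algebraic operator $d_0^{-1}$ explicit. Using the orthogonal splitting $\cov{h}=\covh{h}\oplus\theta\wedge\covh{h-1}$, write a form as $\alpha=\alpha_H+\theta\wedge\alpha_1$ with $\alpha_H\in\covh{h}$, $\alpha_1\in\covh{h-1}$; here $\alpha_H$ is exactly the restriction of $\alpha$ to the contact hyperplane. Since $d_0$ is algebraic and agrees with $d$ on left-invariant forms, since $\theta$ has pure weight $2$, and since $d\omega_I=0$ for the left-invariant horizontal basis forms, one gets $d_0\alpha=d\theta\wedge\alpha_1=L\alpha_1$ (the horizontal part $\alpha_H$ being killed by $d_0$). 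Consequently $\mc{R}(d_0)$, viewed in degree $h$, equals $L(\covh{h-2})\subset\covh{h}$, and its orthogonal complement in $\cov{h}$ is $\big(\covh{h}\cap\ker\Lambda\big)\oplus\theta\wedge\covh{h-1}$ (the $\theta$-summand being automatically orthogonal to horizontal covectors, and $(\mc{R}(L))^\perp=\ker\Lambda$ since $\Lambda=L^{*}$).

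By Definition \ref{d_0}, $d_0^{-1}\beta$ is the unique element of $(\ker d_0)^\perp$ that $d_0$ sends to the orthogonal projection of $\beta$ onto $\mc{R}(d_0)$; as $d_0$ is injective on $(\ker d_0)^\perp$, this means $d_0^{-1}\beta=0$ iff $\beta\perp\mc{R}(d_0)$, i.e. iff $\Lambda(\beta_H)=0$. Applying this both to $\beta=\alpha$ and to $\beta=d\alpha$ (a form of degree $h+1$), the identity $E=\ker d_0^{-1}\cap\ker(d_0^{-1}d)$ of Theorem \ref{main rumin new} becomes, for every $h$,
$$E^h=\{\alpha\ :\ \Lambda(\alpha_H)=0,\ \Lambda((d\alpha)_H)=0\},$$
which is the first displayed characterization. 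Everything that remains is a reformulation of the two conditions $\Lambda(\gamma_H)=0$ with $\gamma$ of degree $j\in\{h,h+1\}$.

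For this I would use the standard $\mathfrak{sl}_2$/Lefschetz linear algebra on $\covh{\cdot}$ attached to the symplectic form $d\theta$: if $j\le n$, a horizontal $j$-covector $\eta$ satisfies $\Lambda\eta=0$ (i.e. $\eta$ is primitive) if and only if $L^{\,n-j+1}\eta=0$ — the nontrivial implication coming from the primitive (Lefschetz) decomposition of $\covh{\cdot}$, not from a naive count of degrees — whereas if $j>n$ the operator $\Lambda$ is injective on $\covh{j}$ (its adjoint $L\colon\covh{j-2}\to\covh{j}$ being surjective), so $\Lambda\eta=0\iff\eta=0$. Since wedging with $\theta$ is injective on horizontal forms and $\theta\wedge(d\theta)^m\wedge\alpha=\theta\wedge L^{m}(\alpha_H)$ (all terms carrying $\theta\wedge\theta$ vanish), for $0\le h\le n$ the condition $\Lambda(\alpha_H)=0$ is equivalent to $\theta\wedge (d\theta)^{n-h+1}\wedge\alpha=0$, and $\Lambda((d\alpha)_H)=0$ to $\theta\wedge (d\theta)^{n-h}\wedge d\alpha=0$ (with the convention $(d\theta)^0=\mathrm{id}$, which handles the borderline degree $h=n$, where $(d\alpha)_H$ has degree $n+1$ and the condition reads $\theta\wedge d\alpha=0$). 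For $n+1\le h\le 2n+1$, the degrees of $\alpha_H$ and of $(d\alpha)_H$ exceed $n$, so the two conditions become $\alpha_H=0$ and $(d\alpha)_H=0$, that is, ``$\alpha$ vertical and $d\alpha$ vertical''.

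Finally, in the range $n+1\le h\le 2n+1$ it remains to identify $\{\alpha\text{ vertical},\ d\alpha\text{ vertical}\}$ with $\{\alpha=\theta\wedge\beta\ :\ d\theta\wedge\beta=0\}$. Writing a vertical $\alpha$ as $\alpha=\theta\wedge\beta$ with $\beta$ horizontal of degree $h-1$ (unique with this normalization) and using $d(\theta\wedge\beta)=d\theta\wedge\beta-\theta\wedge d\beta$, the horizontal component of $d\alpha$ is $d\theta\wedge\beta=L\beta$, while $\theta\wedge d\beta$ is a multiple of $\theta$, hence vertical; so $d\alpha$ is vertical precisely when $d\theta\wedge\beta=0$. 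Alternatively, for $h>n$ this last description follows directly from Theorem \ref{main rumin new}(ii), which gives $E^h=C^\infty(E_0^h)$, combined with the formula $E_0^h=\theta\wedge(\covh{h-1}\cap\ker L)$ of Proposition \ref{E0h}. The only genuine difficulty in all of this is the bookkeeping: pinning down $\mc{R}(d_0)$ and its orthogonal complement degree by degree, and — above all — getting the exponents of $d\theta$ and the degree ranges exactly right in the translation step, including the borderline degrees $h=n,\ n+1$ and the correct use of the primitive decomposition to establish $\Lambda\eta=0\Leftrightarrow L^{\,n-j+1}\eta=0$.
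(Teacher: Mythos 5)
Your proof is correct, and in fact it fills a gap the paper deliberately leaves open: Proposition \ref{RuminE} is not proved in the paper at all, it is quoted as Formula (62) from Rumin's Palermo notes. Your derivation is a faithful reconstruction of that computation using only ingredients the paper itself supplies: the identification of $d_0$ on $\alpha=\alpha_H+\theta\wedge\alpha_1$ with $L\alpha_1$ (consistent with the paper's remark that $d_0$ vanishes on horizontal forms), Definition \ref{d_0} to show $d_0^{-1}\beta=0\iff\beta\perp\mc R(d_0)\iff\Lambda(\beta_H)=0$, the characterization $E=\ker d_0^{-1}\cap\ker(d_0^{-1}d)$ from Theorem \ref{main rumin new}, and the Lefschetz facts $\ker\Lambda=\ker(L^{n-j+1})$ on $\covh{j}$ for $j\le n$ and injectivity of $\Lambda$ on $\covh{j}$ for $j>n$, the first of which is exactly Lemma \ref{Hodge}(2) (Weil), so you could simply cite it rather than re-invoke the primitive decomposition. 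Your handling of the borderline degrees $h=n$ (where the second condition degenerates to $\theta\wedge d\alpha=0$) and $h\ge n+1$ (verticality, then $(d\alpha)_H=L\beta$ for $\alpha=\theta\wedge\beta$) is right. One small caveat: your alternative route for $h>n$ via Theorem \ref{main rumin new}(ii) gives directly only the inclusion $C^\infty(E_0^h)\subset E^h$; to get equality $E^h=C^\infty(E_0^h)$ you also need the fact, stated in the paper just before the definition of $d_c$, that $\Pi_{E_0}|_E$ and $\Pi_E|_{C^\infty(E_0)}$ are mutually inverse bijections. Since that route is only offered as an aside, the main argument is complete as written.
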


In both Propositions \ref{E0h} and \ref{RuminE}, the occurrence of powers of $L$ stems from the following

\begin{lemma} \label{Hodge}

\begin{enumerate}
  \item Let $\star$ denote the Hodge operator on $\covh{\cdot}$. Then $\star\Lambda=L\star$. 
  \item In $\covh{k}$, $\ker(\Lambda)=\ker(L^{n-k+1})$.
  \item In $\covh{2n-k}$, $\mc R(\Lambda)=\mc R(L^{n-k+1})$.
  \item Let $\alpha\in\cov{h}$, $\alpha'\in\cov{2n+1-h}$ be covectors of complementary degrees. Then
\begin{align*}
(d_0^{-1}\alpha)\wedge\alpha'=(-1)^{h}\alpha\wedge d_0^{-1}\alpha'.
\end{align*}
\end{enumerate}
\end{lemma}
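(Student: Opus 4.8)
The plan is to prove the four items of Lemma \ref{Hodge} by direct computations in symplectic linear algebra on $\covh{\cdot}=\bigwedge\nolimits^\cdot\R^{2n}$, together with one formal identity about $d_0^{-1}$. Throughout, recall that the Euclidean structure on $\mathfrak h_1$ is compatible with the symplectic form $d\theta$, so that in a suitable orthonormal basis $d\theta=\sum_{i=1}^n dx_i\wedge dy_i$; this is the setting of the classical Lefschetz $\mathfrak{sl}_2$-representation on $\covh{\cdot}$, generated by $L=d\theta\wedge\cdot$, its adjoint $\Lambda$, and the grading operator.

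For item (1), I would argue that $\star$ intertwines $L$ with its adjoint up to the $\mathfrak{sl}_2$-action: since $\Lambda=L^*$ and $\star$ is (up to sign on each graded piece) an isometric involution sending $\covh k$ to $\covh{2n-k}$, one has for $\alpha\in\covh k$, $\beta\in\covh{2n-k-2}$ the chain of equalities $\langle \star\Lambda\alpha,\beta\rangle=\langle\Lambda\alpha,\star^{-1}\beta\rangle=\langle\alpha,L\star^{-1}\beta\rangle=\langle\alpha,\star^{-1}L\beta\rangle=\langle\star\alpha,L\beta\rangle=\langle L\star\alpha,\beta\rangle$, using that $\star$ commutes with $L$ up to sign on fixed degree and that signs match on adjacent degrees; hence $\star\Lambda=L\star$. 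The one point to check carefully is the sign bookkeeping of $\star$ on the relevant graded pieces, and that $\star L\star^{-1}=\Lambda$ — essentially the statement that $L$ and $\Lambda$ are swapped by the Hodge star, which is a standard fact but deserves a line.

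Items (2) and (3) are then immediate consequences of the representation theory of $\mathfrak{sl}_2$. A covector $\alpha\in\covh k$ with $k\le n$ lies in $\ker\Lambda$ iff it is a lowest-weight (primitive) vector; by the Lefschetz decomposition and the structure of finite-dimensional $\mathfrak{sl}_2$-modules, the primitive vectors in degree $k$ are exactly those killed by $L^{n-k+1}$ (the length of the Lefschetz string through degree $k$ is $n-k+1$ steps before reaching the ``top''), giving (2). For (3), apply (1): in $\covh{2n-k}$ one has $\mc R(\Lambda)=\star\,\mc R(L)\,\text{(in $\covh{k}$ side)}$; more directly, $\mc R(\Lambda)$ in degree $2n-k$ is the orthogonal complement of $\ker L$ in that degree, and $\ker(L)$ in $\covh{2n-k}$ corresponds under $\star$ to $\ker(\Lambda)$ in $\covh{k}$, which by (2) is $\ker(L^{n-k+1})$; dualizing (taking orthogonal complements and using $\star$) yields $\mc R(\Lambda)=\mc R(L^{n-k+1})$ in $\covh{2n-k}$. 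Alternatively one derives (3) from (2) purely by the $\mathfrak{sl}_2$-string description, counting which weight spaces are hit by $L^{n-k+1}$.

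Item (4) is of a different nature and is where I expect the only real subtlety. The claim is that $d_0^{-1}$ is, up to the sign $(-1)^h$, formally self-adjoint with respect to the wedge-and-integrate (or wedge-to-top-form) pairing between complementary degrees. I would unwind Definition \ref{d_0}: $d_0:\cov h\to\cov{h+1}$ is the algebraic part of $d$, explicitly (on left-invariant forms) $d_0\alpha=\theta\wedge(\text{something})$ built from interior/exterior multiplication by $\theta$ and $d\theta$; and $d_0^{-1}$ is its partial inverse determined by the orthogonality conditions $\alpha\in(\ker d_0)^\perp$ and $d_0\alpha-\beta\in\mc R(d_0)^\perp$. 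The key structural fact is that $d_0$ is, up to sign, the adjoint (with respect to the top-form wedge pairing, which is the Euclidean pairing composed with $\star$) of the operator $\alpha\mapsto$ the corresponding ``lowering'' piece — concretely $d_0$ and its transpose under $\langle\cdot,\cdot\rangle$ differ by a sign depending on the degree, because $d_0$ raises degree by $1$ and involves one factor of $\theta$. From this, the Moore–Penrose pseudoinverse $d_0^{-1}$ inherits the same transposition law with the same sign, and translating the Euclidean adjoint into the wedge pairing $\langle\gamma,\gamma'\rangle\,\mathrm{vol}=\gamma\wedge\star\gamma'$ produces the factor $(-1)^h$ upon moving $\star$ across. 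The careful point is to verify the precise sign: one fixes $\alpha\in\cov h$, $\alpha'\in\cov{2n+1-h}$, writes $\alpha=\alpha_0+\alpha_1$ with $\alpha_0\in\ker d_0$, $\alpha_1\in(\ker d_0)^\perp$, uses that $(d_0^{-1}\alpha)\wedge\alpha'$ only sees the component of $\alpha'$ in $\mc R(d_0)$ (since $d_0^{-1}\alpha\in(\ker d_0)^\perp=\mc R(d_0^*)$ and the wedge pairing is compatible with the Hodge star), and reduces to the identity $\langle d_0^{-1}\alpha,\xi\rangle=\langle\alpha,(d_0^{-1})^*\xi\rangle$ combined with $(d_0^{-1})^*=(-1)^{?}\,d_0^{-1}$ on the appropriate graded pieces — then chasing the sign through $\star$ gives exactly $(-1)^h$. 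The main obstacle, then, is purely this sign computation in item (4); items (1)–(3) are standard Lefschetz theory and should be dispatched quickly.
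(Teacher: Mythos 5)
Items (2) and (3) of your outline are fine and essentially match the paper (the paper quotes Weil's Corollaire for (2) and obtains (3) by passing to orthogonal complements and conjugating by $\star$, which is your second route). The problems are in (1) and (4). For (1), your displayed chain does not type-check and is circular: the step $\langle\alpha,L\star^{-1}\beta\rangle=\langle\alpha,\star^{-1}L\beta\rangle$ requires $L\star^{-1}=\star^{-1}L$, which is false — $L\star^{-1}$ and $\star^{-1}L$ do not even land in the same degree (they send $\covh{j}$ to $\covh{2n-j+2}$ and to $\covh{2n-j-2}$ respectively), and the correct replacement, $\star L\star^{-1}=\Lambda$ up to sign, is exactly the statement of item (1), which you then defer to as ``a standard fact''. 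So nothing is actually proved. What is needed (and what the paper supplies) is the short direct computation based on $u\wedge\star v=\langle u,v\rangle\,\star 1$ together with the fact that the even-degree form $d\theta$ can be moved across wedge products: $\star\Lambda(\alpha)\wedge\beta=(\Lambda\alpha\cdot\beta)\star 1=(\alpha\cdot L\beta)\star 1=(\star\alpha)\wedge d\theta\wedge\beta=L(\star\alpha)\wedge\beta$ for all test covectors $\beta$.

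For (4) the gap is more substantial. First, the framing is off: with $\alpha\in\cov{h}$ and $\alpha'\in\cov{2n+1-h}$, both $(d_0^{-1}\alpha)\wedge\alpha'$ and $\alpha\wedge d_0^{-1}\alpha'$ are $2n$-covectors, not top forms, so item (4) is not an adjointness statement for the wedge-to-top-form pairing. An abstract Moore--Penrose transposition argument (conjugating the Euclidean adjoint by the full Hodge star, using that $d_0$ is graded skew-adjoint for the top pairing) would at best yield $(d_0^{-1}u)\wedge v=\pm\,u\wedge d_0^{-1}v$ for $\deg u+\deg v=2n+2$, which is a different identity and does not formally imply (4), since $d_0^{-1}$ is not a derivation. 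Second, the crucial claim that the pseudoinverse, defined through the Euclidean complements $(\ker d_0)^\perp$ and $\mc R(d_0)^\perp$, ``inherits the same transposition law'' is asserted, not proved; it is precisely here that the compatibility of $\star$ with those subspaces must be checked, and that is where the real content lies. The paper instead argues concretely: writing $\alpha=\alpha_V+L\beta+\gamma$ with $\beta\in\ker(L)^\perp$, $\gamma\in\mc R(L)^\perp$, one has $d_0^{-1}\alpha=\theta\wedge\beta$, and the identity reduces to the vanishing of the cross terms $\beta\wedge\gamma'$ and $\beta'\wedge\gamma$, which is exactly where items (2) and (3) are used ($\beta\in\mc R(\Lambda)=\mc R(L^{n-h})$, $\gamma'\in\ker\Lambda=\ker(L^{n-h})$). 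Your outline never produces this decomposition and never invokes (2)--(3), so the mechanism that makes (4) true — and the reason it is bundled with (1)--(3) — is missing.
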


\begin{proof}
1. For covectors $\alpha$ and $\beta$ on $\mathfrak{h}_1$ of degrees $\ell,\ell'$, $\ell+\ell'=2n-2$, the equation
\begin{align*}
\star \Lambda(\alpha)\wedge\beta&=(\Lambda(\alpha)\cdot\beta)\star 1=(\alpha\cdot L\beta)\star 1=(\star\alpha)\wedge L\beta\\
&=(\star\alpha)\wedge d\theta\wedge\beta=L(\star\alpha)\wedge\beta
\end{align*}
implies that $\star \Lambda=L\star$. Therefore $\Lambda$ coincides with A. Weil's operator $\Lambda$ (\cite{weil}, paragraphe 4, page 21).
 
2. The Corollaire on page 28 of \cite{weil} states in particular that $\ker(\Lambda)=\ker(L^{n-k+1})$ in $\covh{k}$. 

3. Passing to orthogonal subspaces yields $\mc R(L)=\mc R(\Lambda^{n-k+1})$ in $\covh{k}$. Hence
\begin{align*}
\mc R(L^{n-k+1})&=\mc R(\star\Lambda^{n-k+1}\star^{-1})=\star\mc R(\Lambda^{n-k+1})\\
&=\star\mc R (L)=\mc R(\star^{-1}L\star)=\mc R(\Lambda)
\end{align*}
in $\covh{2n-k}$, since $\star\circ\star=\pm Id$.

4. For $\alpha\in\cov{h}$, let
\begin{align*}
\alpha=\alpha_V +L\beta +\gamma
\end{align*}
where $\alpha_V$ is vertical, $\beta$ and $\gamma$ are horizontal, and $\beta\in\ker(L)^\bot$, $\gamma\in\mc R(L)^\bot$. Then
\begin{align*}
d_0^{-1}\alpha=\theta\wedge\beta.
\end{align*}
Idem, let $\alpha=\alpha'_V +L\beta' +\gamma'$. Since $\beta\in\covh{h-1}\cap\ker(L)^\bot=\mc R(\Lambda)=\mc R(L^{n-h})$, there exists $\beta''$ such that $\beta=L^{n-h}\beta''$. 
Also, $\gamma'\in\covh{2n-(h-1)}\cap\mc R(L)^\bot=\ker\Lambda=\ker(L^{n-h})$, so
\begin{align*}
\beta\wedge\gamma'=(d\theta)^{n-h}\wedge\beta''\wedge\gamma'=\beta''\wedge(d\theta)^{n-h}\wedge\gamma'=\beta''\wedge L^{n-h}\gamma'=0.
\end{align*}
Idem, $\beta'\wedge\gamma=0$. It follows that
\begin{align*}
d_0^{-1}\alpha\wedge\alpha'
&=\theta\wedge\beta\wedge(L\beta'+\gamma')
=\theta\wedge \beta\wedge d\theta\wedge\beta'\\
&=\theta\wedge d\theta\wedge\beta\wedge\beta'
=\theta\wedge (L\beta+\gamma)\wedge\beta'\\
&=(-1)^{h}\alpha\wedge\theta\wedge \beta'
=(-1)^{h}\alpha\wedge d_0^{-1}\alpha'.
\end{align*}

\end{proof}

\section{Horizontal, oblique and Rumin Federer-Fleming currents}\label{rumin and FF}

\subsection{Horizontal currents}\label{defhoriz}

Throughout this section, $\mc U$ is an open subset of $\he n$.

Let us recall the following classical definitions (see \cite{federer}, Ch. IV).

\begin{definition}
We denote by $\mathcal D^{k}(\mathcal U)$ the space of all compactly
supported smooth $k$-forms on $\mc U$ endowed with its natural topology,
and by $\mathcal D_{k}(\mathcal U)$ its dual space (the space of Federer-Fleming $k$-currents
on $\mc U$).

If $T\in \mathcal D_{k}(\mathcal U)$ we define its (Riemannian) mass $\mc M(T)$
by
$$
\mc M(T):= \sup_{\omega\in \mathcal D^{k}(\mathcal U), \|\omega\|_{\infty}\le 1}
\Scal{T}{\omega}.
$$
\end{definition}

Recall the splitting of covectors
\begin{equation}\label{dec-weights}
\covH h = \covw {h}{h}\oplus \covw {h}{h+1} =  \covh h\oplus \Big(\covh {h-1}\Big)\wedge \theta.
\end{equation}
Sections of the first summand are called \emph{horizontal} differential forms, and sections of the second summand \emph{vertical}. Thus a differential form is vertical if it is divisible by the contact form $\theta$. 

\begin{definition}
A Federer-Fleming current $T$ is \emph{horizontal} if $T$ and $\partial T$ vanish on vertical forms. Equivalently, if
$$
T\res\theta=0,\quad T\res d\theta=0.
$$
\end{definition}
The equivalent definition stems from the following formula
\begin{align*}
(\partial T)\res\phi=T\res(d\phi)+(-1)^{\text{degree}(\phi)}\partial (T\res\phi),
\end{align*}
which follows from the pointwise identity
\begin{align*}
d(\phi\wedge\omega)=(d\phi)\wedge\omega+(-1)^{\text{degree}(\phi)}\phi\wedge(d\omega).
\end{align*}

Horizontal submanifolds provide a wealth of examples of normal horizontal Federer-Fleming currents. The subspace of horizontal Federer- Fleming currents is invariant under contactomorphisms.

\subsection{Vertical currents}

The naive definition
\begin{definition}[Useless definition] \label{useless}
A Federer-Fleming current $T$ is \emph{vertical} if $T$ and $\partial T$ vanish on horizontal forms.
\end{definition}
turns out to be too restrictive, for the following reason.

\begin{proposition}\label{bad}
If $T$ is a vertical current in the sense of Definition \ref{useless}, then $T\res\theta$ is invariant under vertical translations. Therefore no compactly supported Federer-Fleming current can be vertical.
\end{proposition}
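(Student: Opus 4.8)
The plan is to unwind the definitions and exploit that vertical forms are exactly the multiples of $\theta$. Suppose $T\in\mathcal D_k(\mathcal U)$ is vertical in the sense of Definition \ref{useless}, so that $\Scal{T}{\omega}=0$ and $\Scal{\partial T}{\omega}=0$ for every horizontal $\omega$. First I would translate the hypothesis on $\partial T$ into a statement about $T$: using the identity $(\partial T)\res\phi=T\res(d\phi)+(-1)^{\mathrm{degree}(\phi)}\partial(T\res\phi)$ displayed just above Proposition \ref{bad}, one sees that for any smooth function $f$ and any horizontal form $\omega$, the quantity $\Scal{\partial T}{f\omega}=0$ forces $\Scal{T}{d(f\omega)}=0$; since $d(f\omega)=df\wedge\omega+f\,d\omega$ and $d\omega$ need not be horizontal, this is where the information about the vertical part of $T$ enters. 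Writing $df=\sum(X_if)\,\omega_i+(Y_if)\,\omega_{i+n}+(Zf)\,\theta$ and using that $\omega_i\wedge\omega$, $\omega_{i+n}\wedge\omega$ are horizontal while $d\omega$ decomposes with a $\theta$-part governed by $d_0$, one isolates $\Scal{T\res\theta}{\,\cdot\,}$ paired against $Zf$ times horizontal data.

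The key computational step is to show that $T\res\theta$, a current of dimension $k-1$, is annihilated by the Lie derivative $\mathcal L_Z$ along the vertical vector field $Z=\partial_t$. Concretely I would test $T\res\theta$ against an arbitrary compactly supported horizontal $(k-1)$-form $\eta$ of the appropriate degree and show $\Scal{T\res\theta}{\mathcal L_Z\eta}=0$; since $Z$ generates the one-parameter group of vertical translations, this identity is exactly the assertion that $T\res\theta$ is invariant under vertical translations. The mechanism: $\mathcal L_Z\eta=d(Z\,\lrcorner\,\eta)+Z\,\lrcorner\,d\eta$, and on left-invariant (or $t$-independent) horizontal forms $\mathcal L_Z$ acts trivially, so the content is that pairing against $\theta\wedge(\text{something})$ and differentiating in the $t$-direction can be absorbed into the two vanishing conditions $\Scal{T}{\cdot}=0$ and $\Scal{\partial T}{\cdot}=0$ on horizontal forms. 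The relations $[X_i,Y_i]=Z$ and $d\theta=\sum dx_i\wedge dy_i$ are what make the horizontal derivatives "see" the vertical direction, so that no genuinely new test form is needed.

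The last step is the topological conclusion. If $T$ has compact support in $\mathcal U$, then so does $T\res\theta$; but a nonzero current invariant under the full flow of vertical translations cannot have compact support, because translating its support off to $t\to+\infty$ would have to leave it fixed, contradicting compactness unless $T\res\theta=0$. Once $T\res\theta=0$, vertical forms are multiples of $\theta$, so $T$ vanishes on all vertical forms; combined with the hypothesis that $T$ vanishes on horizontal forms and the splitting \eqref{dec-weights}, we get $T=0$. Hence no compactly supported current is vertical in the sense of Definition \ref{useless}, except the zero current, which is the content of the proposition.

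I expect the main obstacle to be the bookkeeping in the middle step: carefully choosing the test forms and correctly tracking which wedge products land in the horizontal summand of $\covH{\cdot}$ versus the $\theta$-divisible summand, so that the two hypotheses (on $T$ and on $\partial T$) combine to kill precisely the $\partial_t$-derivative of $T\res\theta$ and nothing is left over. In particular one must be careful that $d$ applied to a horizontal test form produces, via $d_0$, a controlled vertical component, and that integrating by parts in $t$ is legitimate — which is automatic since everything is compactly supported. The translation-invariance reformulation and the compact-support contradiction are then routine.
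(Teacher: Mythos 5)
Correct, and essentially the paper's own argument: you derive $\mathcal{L}_Z(T\res\theta)=0$ from the fact that, for a horizontal test form, the vertical part of its exterior differential is $\theta\wedge\mathcal{L}_Z(\cdot)$ (the $Zf$-term in your coordinate computation), and then use compact support to force $T\res\theta=0$ and hence $T=0$, exactly as in the paper. Two small touch-ups: the $\theta$-part of $d$ on horizontal forms is the weight-raising piece $d_2$, not $d_0$, which in fact vanishes on horizontal forms (Definition \ref{d_0}); and the identity $\Scal{T\res\theta}{\mathcal{L}_Z\eta}=0$ for \emph{horizontal} $\eta$ yields translation-invariance only after you also note that $\mathcal{L}_Z(T\res\theta)$ vanishes on vertical test forms — immediate since $\mathcal{L}_Z\theta=0$ and $T\res\theta$ annihilates multiples of $\theta$ — which is the extra line the paper's proof supplies.
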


\begin{proof}
Let $Z=\partial_t$ denote the Reeb vectorfield of $\theta$. Let $\mathcal{L}_Z$ denote the Lie derivative along $Z$. The vertical part of the exterior differential of a horizontal form $\alpha$ is $\theta\wedge\mathcal{L}_Z \alpha$. If $T$ is a vertical current, then for every horizontal form $\alpha$,
\begin{align*}
0=\Scal{\partial T}{\alpha}=\Scal{T}{d\alpha}=\Scal{T}{\theta\wedge \mathcal{L}_Z\alpha}=\Scal{T\res\theta}{\mathcal{L}_Z\alpha}=\Scal{\mathcal{L}_Z(T\res\theta)}{\alpha}.
\end{align*}
Since $\mathcal{L}_Z \theta=0$, 
\begin{align*}
\mathcal{L}_Z(T\res\theta)=\mathcal{L}_Z(T)\res\theta,
\end{align*}
which vanishes on vertical forms. It follows that $\Scal{\mathcal{L}_Z(T\res\theta)}{\omega}=0$ for all test forms $\omega$, i.e. $\mathcal{L}_Z(T\res\theta)=0$. This means that $T\res\theta$ is invariant under the flow generated by $Z$, which consists of vertical translations.

If $T$ is compactly supported, one finds that $T\res\theta=0$, $T$ vanishes both on horizontal and vertical forms, hence $T=0$.

\end{proof}

\subsection{Oblique currents}

Therefore, one must follow a different route. We use Section 5.3 in \cite{rumin_palermo} as a guideline and a source of examples. M. Rumin views currents as given by integration against differential forms $\alpha$ with distributional coefficients.

\begin{notation}\label{notFF}
Let $\mathcal{U}\subset\he{n}$ be an open set. For a differential form $\alpha$ with distributional coefficients on $\mathcal{U}$, let $\FF(\alpha)$ denote the Federer-Fleming current defined by
\begin{align*}
\Scal{\FF(\alpha)}{\omega}:=\int_{\mathcal{U}}\alpha\wedge\omega.
\end{align*}
\end{notation}

M. Rumin determines which submanifolds have the property that their Poincar\'e dual current belongs (in a generalized sense) to the subcomplex $E$ introduced in Theorem \ref{main rumin new}. These are the so-called \emph{co-Legendrian submanifolds} (his computation is reproduced in Example \ref{excolegendrian} below). A submanifold is co-Legendrian if at each point, the intersection of its tangent space with $\ker(\theta)$ is co-isotropic, i.e. contains its $d\theta$-orthogonal complement. Therefore co-Legendrian submanifolds with co-Legendrian boundary provide a wealth of examples.

This suggests defining \emph{smooth oblique currents} by Notation (\ref{notFF}), requiring that $\alpha\in E$. Next, we unravel the definition until it leads us to a formulation allowing data $\alpha$ with merely distributional coefficients.

We shall need the following integration by parts formula.

\begin{lemma}\label{dFF}
Let $0\le k \le 2n+1$. Let $\alpha$ be a smooth $2n+1-k$-form on an open set $\mathcal{U}$. Then
\begin{align*}
\partial\FF(\alpha)=(-1)^{k}\FF(d\alpha).
\end{align*}
\end{lemma}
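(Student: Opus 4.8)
The plan is to prove the identity directly from the definitions of $\FF$ and of the boundary operator on currents, using Stokes' theorem (valid on compactly supported forms, which is why $\mathcal{U}$ is open and $\omega$ is a test form). Recall that for a Federer-Fleming current $S$ of dimension $k$, the boundary $\partial S$ is the $(k-1)$-current defined by $\Scal{\partial S}{\omega}:=\Scal{S}{d\omega}$ for every $\omega\in\mathcal{D}^{k-1}(\mathcal{U})$. Here $\FF(\alpha)$ is a $k$-current (it pairs with $k$-forms), so $\partial\FF(\alpha)$ is a $(k-1)$-current, and one tests it against $\omega\in\mathcal{D}^{k-1}(\mathcal{U})$.

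The computation I would carry out is the following chain of equalities. First, $\Scal{\partial\FF(\alpha)}{\omega}=\Scal{\FF(\alpha)}{d\omega}=\int_{\mathcal{U}}\alpha\wedge d\omega$ by definition. Next, apply the graded Leibniz rule $d(\alpha\wedge\omega)=(d\alpha)\wedge\omega+(-1)^{2n+1-k}\alpha\wedge d\omega$, where $2n+1-k=\deg(\alpha)$. Since $\alpha\wedge\omega$ is a compactly supported smooth $2n$-form on $\mathcal{U}$ — an open subset of the $(2n+1)$-dimensional manifold $\he{n}$ — Stokes' theorem gives $\int_{\mathcal{U}}d(\alpha\wedge\omega)=0$. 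Hence $\int_{\mathcal{U}}\alpha\wedge d\omega=(-1)^{2n+1-k+1}\int_{\mathcal{U}}(d\alpha)\wedge\omega=(-1)^{2n-k}\int_{\mathcal{U}}(d\alpha)\wedge\omega$. Since $d\alpha$ is a $(2n+2-k)$-form and $\omega$ is a $(k-1)$-form, the integrand $(d\alpha)\wedge\omega$ is a $(2n+1)$-form, and $\int_{\mathcal{U}}(d\alpha)\wedge\omega=\Scal{\FF(d\alpha)}{\omega}$ by definition of $\FF$. Finally, $(-1)^{2n-k}=(-1)^{-k}=(-1)^{k}$, so $\Scal{\partial\FF(\alpha)}{\omega}=(-1)^{k}\Scal{\FF(d\alpha)}{\omega}$ for all $\omega$, which is the claimed identity.

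There is essentially no hard part here: the only things to be careful about are (i) getting the sign in the Leibniz rule right in terms of $\deg(\alpha)=2n+1-k$ rather than $k$, and tracking it through to the final $(-1)^k$; (ii) making sure Stokes applies, which it does because $\omega$ has compact support in $\mathcal{U}$ so $\alpha\wedge\omega$ does too, and no boundary terms appear; and (iii) checking that the degrees match so that each wedge product is a top-degree form and the pairings $\Scal{\FF(\cdot)}{\cdot}$ are the ones in Notation \ref{notFF}. The main "obstacle" is purely bookkeeping of signs, so in the writeup I would simply display the chain of equalities with a one-line justification (Stokes, Leibniz) attached.

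\begin{proof}
Let $\omega\in\mathcal{D}^{k-1}(\mathcal{U})$. By definition of the boundary of a current and of $\FF$,
\begin{align*}
\Scal{\partial\FF(\alpha)}{\omega}=\Scal{\FF(\alpha)}{d\omega}=\int_{\mathcal{U}}\alpha\wedge d\omega.
\end{align*}
Since $\alpha$ has degree $2n+1-k$, the graded Leibniz rule gives
\begin{align*}
d(\alpha\wedge\omega)=(d\alpha)\wedge\omega+(-1)^{2n+1-k}\alpha\wedge d\omega.
\end{align*}
The form $\alpha\wedge\omega$ is a smooth $2n$-form with compact support in $\mathcal{U}$, so Stokes' theorem yields $\int_{\mathcal{U}}d(\alpha\wedge\omega)=0$. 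Hence
\begin{align*}
\int_{\mathcal{U}}\alpha\wedge d\omega=(-1)^{2n+2-k}\int_{\mathcal{U}}(d\alpha)\wedge\omega=(-1)^{k}\int_{\mathcal{U}}(d\alpha)\wedge\omega=(-1)^{k}\Scal{\FF(d\alpha)}{\omega},
\end{align*}
where we used $(-1)^{2n+2-k}=(-1)^{-k}=(-1)^{k}$ and the definition of $\FF(d\alpha)$, noting that $(d\alpha)\wedge\omega$ is a $(2n+1)$-form. As this holds for every test form $\omega$, we conclude $\partial\FF(\alpha)=(-1)^{k}\FF(d\alpha)$.
\end{proof}
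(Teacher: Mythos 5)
Your proof is correct: the sign bookkeeping works out (note $2n+1-k+1=2n+2-k$ in the prose, though since both are congruent to $-k$ mod $2$ the conclusion $(-1)^k$ is unaffected), and this is exactly the standard Stokes/Leibniz argument the paper treats as immediate, since it states Lemma \ref{dFF} without proof.
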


\subsubsection{Low dimensional smooth oblique currents}

\begin{lemma}
Let $0\le k\le n$ and $h=2n+1-k$. Let $\alpha$ be a smooth $h$-form on an open set $\mc U$. Then 
\begin{eqnarray*}
\alpha\in E\iff \FF(\alpha) \text{ is horizontal}.
\end{eqnarray*}
\end{lemma}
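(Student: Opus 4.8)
The statement is a degree-by-degree unraveling of the definitions. Since $k\le n$, the complementary degree is $h=2n+1-k\ge n+1$, so by Proposition \ref{RuminE} the condition $\alpha\in E^h$ means precisely that $\alpha$ is vertical and $d\alpha$ is vertical; equivalently $\alpha=\theta\wedge\beta$ with $d\theta\wedge\beta=0$. On the other side, $\FF(\alpha)\in\mathcal D_k(\mathcal U)$ is horizontal exactly when $\FF(\alpha)\res\theta=0$ and $\FF(\alpha)\res d\theta=0$. The plan is to translate each of these two "restriction" conditions into a statement about $\alpha$ using the defining pairing $\Scal{\FF(\alpha)}{\omega}=\int_{\mathcal U}\alpha\wedge\omega$, and to match them up with "$\alpha$ vertical" and "$d\alpha$ vertical".

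\textbf{Key steps.} First I would record the elementary identity $\Scal{\FF(\alpha)\res\psi}{\omega}=\Scal{\FF(\alpha)}{\psi\wedge\omega}=\int_{\mathcal U}\alpha\wedge\psi\wedge\omega=\pm\Scal{\FF(\psi\wedge\alpha)}{\omega}$ for a form $\psi$ of the appropriate degree, so that $\FF(\alpha)\res\psi=\pm\FF(\psi\wedge\alpha)$; in particular $\FF(\alpha)\res\psi=0$ iff $\psi\wedge\alpha=0$ as a distributional form (this uses that $\FF$ is injective on distributional coefficient forms, which follows since $\omega$ ranges over all compactly supported test forms). Applying this with $\psi=\theta$ gives: $\FF(\alpha)\res\theta=0\iff\theta\wedge\alpha=0\iff\alpha$ is divisible by $\theta$, i.e. $\alpha$ is vertical. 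Next, applying it with $\psi=d\theta$ gives $\FF(\alpha)\res d\theta=0\iff d\theta\wedge\alpha=0$. Writing $\alpha=\theta\wedge\beta$ (once we know $\alpha$ is vertical), the condition $d\theta\wedge\alpha=0$ becomes $d\theta\wedge\theta\wedge\beta=0$, i.e. $\theta\wedge(d\theta\wedge\beta)=0$, i.e. $d\theta\wedge\beta$ is divisible by $\theta$; but $d\theta\wedge\beta$ is horizontal, so this forces $d\theta\wedge\beta=0$. Comparing with Proposition \ref{RuminE}, $\{\alpha$ vertical, $d\theta\wedge\beta=0\}$ is exactly $E^h$ in this degree range; and since $d\alpha=-\theta\wedge d\beta+d\theta\wedge\beta$ (as $d(\theta\wedge\beta)=d\theta\wedge\beta-\theta\wedge d\beta$), the vanishing $d\theta\wedge\beta=0$ is also equivalent to $d\alpha$ being vertical. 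This closes the equivalence: $\FF(\alpha)$ horizontal $\iff$ $\alpha$ vertical and $d\theta\wedge\beta=0$ $\iff$ $\alpha\in E$.

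\textbf{Main obstacle.} The computations are all routine once the bookkeeping is set up; the one point needing care is the passage "$d\theta\wedge\beta$ divisible by $\theta$ and horizontal $\Rightarrow$ $d\theta\wedge\beta=0$", which relies on the orthogonal splitting $\cov{k}=\covh{k}\oplus\theta\wedge\covh{k-1}$ (a nonzero horizontal form is never a multiple of $\theta$). One should also be slightly careful about whether "horizontal" for the current $\FF(\alpha)$ requires checking $\partial\FF(\alpha)$ separately: by Lemma \ref{dFF}, $\partial\FF(\alpha)=\pm\FF(d\alpha)$, so the boundary condition is automatically encoded in the two restriction conditions applied to $\FF(\alpha)$ itself (equivalently, "$T$ and $\partial T$ vanish on vertical forms" is equivalent to "$T\res\theta=0$ and $T\res d\theta=0$" as already noted in the text), so no extra work is needed there. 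Finally, one must confirm that in degree $h=2n+1-k$ with $k\le n$ we indeed have $h\ge n+1$, which puts us in the second case of Proposition \ref{RuminE} — this is where the hypothesis $k\le n$ is used.
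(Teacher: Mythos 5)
Your proposal is correct and follows essentially the same route as the paper: both are direct unravelings of Proposition \ref{RuminE} (using $k\le n\Rightarrow h\ge n+1$) against the duality pairing defining $\FF(\alpha)$. The only cosmetic difference is that you use the second characterizations on each side ($\alpha=\theta\wedge\beta$ with $d\theta\wedge\beta=0$, and $T\res\theta=0$, $T\res d\theta=0$), which costs you a small extra linear-algebra step ($\theta\wedge\gamma=0$ with $\gamma$ horizontal forces $\gamma=0$), whereas the paper works directly with ``$\alpha$ and $d\alpha$ vertical $\iff$ $\FF(\alpha)$ and $\partial\FF(\alpha)$ vanish on vertical forms.''
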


\begin{proof}
Since $h\ge n+1$, Lemma \ref{RuminE} yields
\begin{align*}
\alpha\in E &\iff \alpha \text{ and }d\alpha \text{ are vertical}\\
&\iff \FF(\alpha) \text{ and }\partial \FF(\alpha) \text{ vanish on vertical forms}\\
&\iff \FF(\alpha) \text{ is horizontal}.
\end{align*}
\end{proof}

Therefore, smooth oblique currents in dimensions $\le n$ are nothing but smooth horizontal currents. There is no point in commenting further general oblique currents in dimensions $\le n$, they coincide with the horizontal Federer-Fleming currents of Section \ref{defhoriz}.

\subsubsection{High dimensional smooth oblique currents}

\begin{lemma}\label{check}
Let $n+1\le k \le 2n+1$ and $h=2n+1-k$. Let $\alpha$ be a smooth $h$-form on an open set $\mathcal{U}$. Then
\begin{align*}
\alpha\in E \iff \FF(\alpha)\res(\theta\wedge (d\theta)^{n-h+1})=0 \text{ and }\partial\FF(\alpha)\res (\theta\wedge (d\theta)^{n-h})=0.
\end{align*}
\end{lemma}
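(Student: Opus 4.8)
\textbf{Proof strategy for Lemma \ref{check}.}

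The plan is to unravel the characterization of $E^h$ from Proposition \ref{RuminE} in the high-dimensional range, translate it through the pairing defining $\FF$, and identify the resulting conditions as the two stated contraction equations. Since $h=2n+1-k$ with $n+1\le k\le 2n+1$, we have $0\le h\le n$, so the \emph{first} bullet of Proposition \ref{RuminE} applies to $\alpha$: namely $\alpha\in E^h$ if and only if $\theta\wedge(d\theta)^{n-h+1}\wedge\alpha=0$ and $\theta\wedge(d\theta)^{n-h}\wedge d\alpha=0$. The idea is that each of these wedge conditions on the form $\alpha$ corresponds, under $\FF$, to a contraction condition on the current $\FF(\alpha)$.

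The key computation is the following. For a fixed left-invariant form $\phi$ of degree $p$ (here $\phi$ will be $\theta\wedge(d\theta)^{j}$), and any test form $\omega$ of complementary degree, one has
\begin{align*}
\Scal{\FF(\alpha)\res\phi}{\omega}=\Scal{\FF(\alpha)}{\phi\wedge\omega}=\int_{\mathcal{U}}\alpha\wedge\phi\wedge\omega =\pm\int_{\mathcal{U}}(\phi\wedge\alpha)\wedge\omega=\pm\Scal{\FF(\phi\wedge\alpha)}{\omega},
\end{align*}
so that $\FF(\alpha)\res\phi=\pm\FF(\phi\wedge\alpha)$, with the sign depending only on the degrees of $\phi$ and $\alpha$ (it is $(-1)^{(\deg\phi)(\deg\alpha)}$). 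Applying this with $\phi=\theta\wedge(d\theta)^{n-h+1}$ shows that $\FF(\alpha)\res(\theta\wedge(d\theta)^{n-h+1})=0$ if and only if $\FF(\theta\wedge(d\theta)^{n-h+1}\wedge\alpha)=0$; and since the pairing $\beta\mapsto\FF(\beta)$ is injective on forms (a smooth form $\beta$ with $\int\beta\wedge\omega=0$ for all compactly supported $\omega$ of complementary degree must vanish), this is equivalent to $\theta\wedge(d\theta)^{n-h+1}\wedge\alpha=0$, the first equation of Proposition \ref{RuminE}. For the second equation, I would first invoke Lemma \ref{dFF} to write $\partial\FF(\alpha)=(-1)^{k}\FF(d\alpha)$, then apply the same $\res$-versus-$\wedge$ identity with $\phi=\theta\wedge(d\theta)^{n-h}$ to get $\partial\FF(\alpha)\res(\theta\wedge(d\theta)^{n-h})=\pm\FF(\theta\wedge(d\theta)^{n-h}\wedge d\alpha)$, which vanishes if and only if $\theta\wedge(d\theta)^{n-h}\wedge d\alpha=0$, the second equation. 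Combining the two equivalences with Proposition \ref{RuminE} closes the argument.

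The main thing to be careful about is purely bookkeeping: getting the degrees right so that the wedge products and contractions are of complementary degree (note $\deg(\theta\wedge(d\theta)^{n-h+1})=2(n-h+1)+1=2n-2h+3$, and $\deg\alpha=h$, summing to $2n-h+3=k+2$, so $\FF(\alpha)\res(\theta\wedge(d\theta)^{n-h+1})$ is a current of dimension $k-2$, consistent with the $(d\theta)^{n-h+1}$-condition living on a form whose total degree exceeds $2n+1$ only when it is zero), and confirming that $0\le h\le n$ so that the correct branch of Proposition \ref{RuminE} is used. There is no real analytic obstacle here; the only genuine input is Proposition \ref{RuminE} together with the injectivity of $\FF$ on smooth forms and the elementary identity $\FF(\alpha)\res\phi=\pm\FF(\phi\wedge\alpha)$ for left-invariant $\phi$, which is the same mechanism already used implicitly in the preceding lemma.
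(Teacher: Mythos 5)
Your argument is correct and follows essentially the same route as the paper's proof: the $0\le h\le n$ branch of Proposition \ref{RuminE}, the duality between wedging with $\theta\wedge(d\theta)^{j}$ and restriction of $\FF(\alpha)$ (via the injectivity of $\beta\mapsto\FF(\beta)$ on smooth forms), and Lemma \ref{dFF} to convert $\partial\FF(\alpha)$ into $\pm\FF(d\alpha)$. The only blemish is a harmless slip in your parenthetical bookkeeping: $\FF(\alpha)\res(\theta\wedge(d\theta)^{n-h+1})$ is a current of dimension $h-2$, not $k-2$, which does not affect the argument.
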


\begin{proof}
According to Proposition \ref{RuminE}, and using Lemma \ref{dFF},
\begin{align*}
\alpha\in E^h &\iff \theta\wedge (d\theta)^{n-h+1}\wedge\alpha=0 \text{ and }\theta\wedge (d\theta)^{n-h}\wedge d\alpha=0\\
&\iff \forall\omega\in\mc D^k(\mc U),\quad\int_{\mathcal{U}}\theta\wedge (d\theta)^{n-h+1}\wedge\alpha\wedge\omega=0 \\
&\hskip1.2cm\text{ and }\int_{\mathcal{U}}\theta\wedge (d\theta)^{n-h}\wedge d\alpha\wedge\omega=0\\
&\iff \FF(\alpha)\res\theta\wedge (d\theta)^{n-h+1}=0 \text{ and }\FF(d\alpha)\res \theta\wedge (d\theta)^{n-h}=0\\
&\iff \FF(\alpha)\res\theta\wedge (d\theta)^{n-h+1}=0 \text{ and }\partial\FF(\alpha)\res \theta\wedge (d\theta)^{n-h}=0.
\end{align*}
\end{proof}

This suggests the following definitions.

\begin{definition}
Let $n+1\le k \le 2n+1$. A $k$-dimensional Federer-Fleming current $\TFF$ is \emph{co-Legendrian} if
\begin{align*}
\TFF\res (\theta\wedge (d\theta)^{k-n})=0.
\end{align*}
\end{definition}

\begin{definition}
Let $n+1\le k \le 2n+1$. A $k$-dimensional Federer-Fleming current $\TFF$ is \emph{oblique} if $\TFF$ and $\partial \TFF$ are co-Legendrian, i.e. if
\begin{align*}
\TFF\res (\theta\wedge (d\theta)^{k-n})=0 \quad \text{and}\quad \partial \TFF \res (\theta\wedge (d\theta)^{k-n-1})=0.
\end{align*}
\end{definition}

\begin{remark}
The boundary of an oblique Federer-Fleming current of dimension $n+1$ is a horizontal Federer-Fleming current of dimension $n$. This raises the following filling problem. Does every closed horizontal integral $n$-current $S$ in $\he n$ bound an oblique current $T$ with oblique mass $\le O(\mc M(S)^{(n+2)/n})$, with $T$ integral in a suitable sense?
\end{remark}

\begin{example}[see Section 5.3 in \cite{rumin_palermo}] \label{excolegendrian}
Let $k=n+1,\ldots,2n+1$, let $h=2n+1-k$. Let $f:\mc U\to \R^h$ be a smooth map whose differential restricted to the contact hyperplane is onto. Let $\Phi:\R^h\to\R$ be a smooth function. Consider the averaged current of integration defined on $\mc D_k(\mc U)$ by
\begin{align*}
T_{f,\Phi}:\omega \mapsto \int_{\R^h}\left(\int_{f^{-1}(y)}\omega\right)\,\Phi(y)\,dy.
\end{align*}
Then 
 $T_{f,\Phi}$ is oblique if and only if the level sets $f^{-1}(y)$ are co-Legendrian submanifolds.
\end{example}

Letting $\Phi$ approach a Dirac mass, one gets that a smooth submanifold without boundary, transverse to the contact structure is co-Legendrian if and only if its current of integration is oblique. 

We shall see later, in Example \ref{exoblique}, that from a non-co-Legendrian submanifold, one recovers an oblique current by adding the boundary of a current with the same support, which vanishes on vertical forms.

\begin{proof}

Let $\Omega$ denote the volume form of $\R^h$. Then $T_{f,\Phi}=\FF(\alpha)$ for 
$\alpha=f^*\Phi\Omega$. Note that $d\alpha=0$. Remember the notation of paragraph \ref{E0E}: $L$ denotes multiplication with $d\theta$, $\Lambda$ the adjoint of $L$ and $\star$ the Hodge operator on covectors on the contact hyperplane $\ker(\theta)$. According to Proposition \ref{RuminE}, denoting by $\alpha_H$ the restriction of $\alpha$ to the contact hyperplane,
\begin{eqnarray*}
\alpha\in E \iff \alpha_H\in \mathcal{R}(L)^{\bot}.
\end{eqnarray*}
Lemma \ref{Hodge} implies that $\mathcal{R}(L)^{\bot}=\ker(\Lambda)=\ker(L\star)$. So
\begin{align*}
T_{f,\Phi} \text{ is oblique}\iff L\star\alpha_H=0.
\end{align*}
Let $\tau=\ker(df)$ denote the tangent space to the level sets $f^{-1}(y)$, $\tau_H:=\tau\cap\ker(\theta)$ its horizontal part and $\tau_H^\bot$ its orthogonal in $\ker(\theta)$. Since $\star\alpha_H$ is collinear to the simple covector associated to $\tau_H$, 
\begin{align*}
v\in\tau_H^\bot \iff \iota_v(\star\alpha_H)=0.
\end{align*}
For $v\in \tau_H^\bot$,
\begin{align*}
\iota_v(d\theta\wedge(\star\alpha_H))=\iota_v(d\theta)\wedge(\star\alpha_H)+d\theta\wedge\iota_v(\star\alpha_H)=\iota_v(d\theta)\wedge(\star\alpha_H).
\end{align*}
For $v,w\in \tau_H^\bot$,
\begin{align*}
\iota_w \iota_v(d\theta\wedge(\star\alpha_H))=d\theta(v,w)\star\alpha_H.
\end{align*} 
Thus 
\begin{align*}
L(\star\alpha_H)=0 \implies (d\theta)(v,w)=0 \text{ for all }v,w\in\tau_H^\bot \implies d\theta_{|\tau_H^\bot}=0,
\end{align*}
i.e., $\tau_H^\bot$ is isotropic. Let $J$ denote the almost complex structure on $\ker(\theta)$ relating the inner product to the symplectic form $d\theta$: $d\theta(v,Jw)=v\cdot w$. The $d\theta$-orthogonal of $\tau_H$ is $J\tau_H^\bot$. Since $d\theta(Jv,Jw)=d\theta(v,w)$, we conclude that the $d\theta$-orthogonal of $\tau_H$ is isotropic, i.e. $\tau_H$ is co-isotropic. Therefore, the level-sets $f^{-1}(y)$ are co-Legendrian.

Conversely, if all level-sets $f^{-1}(y)$ are co-Legendrian, $\tau_H$ is co-isotropic, and $\tau_H^\bot$ is isotropic. Pick an orthonormal basis $(e_i)$ of $\ker\theta$ such that $e_1,\ldots e_h$ is a basis of $\tau_H^\bot$. Let $\lambda_{1},\ldots, \lambda_{2n}$ be the dual basis. Then $\star\alpha_H$ is collinear to $\lambda_{h+1}\wedge\cdots \wedge \lambda_{2n}$. If
$$
d\theta=\sum_{i_1<i_2} t_{i_1,i_2}\lambda_{i_1}\wedge \lambda_{i_2},
$$
then, up to a scalar,
\begin{align*}
d\theta\wedge\star\alpha_H=\sum_{i_1<i_2\le h} t_{i_1,i_2}\lambda_{i_1}\wedge \lambda_{i_2}\wedge\lambda_{h+1}\wedge\cdots \wedge \lambda_{2n}.
\end{align*}
Since $d\theta$ vanishes on $\tau_H^\bot$, all $t_{i_1,i_2}$, $i_1<i_2\le h$ vanish, so $d\theta\wedge\star\alpha_H=0$. This shows that $T_{f,\Phi}$ is oblique.

\end{proof}

\subsection{Rumin currents}

\begin{notation} 
The space of smooth, compactly supported \Rumin forms on $\mc U$ is denoted by $\mathcal D(\mathcal U,E_0^k)$. 
\end{notation}

\begin{definition}\label{defcorrenti} 
Let $\mc U\subset \he n$ be
an open set.
We call \emph{\Rumin $k$-current}, $0\leq k \leq 2n+1$,
any continuous linear functional on $ \mathcal D(\mathcal U,E_0^k)$,
  and we denote by $\mathcal D_{\he{},k}(\mathcal U)$ 
  the set of all \Rumin $k$-currents.
     
  \end{definition}
  
    \begin{remark}
The definition of Rumin's current given in \cite{FSSC_advances} relies on the initial definition of Rumin's forms in \cite{rumin_jdg}, alluded to in Section \ref{rumin heisenberg}, involving quotients of spaces of differential forms. Clearly, the two classes of currents are isomorphic.
  \end{remark}
  
  \begin{definition}
  If  $\TR\in \mathcal D_{\he{},k}(\mathcal U)$, we define its (Rumin) \emph{mass} $\mc M_{\mathbb H}(\TR)$ 
by
$$
\mc M_{\mathbb H}(\TR):= \sup_{\gamma\in \mathcal D(\mathcal U, E_0^k),\, \|\gamma\|_{\infty}\le 1}
\Scal{\TR}{\gamma}.
$$

\end{definition}
  
\begin{definition}
 If $\TR\in \mathcal D_{\he{},k}(\mathcal U)$, we define its \emph{Rumin boundary}
 $\partial_{\mathbb H}\TR$
 by
 $$
 \Scal{\partial_{\mathbb H}\TR}{\gamma} := \Scal{\TR}{d_c\gamma} .
 $$
\end{definition}

\subsection{Integral currents in low dimensions}

The following result is a special instance of Pansu--Rademacher's differentiability theorem
in Carnot groups (see \cite{pansu_annals}).

\begin{proposition}\label{pansu rad}Let $\mc V\subset \R^k$ be an open set,
and   $\phi: \mc V\to \he n$ a Lipschitz map with respect to dilation homogeneous distances. 
Then $\phi$ is P-differentiable at a.e. $p_0\in \mc V$, i.e.
there is a (unique) graded group homomorphism $d_P f(p_0): \R^k \to \he n$ such that
\begin{equation}\label{P diff}
d_Pf_{p_0}(v ):=\lim_{s\to 0}\mathfrak s_{1/s}\left( \phi(p_0)^{-1}\phi(p_0+sv)\right)
\end{equation}
uniformly for $v$ in compact subsets of $\R^k$. 
\end{proposition}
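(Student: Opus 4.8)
The final statement to prove is Proposition~\ref{pansu rad}, the Pansu--Rademacher differentiability theorem for Lipschitz maps $\phi:\mc V\subset\R^k\to\he n$. Since the excerpt explicitly says this is ``a special instance of Pansu--Rademacher's differentiability theorem in Carnot groups'', the natural plan is to recall the classical argument rather than reinvent it, but let me sketch how I would organize it.

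\medskip

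The plan is to reduce the statement to the classical Rademacher theorem in Euclidean space followed by a blow-up/rescaling argument adapted to the grading. First I would note that $\phi$ is in particular Lipschitz from $\mc V$ (with the Euclidean metric) to $\he n$ equipped with a dilation-homogeneous distance $d$; since $d$ is locally comparable, on compact sets, to $\max(|\bar\phi(p)-\bar\phi(q)|,\,|\phi_{2n+1}(p)-\phi_{2n+1}(q)|^{1/2})$ (the gauge structure \eqref{gauge}), the horizontal components $\bar\phi=(\phi_1,\dots,\phi_{2n})$ are Euclidean-Lipschitz and the vertical component $\phi_{2n+1}$ is $\tfrac12$-H\"older, hence (being a coordinate of a map which is genuinely Lipschitz for a group distance) one checks it is in fact Lipschitz too after accounting for the group structure. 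By the classical Rademacher theorem, $\phi:\mc V\to\R^{2n+1}$ is differentiable in the ordinary sense at a.e.\ $p_0$; the issue is that the ordinary differential need not be a graded group homomorphism, so the vertical part of the ordinary differential must be shown to vanish a.e.

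\medskip

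The second step is the blow-up. Fix a point $p_0$ of ordinary differentiability and consider the rescaled maps
$$
\phi_s(v):=\mathfrak s_{1/s}\bigl(\phi(p_0)^{-1}\cdot\phi(p_0+sv)\bigr).
$$
Using the explicit dilations \eqref{dilations} and the group law, one computes that the horizontal part of $\phi_s(v)$ is $\tfrac1s\bigl(\bar\phi(p_0+sv)-\bar\phi(p_0)\bigr)$, which converges (uniformly on compacta) to $L\,v$ where $L=d\bar\phi(p_0)$ is the ordinary horizontal differential. The vertical part of $\phi_s(v)$ is $\tfrac1{s^2}$ times an expression combining $\phi_{2n+1}(p_0+sv)-\phi_{2n+1}(p_0)$ and a bilinear symplectic correction term in the horizontal increments. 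The key point is that the uniform Lipschitz bound (in the homogeneous distance) forces the family $\{\phi_s\}_{0<s<1}$ to be uniformly Lipschitz with respect to the homogeneous distance, hence precompact; any subsequential limit is a homogeneous (i.e.\ $\mathfrak s_\lambda$-equivariant) Lipschitz map $\R^k\to\he n$ whose horizontal part is the linear map $v\mapsto Lv$. One then invokes the structure theorem for homogeneous homomorphisms between Carnot groups: a homogeneous Lipschitz map from an abelian group $\R^k$ to $\he n$ whose horizontal part is linear must be a group homomorphism, and is determined by its horizontal part (its vertical component is the unique correction making it a homomorphism, which here is $\tfrac12$ of the pulled-back symplectic form $Lv\wedge Lv'$, identically zero since $v,v'$ range over the abelian $\R^k$). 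This identifies the limit uniquely as $d_Pf_{p_0}$, and uniqueness of the limit along all subsequences gives convergence of the full family.

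\medskip

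The main obstacle — and the place where one must be careful — is showing that the limit is independent of the subsequence, equivalently that the rescaled vertical components $s^{-2}\bigl(\phi_{2n+1}(p_0+sv)-\phi_{2n+1}(p_0)-\tfrac12\sum_j(\bar\phi_j(p_0)\cdots)\bigr)$ genuinely converge rather than merely being precompact. The clean way is the one used by Pansu and by Magnani: show that at a.e.\ $p_0$ one has the \emph{second-order} cancellation
$$
\phi_{2n+1}(p_0+sv)-\phi_{2n+1}(p_0)=\tfrac12\sum_{j=1}^n\bigl(\bar\phi_j^{(x)}\,d\bar\phi_j^{(y)}-\bar\phi_j^{(y)}\,d\bar\phi_j^{(x)}\bigr)(p_0)(sv)+o(s^2),
$$
which follows from the contact condition forced by the Lipschitz hypothesis together with ordinary differentiability of $\bar\phi$: integrating $d\phi_{2n+1}$ and using $\phi^*\theta=0$ a.e.\ (the pullback of the contact form vanishes since $\phi$ is horizontal almost everywhere, again a consequence of the homogeneous-Lipschitz bound on the vertical increment), a Taylor expansion pins down the quadratic term. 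Granting this, the vertical part of $\phi_s$ converges to the (zero) symplectic expression and the whole limit is the graded homomorphism with horizontal part $d\bar\phi(p_0)$. The rest — uniformity on compact subsets of $\R^k$, uniqueness of $d_Pf_{p_0}$ as a graded homomorphism — is routine once convergence is established. Since this is standard, I would simply cite \cite{pansu_annals} for the full argument and record only the shape of $d_Pf_{p_0}$ that will be used in the sequel.
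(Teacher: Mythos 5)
The paper offers no proof of this proposition: it is quoted verbatim as a special case of Pansu's differentiability theorem with the citation \cite{pansu_annals}. So your closing move --- sketch the Euclidean-source specialization and then cite \cite{pansu_annals} --- lands exactly where the paper does, and the outline (Rademacher for the coordinate components, blow-up $\phi_s(v)=\mathfrak s_{1/s}(\phi(p_0)^{-1}\phi(p_0+sv))$, second-order cancellation of the vertical increment, Arzel\`a--Ascoli-type upgrade to uniformity) is the standard route for this special case.

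Two assertions inside the sketch are, however, not correct as stated, and since they sit at the crux you should not rely on them. First, a subsequential limit of the rescaled maps $\phi_s$ is \emph{not} automatically $\mathfrak s_\lambda$-homogeneous: the identity $\phi_s(\lambda v)=\mathfrak s_\lambda\,\phi_{\lambda s}(v)$ relates the values along the subsequence $s_j$ to values along $\lambda s_j$, so homogeneity of a limit is equivalent to the subsequence-independence you are trying to prove; you implicitly concede this when you name the main obstacle, but the earlier sentence claiming homogeneity of every subsequential limit is a gap. Second, the symplectic correction $\omega(Lu,Lv)$, with $L=d\bar\phi(p_0)$, is not ``identically zero since $v,v'$ range over the abelian $\R^k$'': a general linear $L$ into $\mathfrak h_1$ need not have isotropic image. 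What the vertical cancellation actually needs is only the diagonal identity $\omega(Lv,Lv)=0$ (antisymmetry), which appears after you write the rescaled vertical increment as $s^{-2}\int_0^1\tfrac12\,\omega\bigl(\bar\phi(p_0+rsv)-\bar\phi(p_0),\,d\bar\phi(p_0+rsv)(sv)\bigr)\,dr$ (valid using $\phi^*\theta=0$ a.e.\ along almost every segment) and use a Lebesgue-point argument to replace the two entries by $rsLv$ and $L(sv)$; note this gives $o(s^2)$ only after that replacement --- the crude Lipschitz bound alone gives $O(s^2)$. The isotropy $\omega(Lu,Lh)=0$, which is what makes the limit $v\mapsto(Lv,0)$ a genuine group homomorphism, must then be \emph{derived}, e.g.\ from the homogeneous Lipschitz bound inherited by the limit: it forces $|\omega(Lu,Lh)|\le C|h|^2$ for all $u,h$, and replacing $u$ by $tu$ and letting $t\to\infty$ kills the left-hand side. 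Finally, your use of $\phi^*\theta=0$ a.e.\ is legitimate and non-circular (it follows from ordinary differentiability plus the first-order bound on the vertical increment), but be aware that the paper's Lemma \ref{dtheta} proves this fact \emph{from} the present proposition, so it cannot be quoted here.
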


\begin{lemma}\label{dtheta}
Let $\phi:\R^k\to\he{n}$ be a Lipschitz map. Then  $\phi^*\theta=0$, $\phi^*d\theta=0$ a.e. in $\R^k$. 
Furthermore, if $k>n$, then $\phi^*\omega=0$ a.e. for every differential $k$-form $\omega$.
\end{lemma}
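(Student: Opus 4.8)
The plan is to deduce the Lemma from Pansu--Rademacher differentiability (Proposition~\ref{pansu rad}) together with the elementary fact that a graded group homomorphism $\R^k\to\he n$ has horizontal image, and that this image is isotropic for $d\theta$.

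First I would use Proposition~\ref{pansu rad} to fix a point $p_0$ at which $\phi$ is P-differentiable; such points form a set of full measure, so it suffices to establish the three identities at $p_0$. Set $h:=d_P\phi_{p_0}:\R^k\to\he n$, a homogeneous group homomorphism, and write $h(v)=(\ell(v),q(v))$ with $\ell(v)\in\R^{2n}$ the horizontal part and $q(v)\in\R$ the central part. Homogeneity $h(\lambda v)=\mathfrak s_\lambda h(v)$ forces $\ell$ to be $1$-homogeneous and $q$ to be $2$-homogeneous, while the homomorphism law makes $\ell$ additive --- hence linear --- and yields the cocycle identity $q(v+w)=q(v)+q(w)+c(\ell v,\ell w)$, where $c$ is the antisymmetric bilinear cocycle appearing in the product of $\he n$ (which is, up to a nonzero scalar, the symplectic form $d\theta$ on $\R^{2n}$). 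Taking $w=v$ and using $c(u,u)=0$ gives $q(2v)=2q(v)$, whereas $2$-homogeneity gives $q(2v)=4q(v)$; hence $q\equiv 0$, and then the cocycle identity forces $c(\ell v,\ell w)=0$ for all $v,w$. Thus $\ell(\R^k)$ is an isotropic subspace of $(\R^{2n},d\theta)$, so $\dim\ell(\R^k)\le n$ and $h(v)=(\ell v,0)$.

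The second step is to read this back as ordinary differentiability of $\phi$ at $p_0$. Write $\phi=(\bar\phi,\phi_{2n+1})$ with $\bar\phi:\R^k\to\R^{2n}$ the horizontal part and $\phi_{2n+1}$ the last coordinate. The horizontal component of $\phi(p_0)^{-1}\phi(p_0+sv)$ equals $\bar\phi(p_0+sv)-\bar\phi(p_0)$, and $\mathfrak s_{1/s}$ divides it by $s$; so the uniform convergence in \eqref{P diff} is precisely the statement that $\bar\phi$ is differentiable at $p_0$ with $d\bar\phi_{p_0}=\ell$. The central component of $\phi(p_0)^{-1}\phi(p_0+sv)$ equals $\phi_{2n+1}(p_0+sv)-\phi_{2n+1}(p_0)-c\big(\bar\phi(p_0),\bar\phi(p_0+sv)\big)$, and $\mathfrak s_{1/s}$ divides it by $s^2$, with limit $q(v)=0$; combined with the differentiability of $\bar\phi$ and with $c(\bar\phi(p_0),\bar\phi(p_0))=0$, this gives $\phi_{2n+1}(p_0+sv)-\phi_{2n+1}(p_0)=s\,c(\bar\phi(p_0),\ell v)+o(s)$ uniformly in $v$, so $\phi_{2n+1}$ is differentiable at $p_0$ as well, with $d(\phi_{2n+1})_{p_0}=c(\bar\phi(p_0),\ell\,\cdot\,)$. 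Hence $\phi$ is classically differentiable at $p_0$. From the explicit formula \eqref{theta} one then reads off $(\phi^*\theta)(p_0)=0$, since $d(\phi_{2n+1})_{p_0}$ is exactly the correction term subtracted from $d(\phi_{2n+1})$ in $\phi^*\theta$; and $(\phi^*d\theta)(p_0)(v,w)=d\theta(\ell v,\ell w)=0$ by the isotropy obtained above. As $p_0$ ranges over a full-measure set, $\phi^*\theta=0$ and $\phi^*d\theta=0$ a.e.

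For the last assertion, at such a $p_0$ the rank of $d\phi_{p_0}:\R^k\to\R^{2n+1}$ equals $\dim\ell(\R^k)\le n$, because its central coordinate is a linear function of the horizontal ones; so if $k>n$ the vectors $d\phi_{p_0}e_1,\dots,d\phi_{p_0}e_k$ are linearly dependent, whence $(\phi^*\omega)(p_0)=\omega\big(d\phi_{p_0}e_1,\dots,d\phi_{p_0}e_k\big)=0$ for every $k$-form $\omega$, and $\phi^*\omega=0$ a.e. I do not anticipate a genuine obstacle; the only delicate bookkeeping is in the second step, namely turning P-differentiability at $p_0$ into honest differentiability of the last coordinate $\phi_{2n+1}$ --- a priori only $\tfrac12$-Hölder --- with the stated value of its differential. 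One can sidestep this by noting that the Lipschitz bound on the distance already forces $\big|\phi_{2n+1}(q)-\phi_{2n+1}(p)-c(\bar\phi(p),\bar\phi(q))\big|\le C|p-q|^2$, so that $\phi$ is in fact locally Euclidean-Lipschitz and ordinary Rademacher gives a.e.\ differentiability directly, the homogeneous-homomorphism computation of the first step then serving only to identify the differential and to establish the isotropy of its horizontal part.
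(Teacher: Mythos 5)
Your argument is correct, and for the two identities $\phi^*\theta=0$, $\phi^*d\theta=0$ it runs on the same mechanism as the paper: Pansu differentiability forces the vertical component of the increment to be $o(s^2)$, which kills $\phi^*\theta$, while the fact that the image of $d_P\phi$ is a commutative (hence $d\theta$-isotropic) horizontal subgroup kills $\phi^*d\theta$. The paper is terser: it invokes classical differentiability a.e.\ separately (exactly your ``sidestep'': $d\ge c\,|\cdot|_{\mathrm{Euc}}$ locally, so a homogeneously Lipschitz map is Euclidean--Lipschitz and ordinary Rademacher applies), normalizes by a left translation so that $\theta(e)=dt$, and reads off the two vanishings; your main route instead extracts classical differentiability of both layers directly from the uniform convergence in \eqref{P diff}, with the explicit identification $d(\phi_{2n+1})_{p_0}=c(\bar\phi(p_0),\ell\,\cdot)$ --- a more self-contained but equivalent bookkeeping. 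The genuine divergence is in the case $k>n$: the paper splits $\omega=\omega_H+\theta\wedge\omega_T$, discards the vertical part using $\phi^*\theta=0$, and then invokes Weil's Lefschetz-type theorem (horizontal forms of degree $>n$ are divisible by $d\theta$) together with $\phi^*d\theta=0$; you instead observe that, at a point of differentiability, the image of $d\phi_{p_0}$ is a graph over the isotropic subspace $\ell(\R^k)$, hence has rank $\le n<k$, so \emph{every} $k$-form pulls back to zero. Your rank argument is more elementary (no primitive decomposition needed) and handles all $k$-forms at once; the paper's version buys brevity via the citation and isolates the algebraic fact (divisibility by $d\theta$) that is reused elsewhere in the Rumin formalism. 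No gap to report.
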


\begin{proof} By Proposition \ref{pansu rad}, $\phi$ is P-differentiable as well as differentiable in the usual sense at a.e. $x\in \R^k$.
Without loss of generality we may assume $x=0$ and $\phi (0)=e$.

If we decompose $\phi$ according to the components of the different layers of $\he n$,
we can write $\phi=(\phi_1,\phi_2)$, where $\phi_i\in \mathfrak h_i$, $i=1,2$. By \eqref{P diff},
if $v\in \R^k$, then
$$
d_P\phi(0)(v) = \lim_{s\to 0} (s^{-1}\phi_1(sv), s^{-2} \phi_2(sv)),
$$
and hence
$$
\lim_{s\to 0} s^{-2} \phi_2(sv)\quad\text{exists},
$$
so that $\lim_{s\to 0} s^{-1} \phi_2(sv)=0$. By our choice of $\phi(0) =e$, $\theta(e)=dt$, so that
$$
\Scal{\phi^*\theta}{v} = \Scal{dt}{d\phi(0)(v)} = \lim_{s\to 0} s^{-1} \phi_2(sv)=0.
$$
Also 
$$
\Scal{\phi^*d\theta}{v} = \Scal{d\theta}{d\phi(0)(v)} = \Scal{d\theta}{\lim_{s\to 0} s^{-1} \phi_1(sv)}=\Scal{d\theta}{d_P\phi(0)}.
$$
Since $d_P\phi$ is a graded group morphism, its image is contained in a commutative and horizontal subgroup of $\he{n}$, on which the symplectic form $d\theta$ vanishes, so $\Scal{d\theta}{d_P\phi(0)}=0$ and $\phi^*d\theta(0)=0$.

If $k>n$, given  a $k$-form $\omega=\omega_H + \theta\wedge\omega_T$
with $\omega_H $ a horizontal $k$-form,  it follows from $\phi^*\theta=0$
that $\phi^*\omega = \phi^*\omega_H$. On the other hand, by a classical result in K\"ahlerian geometry
(see e.g. \cite{weil}, Theorem 3 p. 26), $\omega_H$ is divisible by $d\theta$, and then 
$\phi^*\omega_H =0 $ since $\phi^*d\theta=0$. 

\end{proof}

Next we define \Rumin integral currents, following Remark 5.7 and Definition 5.18 in \cite{FSSC_advances}, but allowing Lipschitz parametrizations (with respect to a dilation homogeneous metric) instead of $C^1$ maps.

\begin{definition} \label{defLipRumin}
A current $\TR\in \mathcal D_{\he{},k}(\mathcal U)$ of dimension $k$ is said a \emph{Lipschitz
current of integration} if there exist a compact subset $K$ of $\R^k$ and a Lipschitz map $\phi:\R^k \to \he n$ such that 
$$
\Scal{\TR}{\alpha}:= \int_K \phi^*\alpha \,d \mathcal L^k=\Scal{\phi_\#|\![K]\!|}{\alpha},
$$
where $\phi^*\alpha$ is defined a.e. thanks to Lemma \ref{dtheta}, $\mathcal L^k$ is the $k$-dimensional Lebesgue measure, and
$|\![K]\!|$ is the current in $\R^k$ defined by
$$
|\![K]\!|  = \mathcal L^k\res K.
$$

\end{definition}

\begin{remark} If $k>n$, by Lemma \ref{dtheta}, all Lipschitz currents of integration in $ \mathcal D_{\he{},k}(\mathcal U)$ are trivial.
On the contrary, it is shown in \cite{FSSC_advances} that there exist (nonLipschitz) currents of integration
in $ \mathcal D_{\he{},k}(\mathcal U)$ also when $k>n$.

\end{remark}

\begin{remark} 
Requiring parametrizations $\phi$ to be defined only on the subset $K$ instead on all of $\R^k$ might sound more appropriate. This makes no difference, since a Lipschitz extension theorem holds, see \cite{GromovCC} and \cite{WengerYoung}. No such extension property can hold if $k>n$, see \cite{BaloghFassler}.

\end{remark}

\begin{definition}   \label{defintRumin}
A \emph{Rumin integral current} is a current $\TR\in \mathcal D_{\he{},k}(\mathcal U)$ such that
 \begin{enumerate}
  \item $\TR$ is normal, i.e. $\mc M_{\mathbb H}(\TR)+\mc M_{\mathbb H}(\partial_{\mathbb H} \TR) $ is finite;
  \item $\TR$ is a sum of a countable family of Lipschitz currents of integration.\end{enumerate}

\end{definition}

\begin{remark}
The fact that the boundary of an integral current is again an integral current is not built in Definition \ref{defintRumin}. It will follow from Corollary \ref{integral}.
\end{remark}

\section{Rumin versus horizontal/oblique}\label{correspondences}

\subsection{Low dimensions}

\begin{proposition}\label{identificationcurrents}
If $1\leq k \leq n$, any Rumin current $\TR\in \mathcal D_{\he{},k}(\mathcal U)$
can be identified with a horizontal Federer-Fleming $k$-current $\widetilde \TR\in \mathcal D_{k}(\mathcal U)$,  by setting
\begin{equation*}
\Scal{\widetilde \TR}{\omega}:= \Scal{\TR}{\Pi_{E_0}\omega}, \qquad \text{for all }\omega \in \mathcal D^{k}(\mathcal U).
\end{equation*}
Moreover, $\mc M(\widetilde \TR)\le \mc M_{\mathbb H}(\TR)$ and $\partial\widetilde \TR=\widetilde{\partial_{\he{}}\TR}$.

Conversely, if $\TFF \in\mathcal D_{k}(\mathcal U)$ is a horizontal Federer-Fleming current, then $\TFF $ induces a \Rumin $k$-current
$\widehat \TFF \in \mathcal D_{\he{},k}(\mathcal U)$ setting
$$
\Scal{\widehat \TFF }{\gamma}:=\Scal{\TFF }{\Pi_E\gamma}\qquad\text{for all }\gamma\in \mc D(\mathcal{U},E_0^k).
$$
 Then $\mc M_{\mathbb H}(\widehat \TFF )\le  \mc M(\TFF )$ and $\partial_{\he{}}\widehat \TFF =\widehat{\partial \TFF }$.

\end{proposition}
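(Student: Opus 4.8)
The plan is to view both assignments as currents precomposed with bundle maps: $\widetilde\TR=\TR\circ\Pi_{E_0}$ and $\widehat\TFF=\TFF\circ\Pi_E$, so that the whole argument amounts to sliding $\Pi_{E_0}$ and $\Pi_E$ across the duality pairing, keeping track of weights and of the hypothesis $k\le n$. The only genuinely non-formal inputs are Proposition \ref{E0h} — specifically that in degree $\le n$ the projector $\Pi_{E_0}$ annihilates every multiple of $\theta$ and of $d\theta$ — and Theorem \ref{main rumin new}. First I would settle well-definedness. Since $\Pi_{E_0}$ is an orthogonal, support-preserving bundle endomorphism, $\omega\mapsto\Pi_{E_0}\omega$ is continuous $\mathcal D^k(\mathcal U)\to\mathcal D(\mathcal U,E_0^k)$, so $\langle\widetilde\TR,\omega\rangle:=\langle\TR,\Pi_{E_0}\omega\rangle$ is a Federer--Fleming $k$-current. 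For the converse map, Theorem \ref{main rumin new}(ii) gives $\Pi_E\gamma=\gamma-d_0^{-1}d_1\gamma$ for $k\le n$, a first-order horizontal differential operator composed with an algebraic one; hence $\gamma\mapsto\Pi_E\gamma$ is continuous $\mathcal D(\mathcal U,E_0^k)\to\mathcal D^k(\mathcal U)$ and $\langle\widehat\TFF,\gamma\rangle:=\langle\TFF,\Pi_E\gamma\rangle$ is a Rumin current.

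Next I would check that $\widetilde\TR$ is horizontal and prove the mass bounds. Using the criterion $T\res\theta=T\res d\theta=0$: for a test form $\phi$, $\langle\widetilde\TR\res\theta,\phi\rangle=\langle\TR,\Pi_{E_0}(\theta\wedge\phi)\rangle=0$ and $\langle\widetilde\TR\res d\theta,\phi\rangle=\langle\TR,\Pi_{E_0}(d\theta\wedge\phi)\rangle=0$, because $\Pi_{E_0}$ kills multiples of $\theta$ and of $d\theta$ in degree $k\le n$. Since $\Pi_{E_0}$ is a pointwise orthogonal projector, $\|\Pi_{E_0}\omega\|_\infty\le\|\omega\|_\infty$, whence $\mc M(\widetilde\TR)\le\mc M_{\mathbb H}(\TR)$. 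For the other bound the key observation is that, $\TFF$ being horizontal, it vanishes on the vertical form $d_0^{-1}d_1\gamma$, so $\langle\widehat\TFF,\gamma\rangle=\langle\TFF,\gamma\rangle$ for every $\gamma\in\mathcal D(\mathcal U,E_0^k)$; as $\gamma$ is then a $k$-form with $\|\gamma\|_\infty\le1$, this gives $\mc M_{\mathbb H}(\widehat\TFF)\le\mc M(\TFF)$ (and makes $\widehat{\ \cdot\ }$ concrete: it is just restriction of $\TFF$ to Rumin forms).

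The delicate point is commutation with boundaries. For the first identity I would reduce to the pointwise claim $\Pi_{E_0}d\omega=d_c\Pi_{E_0}\omega$ in $\mathcal D(\mathcal U,E_0^k)$ for every test $(k-1)$-form $\omega$; granting this, $\partial\widetilde\TR=\widetilde{\partial_{\mathbb H}\TR}$ follows by unwinding definitions, since $\langle\partial\widetilde\TR,\omega\rangle=\langle\TR,\Pi_{E_0}d\omega\rangle$ while $\langle\widetilde{\partial_{\mathbb H}\TR},\omega\rangle=\langle\partial_{\mathbb H}\TR,\Pi_{E_0}\omega\rangle=\langle\TR,d_c\Pi_{E_0}\omega\rangle$. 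To prove $\Pi_{E_0}d\omega=\Pi_{E_0}d\,\Pi_E\Pi_{E_0}\omega$, write $\omega-\Pi_E\Pi_{E_0}\omega=(\mathrm{Id}-\Pi_{E_0})\omega-(\Pi_E\Pi_{E_0}\omega-\Pi_{E_0}\omega)$; by Proposition \ref{E0h} in degree $k-1\le n$ the first summand is a sum of a multiple of $\theta$ and a multiple of $d\theta$, and by Theorem \ref{main rumin new}(ii) the second summand is vertical, hence again a multiple of $\theta$. Applying $d$ to a multiple of $\theta$ or of $d\theta$ produces only multiples of $\theta$ and of $d\theta$ (from $d(\theta\wedge a)=d\theta\wedge a-\theta\wedge da$ and $d(d\theta\wedge b)=d\theta\wedge db$), and $\Pi_{E_0}$ kills these in degree $k\le n$; hence $\Pi_{E_0}d(\omega-\Pi_E\Pi_{E_0}\omega)=0$, as wanted. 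For the second boundary identity the computation is cleaner: combining $d_c=\Pi_{E_0}d\Pi_E$, the chain-map property $d\Pi_E=\Pi_E d$ (Theorem \ref{main rumin new}(i)), and $\Pi_E\Pi_{E_0}\Pi_E=\Pi_E$ (Theorem \ref{main rumin new}(iv)) yields the exact identity $\Pi_E d_c\gamma=\Pi_E\Pi_{E_0}\Pi_E d\gamma=\Pi_E d\gamma=d\Pi_E\gamma$, so that $\langle\partial_{\mathbb H}\widehat\TFF,\gamma\rangle=\langle\TFF,\Pi_E d_c\gamma\rangle=\langle\TFF,d\Pi_E\gamma\rangle=\langle\partial\TFF,\Pi_E\gamma\rangle=\langle\widehat{\partial\TFF},\gamma\rangle$.

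I expect the main obstacle to be purely organizational: all the analysis and algebra is supplied by Rumin's theorems, and the only place the argument could break is the bookkeeping — ensuring every $d$ of a $\theta$- or $d\theta$-multiple stays of that type, and that the total degree never exceeds $n$ at the instant $\Pi_{E_0}$ is applied. The degenerate case $k=1$ (no $d\theta$-condition; $\Pi_{E_0}=\Pi_E=\mathrm{Id}$ on $0$-forms) is immediate. I would close with the remark that the same computations give $\Pi_{E_0}\Pi_E=\mathrm{Id}$ on $E_0^k$ and $\langle\TFF,\omega-\Pi_E\Pi_{E_0}\omega\rangle=0$ for horizontal $\TFF$ (its $\theta$- and $d\theta$-multiples being killed by $\TFF\res\theta=\TFF\res d\theta=0$), so that $\widehat{\widetilde\TR}=\TR$ and $\widetilde{\widehat\TFF}=\TFF$: the two maps are mutually inverse, and combining the two mass inequalities the masses in fact coincide.
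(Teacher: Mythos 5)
Your argument is correct and follows essentially the paper's own proof: the same ingredients drive both, namely Proposition \ref{E0h} (in degree $\le n$, $\Pi_{E_0}$ kills every multiple of $\theta$ and of $d\theta$), Theorem \ref{main rumin new} (ii) and (iv), the identity $d_c=\Pi_{E_0}\,d\,\Pi_E$, and the fact that $\Pi_{E_0}$ is a norm-nonincreasing pointwise projector while $\Pi_E$ is a continuous differential operator. The only organizational difference is that you prove the boundary compatibility $\partial\widetilde \TR=\widetilde{\partial_{\he{}}\TR}$ via the pointwise intertwining $\Pi_{E_0}d\omega=d_c\Pi_{E_0}\omega$ on test forms of degree $\le n-1$ (and get the converse mass bound from the verticality of $\Pi_E\gamma-\gamma=-d_0^{-1}d_1\gamma$), whereas the paper argues at the level of currents, inserting $\Pi_E\Pi_{E_0}$ into the pairing by means of the horizontality of $\partial\widetilde\TR$ and the decomposition $\Pi_E\gamma-\gamma=\theta\wedge\eta_1+d\theta\wedge\xi_2$ --- the same algebra, phrased on forms rather than on currents.
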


The Rumin current $\widehat \TFF$ is well defined since $\Pi_E$, being a differential operator, is a continuous map from $ \mc D(\mathcal{U},E_0^k)$ to $\mc D(\mathcal{U},\cov{k})$.

\begin{proof} 
First of all, we notice that, if $\omega\in  \mathcal D^{k}(\mathcal U)$  
then $\Pi_{E_0}\omega\in \mathcal D(\mathcal U,  E_0^k)$, so that $\Scal{\widetilde \TR}{\omega} $
is well defined. In addition
\begin{equation}\label{aug31 eq:1}
\Scal{\widetilde \TR\res \theta}{\omega} = \Scal{\widetilde \TR}{\theta\wedge\omega} = \Scal{\TR}{\Pi_{E_0}(\theta\wedge\omega)} = 0,
\end{equation}
by Proposition \ref{E0h}. Moreover
\begin{equation}\label{aug31 eq:2}
\Scal{\partial \widetilde \TR\res \theta}{\omega} = \Scal{\widetilde \TR}{d (\theta\wedge\omega)}
=  \Scal{\TR}{\Pi_{E_0}(-\theta\wedge d\omega+d\theta\wedge\omega)} =0,
\end{equation}
by Proposition \ref{E0h} again. Finally, if $\omega\in \mathcal D^{h}(\mathcal U)$
and $\|\omega\|\le 1$, then $\|\Pi_{E_0}\omega\|\le 1$, so that
$$
\Scal{\widetilde \TR}{\omega} = \Scal{\TR}{\Pi_{E_0}\omega} \le \sup_{\xi\in \mc D(\mathcal{U},E_0^k),\, \|\xi\| \le 1}\Scal{\TR}{\xi} =\mc M_{\mathbb H}(\TR).
$$

Since $\partial\widetilde \TR$ is horizontal, for every $\omega\in\mathcal D^{k-1}(\mathcal U)$,
\begin{align*}
\Scal{\partial\widetilde \TR}{\omega}&=\Scal{\partial\widetilde \TR}{\Pi_{E}\Pi_{E_0}\omega}=\Scal{\widetilde \TR}{d\Pi_E\Pi_{E_0}\omega}=\Scal{\TR}{\Pi_{E_0}d\,\Pi_{E}\Pi_{E_0}\omega}\\
&=\Scal{\TR}{d_c\Pi_{E_0}\omega}=\Scal{\partial_{\he{}}\TR}{\Pi_{E_0}\omega}
=\Scal{\widetilde{\partial_{\he{}}\TR}}{\omega}.
\end{align*}

Take now $\gamma\in \mc D(\mc U,E_0^h)$. We have
$$
\Pi_E\gamma = \Pi_{E_0}\Pi_E\gamma + \Pi_{E_0}^\perp\Pi_E\gamma.
$$
For the first summand,
$$
\Pi_{E_0}\Pi_E\gamma = \Pi_{E_0}\Pi_E \Pi_{E_0}\gamma =  \Pi_{E_0}\gamma=\gamma.
$$
On the other hand, by Proposition \ref{E0h}, $\Pi_{E_0}^\perp\Pi_E\gamma= \xi_1 +  d\theta\wedge \xi_2$,
with $\xi_1\in (\covh{h})^\perp$, i.e. with $\xi_1=\theta\wedge\eta_1$.
Thus, for a horizontal Federer-Fleming current $\TFF $,
\begin{equation}\label{sept18 eq:1}
\Scal{\TFF }{\Pi_E\gamma} = \Scal{\TFF }{\gamma} + \Scal{\TFF \res \theta}{\eta_1} + \Scal{\TFF \res d \theta}{ \xi_2} =  \Scal{\TFF }{\gamma}.
\end{equation}

In order to estimate $\mc M_{\mathbb H} (\widehat \TFF )$, let us take $\gamma\in \mc D (\mc U, E_0^k)$ with 
$$\|\gamma\|_{L^\infty(\mc U,\cov{k})} = \|\gamma\|_{L^\infty(\mc U, E_0^k)}\le 1.$$
Then
$$
\Scal{\widehat \TFF }{\gamma}=\Scal{\TFF }{\Pi_E\gamma} = \Scal{\TFF }{\gamma}  \le  \mc M(\TFF ),
$$
so that $\mc M_{\mathbb H}(\widehat \TFF ) \le  \mc M(\TFF )$.

Since $E$ is a subcomplex, $dE\subset E$, so $(\Pi_{E}\Pi_{E_0})_{|dE}=(\Pi_{E}\Pi_{E_0}\Pi_{E})_{|dE}=(\Pi_{E})_{|dE}$ is the identity on $dE$. Therefore, for all $\gamma\in\mathcal D^{k-1}(\mathcal U, E_0^h)$,
\begin{align*}
\Scal{\partial_{\he{}}\widehat \TFF }{\gamma}
&=\Scal{\widehat \TFF }{d_c\gamma}=\Scal{\TFF }{\Pi_{E}\Pi_{E_0}\, d\,\Pi_{E}\gamma}=\Scal{\TFF }{d\,\Pi_{E}\gamma}\\
&=\Scal{\partial \TFF }{\Pi_{E}\gamma}=\Scal{\widehat{\partial \TFF }}{\gamma}.
\end{align*}

\end{proof}

\begin{proposition}\label{involutive}  
If $0\le k\le n$ and $\TR\in\mathcal D_{\he{},k}(\mathcal U)$, then
$$
\widehat {\widetilde \TR} =\TR.
$$
Conversely, if $\TFF \in \mathcal D_{k}(\mathcal U)$ is horizontal, then
$$
\widetilde{\widehat{\TFF }} = \TFF .
$$

\end{proposition}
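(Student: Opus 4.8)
The plan is to unwind both compositions directly from the defining formulas for $\widetilde{(\cdot)}$ and $\widehat{(\cdot)}$ in Proposition~\ref{identificationcurrents}, and to reduce everything to two facts already at hand: the idempotence relations $\Pi_{E_0}\Pi_{E}\Pi_{E_0}=\Pi_{E_0}$ and $\Pi_{E}\Pi_{E_0}\Pi_{E}=\Pi_{E}$ of Theorem~\ref{main rumin new}(iv), and the description of $\Pi_{E_0}$ and of its orthogonal complement in Proposition~\ref{E0h}. For $k=0$ there is nothing vertical and $\Pi_{E_0},\Pi_E$ are the identity, so one may assume $1\le k\le n$.

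For the first identity I would test $\widehat{\widetilde{\TR}}$ against an arbitrary $\gamma\in\mathcal D(\mathcal U,E_0^k)$. By the two definitions, $\Scal{\widehat{\widetilde{\TR}}}{\gamma}=\Scal{\widetilde{\TR}}{\Pi_E\gamma}=\Scal{\TR}{\Pi_{E_0}\Pi_E\gamma}$, which is legitimate because $\Pi_E$ is a differential operator, so $\Pi_E\gamma$ is an admissible test form for $\widetilde{\TR}$ (as already observed after the statement of Proposition~\ref{identificationcurrents}). Since $\gamma$ is a section of $E_0^k$ we have $\Pi_{E_0}\gamma=\gamma$, whence $\Pi_{E_0}\Pi_E\gamma=\Pi_{E_0}\Pi_E\Pi_{E_0}\gamma=\Pi_{E_0}\gamma=\gamma$ by Theorem~\ref{main rumin new}(iv). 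Therefore $\Scal{\widehat{\widetilde{\TR}}}{\gamma}=\Scal{\TR}{\gamma}$ for every $\gamma$, i.e. $\widehat{\widetilde{\TR}}=\TR$.

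For the converse I would test $\widetilde{\widehat{\TFF}}$ against an arbitrary $\omega\in\mathcal D^k(\mathcal U)$: $\Scal{\widetilde{\widehat{\TFF}}}{\omega}=\Scal{\widehat{\TFF}}{\Pi_{E_0}\omega}=\Scal{\TFF}{\Pi_E\Pi_{E_0}\omega}$. Applying equation~\eqref{sept18 eq:1} from the proof of Proposition~\ref{identificationcurrents} with $\gamma=\Pi_{E_0}\omega\in E_0^k$ (this is where Theorem~\ref{main rumin new}(iv) and the structure of $\Pi_{E_0}$ enter, to write $\Pi_{E_0}^\perp\Pi_E\gamma$ as a sum $\theta\wedge\eta_1+d\theta\wedge\xi_2$) gives $\Scal{\TFF}{\Pi_E\Pi_{E_0}\omega}=\Scal{\TFF}{\Pi_{E_0}\omega}$. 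It remains to compare $\Pi_{E_0}\omega$ with $\omega$: by Proposition~\ref{E0h}, $\omega-\Pi_{E_0}\omega=\Pi_{E_0}^\perp\omega$ lies in $\theta\wedge\covh{k-1}\oplus d\theta\wedge\covh{k-2}$, say $\omega-\Pi_{E_0}\omega=\theta\wedge\eta+d\theta\wedge\zeta$ with $\eta,\zeta$ compactly supported and smooth, so $\Scal{\TFF}{\omega-\Pi_{E_0}\omega}=\Scal{\TFF\res\theta}{\eta}+\Scal{\TFF\res d\theta}{\zeta}=0$ because $\TFF$ is horizontal. Hence $\Scal{\widetilde{\widehat{\TFF}}}{\omega}=\Scal{\TFF}{\omega}$ for all $\omega$, i.e. $\widetilde{\widehat{\TFF}}=\TFF$.

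I do not expect a genuine obstacle: both directions are formal consequences of the projector identities of Theorem~\ref{main rumin new}(iv) and the splitting of Proposition~\ref{E0h}, and the converse half is essentially a rerun of the computations \eqref{aug31 eq:1}--\eqref{sept18 eq:1} in the proof of Proposition~\ref{identificationcurrents}. The only point demanding some care is the bookkeeping that $\Pi_E\gamma$ and the auxiliary forms $\eta,\zeta$ extracted from $\omega-\Pi_{E_0}\omega$ are again compactly supported smooth forms, so that all pairings are meaningful; this holds because $\Pi_{E_0}$ and the fibrewise splitting of Proposition~\ref{E0h} are algebraic operators while $\Pi_E$ is a differential operator.
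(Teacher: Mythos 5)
Your proof is correct and follows essentially the same route as the paper: the first identity via $\Pi_{E_0}\Pi_E\Pi_{E_0}=\Pi_{E_0}$ from Theorem~\ref{main rumin new}(iv), and the converse via \eqref{sept18 eq:1} together with horizontality of $\TFF$. In fact you spell out the final step $\Scal{\TFF}{\Pi_{E_0}\omega}=\Scal{\TFF}{\omega}$ (using the splitting of Proposition~\ref{E0h}) which the paper's write-up leaves implicit, so nothing further is needed.
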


\begin{proof} 

If $\gamma\in \mathcal D(\mathcal U,E_0^k)$, $\Pi_{E_0}\gamma=\gamma$, so, using Theorem \ref{main rumin new} iv),
\begin{align*}
\Scal{\widehat {\widetilde \TR} }{\gamma}&=\Scal{\widetilde \TR}{\Pi_E\gamma}=\Scal{ \TR}{\Pi_{E_0}\Pi_E\gamma}\\
&=
\Scal{ \TR}{\Pi_{E_0}\Pi_E\Pi_{E_0}\gamma} = \Scal{ \TR}{\Pi_{E_0}\gamma} =\Scal{\TR}{\gamma}.
\end{align*}

Conversely, by \eqref{aug31 eq:1}, $\widehat \TFF \res \theta=0$ and $\widehat \TFF \res d\theta=0$. Thus, by
\eqref{sept18 eq:1}, for all $\omega\in \mathcal D^{h}(\mathcal U)$,
$$
\Scal{\widetilde{\widehat{\TFF }}}{\omega}  = \Scal{\widehat \TFF }{\Pi_{E_0}\omega}=
 \Scal{ \TFF  }{\Pi_E\Pi_{E_0}\omega}= \Scal{\TFF }{\omega}.
$$

\end{proof}

\begin{corollary}\label{normrumin}
In dimensions $0\le k\le n$, the maps $\TR\mapsto \widetilde \TR$ and $\TFF \mapsto \widehat \TFF $ are mass isometries. It follows that normal and flat currents in both theories correspond.
\end{corollary}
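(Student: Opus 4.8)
The plan is to bootstrap the mass isometry from the one-sided estimates of Proposition \ref{identificationcurrents} together with the fact, proved in Proposition \ref{involutive}, that $\TR\mapsto\widetilde\TR$ and $\TFF\mapsto\widehat\TFF$ are mutually inverse. Indeed, for $\TR\in\mathcal D_{\he{},k}(\mathcal U)$ with $0\le k\le n$, Proposition \ref{involutive} gives $\widehat{\widetilde\TR}=\TR$, and then the two inequalities of Proposition \ref{identificationcurrents} combine into
$$\mc M_{\mathbb H}(\TR)=\mc M_{\mathbb H}(\widehat{\widetilde\TR})\le\mc M(\widetilde\TR)\le\mc M_{\mathbb H}(\TR),$$
so all three quantities coincide and $\mc M(\widetilde\TR)=\mc M_{\mathbb H}(\TR)$. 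The symmetric computation, using $\widetilde{\widehat\TFF}=\TFF$, yields $\mc M_{\mathbb H}(\widehat\TFF)=\mc M(\TFF)$. Hence both maps preserve mass exactly, which is the first assertion.

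Next I would record that Proposition \ref{identificationcurrents} already furnishes the boundary intertwining relations $\partial\widetilde\TR=\widetilde{\partial_{\he{}}\TR}$ and $\partial_{\he{}}\widehat\TFF=\widehat{\partial\TFF}$. Applying the mass isometry just established both to a current $T$ and to its boundary gives $\mc M_{\mathbb H}(\TR)+\mc M_{\mathbb H}(\partial_{\he{}}\TR)=\mc M(\widetilde\TR)+\mc M(\partial\widetilde\TR)$, and likewise in the other direction. Consequently $\TR$ is normal if and only if $\widetilde\TR$ is normal, and $\TFF$ is normal if and only if $\widehat\TFF$ is; so the correspondence restricts to a mass-preserving, boundary-commuting bijection between normal Rumin currents and normal horizontal Federer--Fleming currents in degree $k\le n$.

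Finally, for flat currents one invokes that the flat (semi)norm in either theory is assembled purely out of masses and the boundary operator, $\mathbb F(T)=\inf\{\mc M(T-\partial S)+\mc M(S)\}$, the infimum being over currents $S$ of dimension $k+1$ in the respective theory. Since $T\mapsto\widetilde T$ is a bijection preserving mass and intertwining $\partial$, and since the analogous bijection is available in dimension $k+1$ as well (so that the competitors $S$ match up), it is an isometry for the flat norms and therefore carries flat currents to flat currents, as does its inverse; normal and flat currents in the two theories thus correspond. The only point needing a little care is the bookkeeping at the threshold $k=n$, where the competitors $S$ are $(n+1)$-dimensional and the relevant identification is the high-dimensional one rather than Proposition \ref{identificationcurrents}; the statement is to be read relative to the classes of competitors for which these identifications apply. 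Beyond this bookkeeping there is no genuine obstacle: all the substance sits in Propositions \ref{identificationcurrents} and \ref{involutive}, and the corollary is a formal consequence.
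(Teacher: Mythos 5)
Your argument is correct and is essentially the paper's own proof: the same two chains of inequalities obtained by combining Propositions \ref{identificationcurrents} and \ref{involutive}, followed by the boundary intertwining relations to transfer normality and flatness. Your more explicit treatment of the flat norm (including the caveat about the $(k+1)$-dimensional competitors at the threshold $k=n$) only elaborates on what the paper states tersely, namely that since boundaries correspond, so do normal and flat currents.
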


\begin{proof}
This follows from the chains of inequalities
\begin{align*}
\mc M_{\mathbb{H}}(\TR)&=\mc M_{\mathbb{H}}(\widehat{\widetilde \TR})\le \mc M(\widetilde \TR)\le \mc M_{\mathbb{H}}(\TR),\\
\mc M(\TFF )&=\mc M(\widetilde{\widehat{\TFF }})\le \mc M_{\mathbb{H}}(\widehat \TFF )\le \mc M(\TFF ).
\end{align*}
Since boundaries also correspond under the maps $\TR\mapsto \widetilde \TR$ and $\TFF \mapsto \widehat \TFF $, so do normal currents and flat currents.
\end{proof}

\subsection{Correspondence between integral currents}

The proof that integral currents correspond under the maps $\TR\mapsto \widetilde \TR$ and $\TFF \mapsto \widehat \TFF $ passes via metric integral currents, in a cycle
$$
\text{Rumin}\implies\text{Federer-Fleming}\implies\text{Ambrosio-Kirchheim}\implies\text{Rumin}.
$$
Two steps are provided here, the main step being postponed until Section \ref{AK versus FF}.

\begin{lemma}\label{Ruminint=>FFint}
Every Rumin integral current $\TR$ defines a Federer-Fleming horizontal integral current $\widetilde \TR$.
\end{lemma}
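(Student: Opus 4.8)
The plan is to unwind the two conditions in Definition \ref{defintRumin} and transport them across the correspondence $\TR\mapsto\widetilde\TR$ of Proposition \ref{identificationcurrents}. By Corollary \ref{normrumin}, the map $\TR\mapsto\widetilde\TR$ is a mass isometry commuting with boundaries, so the normality condition (1) immediately gives that $\widetilde\TR$ is a normal Federer-Fleming current: $\mc M(\widetilde\TR)=\mc M_{\mathbb H}(\TR)<\infty$ and $\mc M(\partial\widetilde\TR)=\mc M(\widetilde{\partial_{\he{}}\TR})=\mc M_{\mathbb H}(\partial_{\he{}}\TR)<\infty$. Moreover $\widetilde\TR$ is horizontal by the very construction in Proposition \ref{identificationcurrents}. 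So the only substantive task is to show that condition (2) — being a countable sum of Lipschitz Rumin currents of integration — forces $\widetilde\TR$ to be an \emph{integer-multiplicity rectifiable} Federer-Fleming current (which, together with normality, is exactly the Federer-Fleming notion of integral current).

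The key step is therefore to analyze a single Lipschitz Rumin current of integration $S=\phi_\#|\![K]\!|$, where $\phi:\R^k\to\he n$ is Lipschitz with respect to a homogeneous distance and $k\le n$. First I would observe that, since a homogeneous distance on $\he n$ is locally bi-Lipschitz-dominated by the Euclidean distance (the identity $(\he n,d)\to(\R^{2n+1},|\cdot|_{\mathrm{Euc}})$ is locally Lipschitz), the map $\phi$ is in particular locally Lipschitz as a map into $\R^{2n+1}$, hence the Euclidean pushforward $\phi_\#^{\mathrm{Euc}}|\![K]\!|$ is a genuine integer-multiplicity rectifiable Federer-Fleming $k$-current on $\he n$ in the classical sense (this is the Euclidean area/coarea machinery, \cite{federer}). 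Next I would check that this Euclidean pushforward, viewed through the identification of Proposition \ref{identificationcurrents}, coincides with $\widetilde{S}$: for a test $k$-form $\omega$, $\Scal{\widetilde S}{\omega}=\Scal{S}{\Pi_{E_0}\omega}=\int_K\phi^*(\Pi_{E_0}\omega)\,d\mathcal L^k$, and because $\phi^*\theta=\phi^*d\theta=0$ a.e.\ by Lemma \ref{dtheta}, we have $\phi^*(\Pi_{E_0}\omega)=\phi^*\omega$ a.e.\ (the correction $\Pi_{E_0}\omega-\omega$ is a combination of multiples of $\theta$ and $d\theta$ by Proposition \ref{E0h}), so $\Scal{\widetilde S}{\omega}=\int_K\phi^*\omega\,d\mathcal L^k=\Scal{\phi_\#^{\mathrm{Euc}}|\![K]\!|}{\omega}$. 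Thus $\widetilde S$ is integer-multiplicity rectifiable in the Federer-Fleming sense.

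Finally I would assemble the countable sum. Writing $\TR=\sum_{j}S_j$ with each $S_j$ a Lipschitz Rumin current of integration, the correspondence is linear so $\widetilde\TR=\sum_j\widetilde{S_j}$, a countable sum of integer-multiplicity rectifiable Federer-Fleming currents. The partial sums converge in mass (the total mass is controlled since $\TR$ is normal, so $\sum_j\mc M_{\mathbb H}(S_j)$ can be taken finite, or at least one argues via the closure theorem that the limit current is again integer-multiplicity rectifiable); rectifiability and integrality of the limit then follow from the Federer-Fleming closure theorem for integral currents, \cite{federer}, \cite{federer_fleming}, once normality is known — which it is, from the first paragraph. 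Hence $\widetilde\TR$ is a horizontal Federer-Fleming integral current.

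\textbf{Main obstacle.} The delicate point is the passage from ``Lipschitz with respect to a homogeneous distance'' to controlling the Euclidean pushforward as an \emph{integer-multiplicity rectifiable} current, and, more importantly, handling the \emph{infinite} sum: a priori a countable sum of rectifiable currents of finite mass need not be rectifiable, so one genuinely needs the normality hypothesis (1) together with the Federer-Fleming closure/compactness theorem to conclude that the limit is integral. Everything else is bookkeeping built on Lemma \ref{dtheta}, Proposition \ref{identificationcurrents}, and Corollary \ref{normrumin}.
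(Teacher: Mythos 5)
Your proposal is correct and follows essentially the same route as the paper: decompose $\TR$ into Lipschitz currents of integration, observe that Lipschitz with respect to a homogeneous distance implies Riemannian/Euclidean Lipschitz so each piece pushes forward to a Federer--Fleming rectifiable current, use Lemma \ref{dtheta} together with Proposition \ref{E0h} to see that these pushforwards are horizontal and agree with the image of $\TR$ under the identification of Proposition \ref{identificationcurrents}, then sum the series and invoke normality to conclude integrality. Your explicit check that $\widetilde S$ coincides with the classical pushforward, and your remarks on the convergence of the infinite sum, only make explicit what the paper's terser proof leaves implicit.
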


\begin{proof}
By Definition \ref{defintRumin}, $\TR=\sum T_j$ with $T_j=(\phi_j)_{\#}|\![K_j]\!|$ a Lipschitz current of integration of Rumin forms, as in Definition \ref{defLipRumin}. A fortiori, $\phi_j$ is Lipschitz with respect to the Riemannian distance, so $\tilde T_j=(\phi_j)_{\#}|\![K_j]\!|$ makes sense as a Federer-Fleming rectifiable current. Lemma \ref{dtheta} shows that $\tilde T_j$ is horizontal. This fact, the decomposition of Proposition \ref{E0h} and the convergence of $\sum_j T_j$ as Rumin currents imply the convergence of the series $\widetilde \TR=\sum_j\tilde T_j$ as Federer-Fleming currents. So $\widetilde \TR$ is a Federer-Fleming rectifiable current. Since it is normal, it is a (horizontal) integral current. Note that it follows that $\partial \widetilde \TR$ is again an integral current.
\end{proof}

\begin{lemma}
Every integral metric current $\TAK$ defines a Rumin integral current $T_R$.
\end{lemma}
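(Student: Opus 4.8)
The plan is to unwind the definition of an Ambrosio--Kirchheim integral current $\TAK$ in dimension $k\le n$ (the statement is vacuous for $k>n$ by Williams' theorem), produce from it a Federer--Fleming current, check it is horizontal and integral, and then apply the bijection $\TFF\mapsto\widehat{\TFF}$ of Proposition~\ref{identificationcurrents} together with Lemma~\ref{Ruminint=>FFint}'s machinery to land in the Rumin world. Recall that a metric integral current $\TAK\in\mathcal I_k(\he n)$ decomposes, up to an error of small mass, as a countable sum $\sum_j (\phi_j)_{\#}|\![\theta_j]\!|$ where $\phi_j:K_j\subset\R^k\to\he n$ is Lipschitz (with respect to a homogeneous distance, which on bounded sets is bi-H\"older-equivalent, and in fact in the relevant regularity Lipschitz-equivalent on rectifiable pieces, to the Euclidean one) and $\theta_j$ are integer multiplicities; this is the structure theorem of \cite{AK_acta}. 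The action on a metric $k$-form $f\,dg_1\wedge\cdots\wedge dg_k$ is $\int_{K_j}\theta_j\,(f\circ\phi_j)\det\big(\partial(g_i\circ\phi_j)\big)\,d\mathcal L^k$, which is exactly the action of the Federer--Fleming rectifiable current $(\phi_j)_{\#}|\![\theta_j]\!|$ on the smooth form $f\,dg_1\wedge\cdots\wedge dg_k$ when $f,g_i$ are smooth. So one first checks that the Federer--Fleming current $\tilde\TAK:=\sum_j(\phi_j)_{\#}|\![\theta_j]\!|$ is well defined (finite mass, since metric mass controls Federer--Fleming mass on the rectifiable pieces up to a dimensional constant) and represents $\TAK$ under the canonical embedding of metric currents into Federer--Fleming currents.

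Next I would verify that $\tilde\TAK$ is \emph{horizontal}. Each $\phi_j$ is Lipschitz into $(\he n,d)$, so by Lemma~\ref{dtheta} we have $\phi_j^*\theta=0$ and $\phi_j^*d\theta=0$ almost everywhere on $K_j$; hence each $(\phi_j)_{\#}|\![\theta_j]\!|$ satisfies $\big((\phi_j)_{\#}|\![\theta_j]\!|\big)\res\theta=0$ and $\big((\phi_j)_{\#}|\![\theta_j]\!|\big)\res d\theta=0$, and these identities pass to the (mass-convergent) sum. Thus $\tilde\TAK\res\theta=0$ and $\tilde\TAK\res d\theta=0$, which is precisely the condition that $\tilde\TAK$ is a horizontal Federer--Fleming current. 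Moreover $\tilde\TAK$ is a rectifiable Federer--Fleming current with integer multiplicities, and since $\TAK$ is a metric \emph{integral} current, $\partial\TAK$ is again a metric integral current, so the same construction applied to $\partial\TAK$ shows $\partial\tilde\TAK$ is a rectifiable Federer--Fleming current; hence $\tilde\TAK$ is a Federer--Fleming \emph{integral} horizontal current.

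Finally I would invoke Proposition~\ref{identificationcurrents} and Corollary~\ref{normrumin}: the horizontal Federer--Fleming current $\tilde\TAK$ induces a Rumin current $\widehat{\tilde\TAK}\in\mathcal D_{\he{},k}(\mathcal U)$, which is normal because $\mc M_{\mathbb H}(\widehat{\tilde\TAK})\le\mc M(\tilde\TAK)<\infty$ and likewise for its Rumin boundary, which by Proposition~\ref{identificationcurrents} equals $\widehat{\partial\tilde\TAK}$. It remains to recognize $\widehat{\tilde\TAK}$ as a countable sum of Lipschitz currents of integration of Rumin forms in the sense of Definition~\ref{defLipRumin}: indeed $\widehat{(\phi_j)_{\#}|\![\theta_j]\!|}$ acts on $\gamma\in\mathcal D(\mathcal U,E_0^k)$ by $\Scal{(\phi_j)_{\#}|\![\theta_j]\!|}{\Pi_E\gamma}=\int_{K_j}\theta_j\,\phi_j^*(\Pi_E\gamma)\,d\mathcal L^k$, and since $\Pi_E\gamma-\gamma$ is vertical (Theorem~\ref{main rumin new} ii)) and $\phi_j^*$ annihilates vertical forms in degree $\le n$ (as $\phi_j^*\theta=0$), this equals $\int_{K_j}\theta_j\,\phi_j^*\gamma\,d\mathcal L^k$, exactly the Lipschitz Rumin current of integration $(\phi_j)_{\#}|\![K_j]\!|$ with multiplicity $\theta_j$ of Definition~\ref{defLipRumin}. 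Hence $\widehat{\tilde\TAK}=\sum_j (\phi_j)_{\#}|\![\theta_j]\!|$ converges as Rumin currents and is a Rumin integral current by Definition~\ref{defintRumin}. The main obstacle I anticipate is the first step: carefully matching the Ambrosio--Kirchheim structure theorem (which only gives parametrizations that are Lipschitz for a homogeneous distance, with the multiplicity/rectifiability data) to the Federer--Fleming rectifiable-current formalism, and controlling the mass comparison so the series converges simultaneously in both senses; the rest is bookkeeping on top of the already-established correspondence.
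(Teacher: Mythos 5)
Your argument is correct in substance, but it takes a longer route than the paper. Both proofs start from the same input, the Ambrosio--Kirchheim parametric representation theorem (Theorem 4.5 of \cite{AK_acta}), which writes $\TAK$ exactly (not merely ``up to an error of small mass'') as a countable, mass-additive sum of pushforwards by maps that are Lipschitz for a homogeneous distance. The paper essentially stops there: since Definition \ref{defLipRumin} was formulated precisely for parametrizations that are Lipschitz with respect to a dilation-homogeneous metric, each summand is already a Lipschitz Rumin current of integration, and convergence as metric currents, being stronger than distributional convergence on Rumin forms, yields a Rumin integral current in the sense of Definition \ref{defintRumin}. You instead detour through Federer--Fleming currents: you build $\widetilde\TAK$, check horizontality via Lemma \ref{dtheta}, and then apply the hat map of Proposition \ref{identificationcurrents}, verifying that the hat of each pushforward coincides with the direct Rumin current of integration because $\Pi_E\gamma-\gamma$ is vertical and $\phi_j^*\theta=0$ a.e. This is a correct and even instructive consistency check (and it creates no circularity, since the metric-to-Federer--Fleming direction is the easy one, handled in Section \ref{AK->FF}), but it costs you the mass comparison $\mc M(\widetilde\TAK)\le C\,\mc M(\TAK)$ and the convergence of the series in the Federer--Fleming sense, none of which the direct argument needs. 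Two small imprecisions to fix: the parametric representation is exact; and the homogeneous and Euclidean distances are not Lipschitz-equivalent --- only the one-sided bound (Lipschitz for the homogeneous distance implies locally Lipschitz for the Riemannian one) holds, which is fortunately the direction you actually use.
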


\begin{proof}
According to the parametric representation theorem (Theorem 4.5 in \cite{AK_acta}), $\TAK$ is a countable sum of metric currents of the form $f_\#(A)$ for Lipschitz maps $f:A\to\he{n}$, $A\subset \R^k$, with additivity of masses. Each summand in particular defines a Lipschitz current of integration on Rumin forms, as in Definition \ref{defLipRumin}. The convergence as metric currents being stronger than in the sense of distributions, this provides us with a Rumin integral current $T_R$ according to \ref{defintRumin}. 

\end{proof}

\begin{corollary}\label{integral}
Integral currents correspond under the maps $\TR\mapsto \widetilde \TR$ and $\TFF \mapsto \widehat \TFF $ between Rumin currents and horizontal Federer-Fleming currents.
\end{corollary}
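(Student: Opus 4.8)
The goal is Corollary \ref{integral}: integral currents correspond under $\TR\mapsto\widetilde\TR$ and $\TFF\mapsto\widehat\TFF$. My plan is to close the cycle
\[
\text{Rumin}\implies\text{Federer-Fleming}\implies\text{Ambrosio-Kirchheim}\implies\text{Rumin}
\]
using the pieces already assembled. Lemma \ref{Ruminint=>FFint} gives the first arrow: a Rumin integral current $\TR$ yields a Federer-Fleming horizontal integral current $\widetilde\TR$. The third arrow (every integral metric current defines a Rumin integral current) has just been proved. The missing second arrow — every Federer-Fleming horizontal integral current is an Ambrosio-Kirchheim metric integral current — is the deep input and is postponed to Section \ref{AK versus FF}; here I simply quote it. So the structure of the argument is: assemble these three implications to see that the class of Rumin integral currents, the class of Federer-Fleming horizontal integral currents, and the class of metric integral currents are carried into one another, and then invoke the bijectivity of $\TR\mapsto\widetilde\TR$ / $\TFF\mapsto\widehat\TFF$ from Proposition \ref{involutive} to conclude that the correspondence restricts to a bijection between Rumin integral currents and Federer-Fleming horizontal integral currents.

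In more detail, first I would record that by Proposition \ref{involutive} the maps $\TR\mapsto\widetilde\TR$ and $\TFF\mapsto\widehat\TFF$ are mutually inverse bijections between $\mathcal D_{\he{},k}(\mathcal U)$ and the space of horizontal Federer-Fleming currents, and by Corollary \ref{normrumin} they are mass isometries commuting with boundary, hence preserve normality. So it suffices to check that the image of a Rumin integral current under $\TR\mapsto\widetilde\TR$ is a Federer-Fleming integral current, and conversely that if $\TFF$ is a horizontal Federer-Fleming integral current then $\widehat\TFF$ is a Rumin integral current. The first direction is exactly Lemma \ref{Ruminint=>FFint}. For the converse: given a horizontal Federer-Fleming integral current $\TFF$, it is in particular a rectifiable current with rectifiable boundary and finite mass; I would first argue (this is where the second arrow of the cycle enters) that $\TFF$ is then an Ambrosio-Kirchheim metric integral current, using the result of Section \ref{AK versus FF}; then the just-proved lemma "every integral metric current $\TAK$ defines a Rumin integral current" applies, producing a Rumin integral current $S$ with $\widetilde S=\TFF$; by the bijectivity of Proposition \ref{involutive}, $S=\widehat\TFF$, so $\widehat\TFF$ is a Rumin integral current.

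The one point requiring care is the bookkeeping of which map is applied where — making sure that the Rumin integral current produced from $\TAK$ via the parametric representation theorem is literally $\widehat\TFF$ and not merely some Rumin current with the same associated Federer-Fleming current; this is immediate from uniqueness in Proposition \ref{involutive}, since both $\widehat\TFF$ and the current built from $\TAK$ map to $\TFF$ under $\TR\mapsto\widetilde\TR$. One should also note, as in the proof of Lemma \ref{Ruminint=>FFint}, that once integral currents correspond, their boundaries automatically correspond and are again integral (this also settles the remark following Definition \ref{defintRumin}). The genuinely hard part of the whole correspondence — that horizontal Federer-Fleming integral currents are metric integral currents, proved by the random deformation theorem and Young's horizontal approximation — is not carried out here; Corollary \ref{integral} is just the act of stringing the three implications together once that black box is available, so I would keep the proof to a few lines that cite Lemma \ref{Ruminint=>FFint}, the metric$\Rightarrow$Rumin lemma, the forthcoming Section \ref{AK versus FF} result, and Propositions \ref{involutive}–\ref{normrumin}.
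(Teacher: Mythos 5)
Your proposal is correct and follows exactly the paper's route: the paper's own proof of Corollary \ref{integral} is the one-line remark that the missing link in the cycle Rumin $\Rightarrow$ Federer-Fleming $\Rightarrow$ Ambrosio-Kirchheim $\Rightarrow$ Rumin is supplied by Corollary \ref{intFF=>intmetric}, with Lemma \ref{Ruminint=>FFint} and the metric-to-Rumin lemma providing the other two arrows. Your extra bookkeeping step, identifying the Rumin integral current built from $\TAK$ with $\widehat\TFF$ via the bijectivity in Proposition \ref{involutive}, is sound and in fact spells out a detail the paper leaves implicit.
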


\begin{proof}
The missing link is provided by Corollary \ref{intFF=>intmetric}.
\end{proof}

\subsection{High dimensions}

In this section, it will be convenient to view currents on an open set $\mathcal{U}$ as differential forms $\alpha$ with distributional coefficients, acting on test forms $\omega$ via
\begin{align*}
\int_{\mathcal{U}}\alpha\wedge\omega.
\end{align*}
Since there are two settings, Federer-Fleming (see Notation \ref{notFF}) and Rumin currents, a specific notation is introduced for Rumin currents. 

\begin{notation}
Let $\mathcal{U}\subset\he{n}$ be an open set. For a Rumin differential form $\beta$ with distributional coefficients on $\mathcal{U}$, let $\Ru(\beta)$ denote the \Rumin current defined by
\begin{align*}
\Scal{\Ru(\beta)}{\gamma}:=\int_{\mathcal{U}}\beta\wedge\gamma.
\end{align*}
\end{notation}

Lemma \ref{dFF} has a Rumin version, which follows from the integration by parts formula for Rumin forms, \cite{BFTT},
\begin{align*}
\int_{\mathcal{U}}(d_c\beta)\wedge\gamma=(-1)^{\text{degree}(\beta)+1}\int_{\mathcal{U}}\beta\wedge(d_c\gamma).
\end{align*}
\begin{lemma}\label{dRu}
Let $0\le k \le 2n+1$. Let $\beta$ be a smooth Rumin form of degree $2n+1-k$ on an open set $\mathcal{U}$. Then
\begin{align*}
\partial_{\mathbb{H}}\Ru(\beta)=(-1)^{k}\Ru(d_c\beta).
\end{align*}
\end{lemma}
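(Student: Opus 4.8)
The plan is to compute $\partial_{\mathbb{H}}\Ru(\beta)$ directly from the definitions, using the integration by parts formula for Rumin forms stated just above (the one from \cite{BFTT}). Let $\beta$ be a smooth Rumin form of degree $2n+1-k$, so that $d_c\beta$ has degree $2n+2-k=2n+1-(k-1)$, and $\Ru(d_c\beta)$ is therefore a Rumin $(k-1)$-current, which is the correct dimension for $\partial_{\mathbb{H}}\Ru(\beta)$.

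First I would fix a test Rumin $(k-1)$-form $\gamma\in\mathcal D(\mathcal U,E_0^{k-1})$ and expand $\Scal{\partial_{\mathbb{H}}\Ru(\beta)}{\gamma}=\Scal{\Ru(\beta)}{d_c\gamma}=\int_{\mathcal U}\beta\wedge d_c\gamma$, using the definition of the Rumin boundary and of $\Ru$. Then I would apply the integration by parts identity $\int_{\mathcal U}(d_c\beta)\wedge\gamma=(-1)^{\deg(\beta)+1}\int_{\mathcal U}\beta\wedge(d_c\gamma)$; rearranging gives $\int_{\mathcal U}\beta\wedge d_c\gamma=(-1)^{\deg(\beta)+1}\int_{\mathcal U}(d_c\beta)\wedge\gamma=(-1)^{\deg(\beta)+1}\Scal{\Ru(d_c\beta)}{\gamma}$. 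Finally I would check that the sign $(-1)^{\deg(\beta)+1}$ with $\deg(\beta)=2n+1-k$ equals $(-1)^{k}$: indeed $2n+1-k+1=2n+2-k\equiv -k\equiv k\pmod 2$, so $(-1)^{\deg(\beta)+1}=(-1)^{k}$, giving $\partial_{\mathbb{H}}\Ru(\beta)=(-1)^{k}\Ru(d_c\beta)$ as claimed.

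This is the exact Rumin-complex analogue of the argument for Lemma \ref{dFF}, with the de Rham exterior derivative and the pairing $\int\alpha\wedge\omega$ replaced by $d_c$ and $\int\beta\wedge\gamma$ respectively; no genuine obstacle arises beyond bookkeeping of signs and degrees. The one small point worth verifying carefully is that the wedge product $\beta\wedge\gamma$ of a Rumin form of degree $2n+1-k$ with a Rumin form of degree $k$ lands in top degree and is integrable, which is immediate since $\beta$ is smooth and $\gamma$ is smooth compactly supported, and that the integration by parts formula of \cite{BFTT} applies in this mixed smooth/compactly-supported setting — again standard. So I expect the proof to be essentially a two-line computation, with the only care needed being the parity check on the sign.

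\begin{proof}
Let $\gamma\in\mathcal D(\mathcal U,E_0^{k-1})$. By the definitions of the Rumin boundary and of $\Ru$,
\begin{align*}
\Scal{\partial_{\mathbb{H}}\Ru(\beta)}{\gamma}=\Scal{\Ru(\beta)}{d_c\gamma}=\int_{\mathcal U}\beta\wedge d_c\gamma.
\end{align*}
The integration by parts formula for Rumin forms gives $\int_{\mathcal U}(d_c\beta)\wedge\gamma=(-1)^{\text{degree}(\beta)+1}\int_{\mathcal U}\beta\wedge(d_c\gamma)$, hence
\begin{align*}
\int_{\mathcal U}\beta\wedge d_c\gamma=(-1)^{\text{degree}(\beta)+1}\int_{\mathcal U}(d_c\beta)\wedge\gamma=(-1)^{\text{degree}(\beta)+1}\Scal{\Ru(d_c\beta)}{\gamma}.
\end{align*}
Since $\text{degree}(\beta)=2n+1-k$, we have $\text{degree}(\beta)+1=2n+2-k$, which has the same parity as $k$, so $(-1)^{\text{degree}(\beta)+1}=(-1)^k$. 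Therefore $\Scal{\partial_{\mathbb{H}}\Ru(\beta)}{\gamma}=(-1)^k\Scal{\Ru(d_c\beta)}{\gamma}$ for all $\gamma$, i.e. $\partial_{\mathbb{H}}\Ru(\beta)=(-1)^k\Ru(d_c\beta)$.
\end{proof}
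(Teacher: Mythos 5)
Your proof is correct and is exactly the argument the paper has in mind: the paper states Lemma \ref{dRu} as an immediate consequence of the integration by parts formula for Rumin forms from \cite{BFTT}, and your two-line computation (unwinding the definitions of $\partial_{\mathbb{H}}$ and $\Ru$, applying that formula, and checking that $(-1)^{2n+2-k}=(-1)^{k}$) simply fills in the details it leaves to the reader. The side points you flag — that $\beta\wedge d_c\gamma$ has top degree and that compact support of $\gamma$ makes the integration by parts valid — are handled correctly.
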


Both maps $\FF$ and $\Ru$ are bijections, so one can write
\begin{align*}
\FF \,d=(-1)^k \partial\,\FF,&\quad d\,\FF^{-1} =(-1)^k \FF^{-1}\,\partial,\\ 
\Ru \,d_c=(-1)^k \partial_{\mathbb{H}}\,\Ru,&\quad \Ru^{-1}\,\partial_{\mathbb{H}}=(-1)^k d_c\,\Ru^{-1}.
\end{align*}

\begin{proposition}\label{identificationcurrentshigh}
Let $n+1\leq k \leq 2n+1$. Every \Rumin $k$-current $\TR\in \mathcal D_{\he{},k}(\mathcal U)$
can be identified with an oblique Federer-Fleming $k$-current $\widetilde \TR\in \mathcal D_{k}(\mathcal 
U)$ by setting
\begin{equation*}
\widetilde \TR:=\FF\,\Pi_{E}\,\Ru^{-1}(\TR).
\end{equation*}
Moreover, $\mc M(\widetilde \TR\res\theta)\le \mc M_{\mathbb H}(\TR)$ and $\partial\widetilde \TR=\widetilde{\partial_{\he{}}\TR}$.

Conversely, if $\TFF \in\mathcal D_{k}(\mathcal U)$ is an oblique Federer-Fleming current, then $\TFF $ induces a \Rumin $k$-current
$\widehat \TFF \in \mathcal D_{\he{},k}(\mathcal U)$ by 
$$
\widehat \TFF :=\Ru\,\Pi_{E_0}\,\FF^{-1}(\TFF ).
$$
Then $\mc M_{\mathbb H}(\widehat \TFF )\le  \mc M(\TFF \res \theta)$ and $\partial_{\he{}}\widehat \TFF =\widehat{\partial \TFF }$.

\end{proposition}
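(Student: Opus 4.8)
The plan is to mimic the proof of Proposition \ref{identificationcurrents}, but working in the ``differential forms with distributional coefficients'' picture, so that the algebraic relations between $\Pi_E$, $\Pi_{E_0}$, $d$, $d_c$, $L$ and $\theta$ from Theorem \ref{main rumin new} and Proposition \ref{E0h} can be applied directly. First I would check that $\widetilde\TR:=\FF\,\Pi_E\,\Ru^{-1}(\TR)$ is a well-defined Federer-Fleming current: $\Ru^{-1}(\TR)$ is a Rumin form of degree $2n+1-k\le n$ with distributional coefficients, $\Pi_E$ is (by Theorem \ref{main rumin new} ii)) the differential operator $\mathrm{Id}-d_0^{-1}d_1$ of order $1$ in horizontal derivatives, hence maps distributional Rumin forms to distributional forms, and $\FF$ turns these into Federer-Fleming currents. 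Then I would verify obliqueness. Write $\beta:=\Ru^{-1}(\TR)\in E^{2n+1-k}$, so $\widetilde\TR=\FF(\Pi_E\beta)$, and $\Pi_E\beta\in E$ since $\Pi_E$ is a projector onto $E$. By Lemma \ref{check} applied to $\alpha=\Pi_E\beta$ (with $h=2n+1-k$, $h\le n$), $\alpha\in E$ is equivalent to $\FF(\alpha)\res(\theta\wedge(d\theta)^{n-h+1})=0$ and $\partial\FF(\alpha)\res(\theta\wedge(d\theta)^{n-h})=0$; rewriting $n-h+1=k-n$ and $n-h=k-n-1$ gives exactly that $\widetilde\TR$ and $\partial\widetilde\TR$ are co-Legendrian, i.e. $\widetilde\TR$ is oblique.

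Next, the boundary identity. Using Lemma \ref{dRu} and Lemma \ref{dFF} together with the fact that $\Pi_E$ is a chain map (Theorem \ref{main rumin new} i)), one computes
\begin{align*}
\partial\widetilde\TR&=\partial\FF(\Pi_E\beta)=(-1)^{k+1}\FF(d\,\Pi_E\beta)=(-1)^{k+1}\FF(\Pi_E\,d\beta).
\end{align*}
On the other hand $\Ru^{-1}(\partial_{\mathbb H}\TR)=(-1)^{k+1}d_c\,\Ru^{-1}(\TR)=(-1)^{k+1}d_c\beta$ (adjusting the sign/degree bookkeeping from the displayed relations after Lemma \ref{dRu}, with $\deg\beta=2n-k$), and one must check $\Pi_E(d\beta)=\Pi_{E_0}d\,\Pi_E\beta=\ldots$ matches $d_c\beta$ after applying $\Pi_E$; here the key point is $\beta\in E$ so $\Pi_E\beta=\beta$ when $\beta$ is already a section of $E$ — but $\Ru^{-1}(\TR)$ is a section of $E_0$, not $E$, so one uses $\Pi_E$ to pass $E_0\to E$ and $\Pi_{E_0}$ to pass back, exactly as in the low-dimensional proof, invoking $\Pi_{E_0}\Pi_E\Pi_{E_0}=\Pi_{E_0}$ and $\Pi_E\Pi_{E_0}\Pi_E=\Pi_E$. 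The upshot is $\partial\widetilde\TR=\widetilde{\partial_{\mathbb H}\TR}$, with the sign conventions of $\FF$, $\Ru$ conspiring to cancel.

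The mass estimate is the genuinely new ingredient and, I expect, the main obstacle. One wants $\mc M(\widetilde\TR\res\theta)\le\mc M_{\mathbb H}(\TR)$. The point is that for $h=2n+1-k\le n$ and $\alpha\in E^h$ with distributional coefficients, the vertical part of $\alpha$, i.e. $d_0^{-1}d_1$-correction, is what survives after $\res\theta$: by Theorem \ref{main rumin new} ii), $\Pi_E\gamma-\gamma$ is vertical (divisible by $\theta$), and Lemma \ref{Hodge}(4) / Proposition \ref{E0h} let one write $\FF(\alpha)\res\theta$ in terms of the Rumin data $\Pi_{E_0}\alpha$. Concretely, I would test $\widetilde\TR\res\theta$ against a test form $\omega$ of degree $k-1$: $\Scal{\widetilde\TR\res\theta}{\omega}=\Scal{\FF(\Pi_E\beta)}{\theta\wedge\omega}=\int_{\mc U}\Pi_E\beta\wedge\theta\wedge\omega$, and using that $\beta=\Ru^{-1}(\TR)$ is a section of $E_0$ and Lemma \ref{Hodge}(4) to move $\theta$ onto $\beta$, this becomes (up to sign) $\Scal{\TR}{\Pi_{E_0}(\text{something built from }\theta\wedge\omega)}$ with the ``something'' controlled in sup-norm by $\|\omega\|_\infty$ times a geometric constant equal to $1$ in the right normalization. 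Bounding the $L^\infty$ norm of the relevant Rumin-form-valued expression by $\|\omega\|_\infty$ is the delicate estimate; it is the high-dimensional analogue of the inequality $\|\Pi_{E_0}\omega\|\le\|\omega\|$ used in Proposition \ref{identificationcurrents}, and it uses that $\Pi_{E_0}$ is an \emph{orthogonal} projector. The converse direction — $\widehat\TFF:=\Ru\,\Pi_{E_0}\,\FF^{-1}(\TFF)$, with $\mc M_{\mathbb H}(\widehat\TFF)\le\mc M(\TFF\res\theta)$ and $\partial_{\mathbb H}\widehat\TFF=\widehat{\partial\TFF}$ — is handled symmetrically: obliqueness of $\TFF$ (via Lemma \ref{check}) is exactly what guarantees $\Pi_{E_0}\FF^{-1}(\TFF)$ lands in $E_0$ so that $\Ru$ of it makes sense, $\Pi_{E_0}$ being a chain map on the relevant subcomplex (Theorem \ref{main rumin new} iii), iv)) gives the boundary identity, and the mass bound again reduces to $\Pi_{E_0}$ being norm-nonincreasing, with the $\res\theta$ appearing because in degrees $>n$ the Rumin form $E_0^{2n+1-k}$ sits inside $\theta\wedge\covh{\cdot}$.
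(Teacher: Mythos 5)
Your proposal follows the paper's own route: represent both kinds of currents by forms with distributional coefficients, obtain obliqueness of $\widetilde \TR$ from Lemma \ref{check}, get both boundary identities from the algebra $\Pi_E\Pi_{E_0}\Pi_E=\Pi_E$, $\Pi_{E_0}\Pi_E\Pi_{E_0}=\Pi_{E_0}$ together with $d\Pi_E=\Pi_E d$, and prove the mass bounds by reducing the pairing to a Rumin test form and invoking orthogonality of $\Pi_{E_0}$. In outline this is exactly the paper's argument and it is correct; only the places where your sketch misstates the mechanism need comment.

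In the mass estimate your framing sentence is backwards: after $\res\theta$ it is the \emph{horizontal} part of $\Pi_E\beta$, namely $\beta$ itself, that survives (the correction $\Pi_E\beta-\beta$ is a multiple of $\theta$, hence killed by wedging with $\theta$). Also, the step that turns $\Scal{\widetilde \TR\res\theta}{\omega}$ into a pairing of $\TR$ with a Rumin test form is not Lemma \ref{Hodge}(4); what is needed is the splitting of Proposition \ref{E0h}: write $\theta\wedge\omega=\theta\wedge(d\theta)^{k-n}\wedge\omega_1+\theta\wedge\omega_2$ orthogonally with $\omega_2\in\ker(L)$, check that the first piece contributes nothing — either because $\widetilde \TR$ is co-Legendrian, as the paper argues, or pointwise because $\beta\in E_0^{h}=\covh{h}\cap\ker(L^{n-h+1})$ with $n-h+1=k-n$ — and note that $\theta\wedge\omega_2$ is a Rumin test form with $|\theta\wedge\omega_2|\le|\omega|$, which is where the constant $1$ comes from. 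Similarly, in the converse direction obliqueness is not what makes $\Pi_{E_0}\FF^{-1}(\TFF)$ land in $E_0$ (the $\Pi_{E_0}$-image of any form does); it is used only for the boundary identity, through $\Pi_E\FF^{-1}(\TFF)=\FF^{-1}(\TFF)$, while the mass bound comes from writing a Rumin test $k$-form as $\gamma=\theta\wedge\gamma'$ with $d\theta\wedge\gamma'=0$ and $|\gamma'|=|\gamma|$, so that $\Scal{\widehat \TFF}{\gamma}=\Scal{\TFF\res\theta}{\gamma'}$. The paper establishes both estimates first for smooth representatives and then passes to general currents by weak approximation; your purely distributional route also works because all the identities used are coefficientwise, but one of these two devices should be made explicit. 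Your sign and degree slips ($(-1)^{k+1}$, $\deg\beta=2n-k$, ``$E_0^{2n+1-k}$ sits inside $\theta\wedge\covh{\cdot}$'') are harmless bookkeeping errors that cancel out, as you anticipate.
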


\begin{proof}
Let $h=2n+1-k\le n$. Let us assume first that $\beta:=\Ru^{-1}(\TR)$ is smooth. Then $\beta$ can be viewed as a horizontal $h$-form and $\Pi_E(\beta)-\beta$ is vertical. Hence, for every $\omega\in\mc D_{k-1}(\mc U)$,
\begin{align*}
\Scal{\widetilde \TR\res\theta}{\omega}
=\Scal{\Pi_E(\beta)}{\theta\wedge\omega}=\int_{\mathcal{U}}\Pi_E(\beta)\wedge\theta\wedge\omega=\int_{\mathcal{U}}\beta\wedge\theta\wedge\omega.
\end{align*}

Let $\Lambda$ denote the adjoint of the operator $L$ of multiplication with $d\theta$ on $\covh{\cdot}$ (Lefschetz operators). Recall that 
\begin{align*}
\mathcal{R}(L^{k-n+1})=\mathcal{R}(L^{n-h})=\mathcal{R}(\Lambda),
\end{align*}
whose orthogonal is $\ker(L)$. Thus, for every test form $\omega\in\mc D_k(\mc U)$, $\theta\wedge\omega$ splits orthogonally as
\begin{align*}
\theta\wedge\omega=\theta\wedge(d\theta)^{k-n+1}\wedge\omega_1+\theta\wedge\omega_2,
\end{align*}
where $\omega_2\in\ker(L)$ and
\begin{align*}
|\theta\wedge\omega_2|\le|\omega|.
\end{align*}
Note that since $L(\omega_2)=0$, $\theta\wedge\omega_2\in\mc D(\mc U,E_0^{k-1})$. According to Lemma \ref{check}, $\widetilde \TR$ is vertical, and therefore satisfies $\widetilde \TR \res (\theta\wedge (d\theta)^{k-n+1})=0$. It follows that
\begin{align*}
\Scal{\widetilde \TR\res\theta}{\omega}
&=\Scal{\widetilde \TR\res\theta}{\omega_2}
=\int_{\mathcal{U}}\beta\wedge\theta\wedge\omega_2\\
&=\Scal{\TR}{\theta\wedge\omega_2}\le \mc M_{\mathbb{H}}(\TR)\|\omega\|_{\infty}.
\end{align*}
Since every Rumin current $\TR$ is a weak limit of smooth Rumin currents $T_j$ with $M_{\mathbb{H}}(T_j)\le M_{\mathbb{H}}(\TR)$, this inegality persists for every Rumin current $\TR$, showing that
\begin{align*}
\mc M(\widetilde \TR\wedge\theta)\le \mc M_{\mathbb{H}}(\TR).
\end{align*}
The chain complex identity is straightforward,
\begin{align*}
\partial\widetilde \TR&=\partial\,\FF\,\Pi_E\,\Ru^{-1}(\TR)=(-1)^k \FF\,d\,\Pi_E\,\Ru^{-1}(\TR)\\
&=(-1)^k \FF\,\Pi_E\,d\,\Ru^{-1}(\TR)=\FF\,\Pi_E\,\Ru^{-1}(\partial_{\mathbb{H}}\TR)=\widetilde{\partial_{\mathbb{H}}\TR}.
\end{align*}

Conversely, let $\TFF =\FF(\alpha)$ be a smooth Federer-Fleming $k$-current. According to Proposition \ref{E0h}, since $h=2n+1-k\le n$, there exist differential forms $\beta'$ and $\gamma'$ such that
\begin{align*}
\alpha=\Pi_{E_0}\alpha+\alpha'\wedge\theta+\beta'\wedge d\theta.
\end{align*}
If $\gamma\in\mc D(\mc U,E_0^k)$, then $\gamma=\theta\wedge\gamma'$ where $\gamma'\in\mc D_{k-1}(\mc U)$ satisfies $d\theta\wedge\gamma'=0$ and $|\gamma'|=|\gamma|$. Hence
\begin{align*}
\Scal{\widehat \TFF }{\gamma}
&=\int_{\mathcal{U}}(\Pi_{E_0}\alpha)\wedge\gamma
=\int_{\mathcal{U}}\alpha\wedge\gamma
=\Scal{\TFF }{\gamma}\\
&=\Scal{\TFF \res\theta}{\gamma'}
\le M(\TFF \res\theta)\|\gamma\|_{L^{\infty}(\mathcal{U},E_0^k)}.
\end{align*}
By weak approximation of arbitrary Federer-Fleming currents of finite mass with smooth ones, this estimate persists for every current $\TFF $, hence
\begin{align*}
\mc M_{\mathbb{H}}(\widehat \TFF )\le \mc M(\TFF \res\theta).
\end{align*}

When $\TFF $ is an oblique current, the differential form $\FF^{-1}(\TFF )$ belongs to $E$, so $\FF^{-1}(\TFF )=\Pi_E \FF^{-1}(\TFF )$. Therefore
\begin{align*}
\partial_{\mathbb{H}}\widehat \TFF &=\partial_{\mathbb{H}}\,\Ru\,\Pi_{E_0}\,\FF^{-1}(\TFF )=(-1)^k \Ru\,d_c\,\Pi_{E_0}\,\FF^{-1}(\TFF )\\
&=(-1)^k \Ru\,\Pi_{E_0}\,d\,\Pi_{E}\,\Pi_{E_0}\,\Pi_E \, \FF^{-1}(\TFF )\\
&=(-1)^k \Ru\,\Pi_{E_0}\,d\,\Pi_E \, \FF^{-1}(\TFF )\\
&=(-1)^k \Ru\,\Pi_E\,d\,\FF^{-1}(T)=\Ru\,\Pi_{E_0}\,\FF^{-1}(\partial T)=\widehat{\partial \TFF}.
\end{align*}

\end{proof}

\begin{proposition}\label{involutivehigh} 
If $n+1\le k\le 2n+1$ and $\TR\in\mathcal D_{\he{},k}(\mathcal U)$ is a Rumin current, then
$$
\widehat {\widetilde \TR} =\TR.
$$
Conversely, if $\TFF \in \mathcal D_{k}(\mathcal U)$ is oblique, then
$$
\widetilde{\widehat{\TFF }} = \TFF .
$$

\end{proposition}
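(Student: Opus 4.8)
The plan is to unwind the definitions $\widetilde\TR=\FF\,\Pi_E\,\Ru^{-1}(\TR)$ and $\widehat\TFF=\Ru\,\Pi_{E_0}\,\FF^{-1}(\TFF)$ from Proposition \ref{identificationcurrentshigh} and reduce everything to the projector identities of Theorem \ref{main rumin new} iv), using that $\FF$ and $\Ru$ are bijections on forms with distributional coefficients. First I would compose: $\widehat{\widetilde\TR}=\Ru\,\Pi_{E_0}\,\FF^{-1}\FF\,\Pi_E\,\Ru^{-1}(\TR)=\Ru\,\Pi_{E_0}\Pi_E\,\Ru^{-1}(\TR)$. Writing $\beta=\Ru^{-1}(\TR)$, a distributional section of $E_0^h$ with $h=2n+1-k\le n$, one has $\Pi_{E_0}\beta=\beta$ since $\Pi_{E_0}$ is the algebraic projector onto $E_0$; then $\Pi_{E_0}\Pi_E\beta=\Pi_{E_0}\Pi_E\Pi_{E_0}\beta=\Pi_{E_0}\beta=\beta$ by Theorem \ref{main rumin new} iv), so $\widehat{\widetilde\TR}=\Ru(\beta)=\TR$.

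For the converse I would argue symmetrically: $\widetilde{\widehat\TFF}=\FF\,\Pi_E\,\Ru^{-1}\Ru\,\Pi_{E_0}\,\FF^{-1}(\TFF)=\FF\,\Pi_E\Pi_{E_0}\,\FF^{-1}(\TFF)$. Setting $\alpha=\FF^{-1}(\TFF)$, the key input is that obliqueness of $\TFF$ forces $\alpha\in E$: indeed the defining relations $\TFF\res(\theta\wedge(d\theta)^{k-n})=0$ and $\partial\TFF\res(\theta\wedge(d\theta)^{k-n-1})=0$ are, for $h=2n+1-k$, exactly the equations of Lemma \ref{check} characterizing membership in $E$. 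Hence $\Pi_E\alpha=\alpha$, and $\Pi_E\Pi_{E_0}\alpha=\Pi_E\Pi_{E_0}\Pi_E\alpha=\Pi_E\alpha=\alpha$ again by Theorem \ref{main rumin new} iv), giving $\widetilde{\widehat\TFF}=\FF(\alpha)=\TFF$.

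The computation itself is purely formal; the only point that needs a word of justification — and the mild obstacle — is that the operators involved act on forms with merely distributional coefficients. This is fine: $\Pi_{E_0}$ is algebraic (a pointwise orthogonal projector, constant in the left-invariant coframe), $\Pi_E=\mathrm{Id}-d_0^{-1}d_1$ in degrees $\le n$ is a differential operator with smooth left-invariant coefficients, and $\FF,\Ru$ are bijections on distributional forms as recalled before Proposition \ref{identificationcurrentshigh}. Since the identities of Theorem \ref{main rumin new} iv) are pointwise-algebraic, they persist on distributional sections, so all the cancellations above are legitimate; one should likewise note that the equivalence in Lemma \ref{check}, stated for smooth $\alpha$, extends verbatim to distributional $\alpha$, every condition there being a pairing against smooth compactly supported test forms.
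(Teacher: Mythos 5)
Your argument is correct and is essentially the paper's own proof: the same unwinding of $\widetilde{\TR}=\FF\,\Pi_E\,\Ru^{-1}(\TR)$ and $\widehat{\TFF}=\Ru\,\Pi_{E_0}\,\FF^{-1}(\TFF)$, the same use of $\Pi_{E_0}\beta=\beta$ for Rumin forms and of $\FF^{-1}(\TFF)\in E$ for oblique currents (via Lemma \ref{check}), and the same cancellations $\Pi_{E_0}\Pi_E\Pi_{E_0}=\Pi_{E_0}$, $\Pi_E\Pi_{E_0}\Pi_E=\Pi_E$ from Theorem \ref{main rumin new} iv). Your explicit remark that the projector identities and the characterization of $E$ extend to forms with distributional coefficients (by pairing against smooth compactly supported test forms) only spells out what the paper leaves implicit, so there is nothing to change.
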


\begin{proof} 
Since $\Ru^{-1}(\TR)$ is a Rumin form, $\Pi_{E_0}\Ru^{-1}(\TR)=\Ru^{-1}(\TR)$. Thus
\begin{align*}
\widehat{\widetilde \TR}
&= \Ru\,\Pi_{E_0}\,\FF^{-1}\,\FF\,\Pi_{E}
\,\Ru^{-1}(\TR)
=\Ru\,\Pi_{E_0}\,\Pi_{E}
\,\Ru^{-1}(\TR)\\
&=\Ru\,\Pi_{E_0}\,\Pi_{E}
\,\Pi_{E_0}\,\Ru^{-1}(\TR)
=\Ru\,\Pi_{E_0}\,\Ru^{-1}(\TR)\\
&=\Ru\,\Ru^{-1}(\TR)=\TR.
\end{align*}

Conversely, if $\TFF $ is a vertical current, the differential form $\FF^{-1}(T)$ belongs to $E$, so $\Pi_E \FF^{-1}(T)=\FF^{-1}(T)$. Thus
\begin{align*}
\widetilde{\widehat{\TFF }}
&= \FF\,\Pi_{E}\,\Ru^{-1}\,\Ru\,\Pi_{E_0}
\,\FF^{-1}(\TFF )
= \FF\,\Pi_{E}\,\Pi_{E_0}
\,\FF^{-1}(\TFF )\\
&= \FF\,\Pi_{E}\,\Pi_{E_0}
\,\Pi_{E}\,\FF^{-1}(\TFF )
= \FF\,\Pi_{E}\,\FF^{-1}(\TFF )\\
&=\FF\,\FF^{-1}(\TFF )=\TFF .
\end{align*}
\end{proof}

Our next goal is to see what the procedure $\TR\mapsto \widehat{\TR}$ does on a current of integration on a smooth submanifold.

\begin{notation} \label{notBB}
Let $\BB$ denote the operator on Federer-Fleming currents that maps $T$ to the current
\begin{align*}
\BB(T):\omega\mapsto \Scal{T}{d_0^{-1}\omega}.
\end{align*}
\end{notation}

\begin{lemma} \label{PiEcurrents}
Let $T=\FF(\alpha)$ be a Federer-Fleming current. The oblique current $\bar T:=\FF(\Pi_E(\alpha))$ is given by
\begin{align*}
\bar T=T-\partial\BB(T)-\BB(\partial T).
\end{align*}
\end{lemma}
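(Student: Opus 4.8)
The plan is to translate the operator identity $\Pi_E = \mathrm{Id} - d_0^{-1}d - d\,d_0^{-1}$ into the language of currents by dualizing against test forms. Recall from the definition of the oblique subcomplex (just before Theorem \ref{main rumin new}) that $\Pi_E$, applied to a differential form with distributional coefficients, is $\Pi_E(\alpha) = \alpha - d_0^{-1}(d\alpha) - d(d_0^{-1}\alpha)$. So $\FF(\Pi_E(\alpha)) = \FF(\alpha) - \FF(d_0^{-1}d\alpha) - \FF(d\,d_0^{-1}\alpha)$, and the task is to identify each of the last two currents with $\BB(\partial T)$ and $\partial\BB(T)$ respectively.

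First I would pin down the algebraic adjunction behind $\BB$. By Notation \ref{notBB}, $\Scal{\BB(S)}{\omega} = \Scal{S}{d_0^{-1}\omega}$; combined with $\Scal{\FF(\gamma)}{\omega} = \int_{\mathcal U}\gamma\wedge\omega$, this means $\BB(\FF(\gamma))$ is the current $\omega\mapsto\int_{\mathcal U}\gamma\wedge d_0^{-1}\omega$. The key pointwise identity is part (4) of Lemma \ref{Hodge}: for covectors of complementary degrees, $(d_0^{-1}\gamma)\wedge\omega = (-1)^{\deg\gamma}\gamma\wedge d_0^{-1}\omega$. Applying this under the integral sign gives $\int \gamma\wedge d_0^{-1}\omega = (-1)^{\deg(d_0^{-1}\gamma)}\int (d_0^{-1}\gamma)\wedge\omega$, i.e. $\BB(\FF(\gamma)) = \pm\FF(d_0^{-1}\gamma)$ with the sign controlled by degrees. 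This is exactly the mechanism that lets the $d_0^{-1}$ that is \emph{applied to $\alpha$} (in $\FF(d\,d_0^{-1}\alpha)$) get converted into the $d_0^{-1}$ that is \emph{applied to the test form} (in $\BB$). Here $\deg(d_0^{-1}\alpha) = \deg\alpha - 1$ because $d_0$ raises degree by one, and $\deg\alpha = 2n+1 - k$ where $k$ is the dimension of $T$.

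Next I would handle the $d$'s using the two integration-by-parts facts already in the excerpt: Lemma \ref{dFF}, $\partial\FF(\alpha) = (-1)^k\FF(d\alpha)$ for $\alpha$ of degree $2n+1-k$, and the pointwise Leibniz rule $d(\phi\wedge\omega) = (d\phi)\wedge\omega + (-1)^{\deg\phi}\phi\wedge(d\omega)$ quoted in Section \ref{defhoriz}, which is what converts $\partial$ acting on $\BB(T)$ back into $d$ acting inside $\FF$. Concretely: $\FF(d\,d_0^{-1}\alpha)$ is, up to the sign from Lemma \ref{dFF} applied to the form $d_0^{-1}\alpha$ of degree $2n-k$, equal to $\pm\partial\FF(d_0^{-1}\alpha) = \pm\partial\BB(\FF(\alpha)) = \pm\partial\BB(T)$, using the adjunction from the previous paragraph. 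Similarly, $\FF(d_0^{-1}d\alpha) = \BB(\FF(d\alpha))$ up to sign (adjunction again, now for $d\alpha$ of degree $2n+2-k$), and $\FF(d\alpha) = \pm\partial\FF(\alpha) = \pm\partial T$ by Lemma \ref{dFF}, so this term is $\pm\BB(\partial T)$. Assembling, $\bar T = T - \partial\BB(T) - \BB(\partial T)$ once all signs cancel.

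The main obstacle is purely bookkeeping: keeping the signs $(-1)^{\deg}$ straight through four applications of parity rules (two instances of Lemma \ref{Hodge}(4) and two of Lemma \ref{dFF}, plus the Leibniz rule), since the degrees of $\alpha$, $d\alpha$, $d_0^{-1}\alpha$, $d_0^{-1}d\alpha$ all differ. I would organize this by fixing $\deg\alpha = 2n+1-k$ once and for all and recording, in a short table, the degree and hence the sign produced at each step; one checks the two "correction" terms each acquire exactly the sign needed to match $-\partial\BB(T)$ and $-\BB(\partial T)$. A secondary point worth a sentence: the identity $\Pi_E(\alpha) = \alpha - d_0^{-1}d\alpha - d\,d_0^{-1}\alpha$ is stated in the excerpt for the idempotent $\Pi_E = \Pi$ on forms, and it continues to make sense for $\alpha$ with distributional coefficients because $d_0^{-1}$ is algebraic (pointwise linear) and $d$ acts on distributional forms; no regularity of $\alpha$ is needed, so the computation is valid for an arbitrary Federer-Fleming current $T = \FF(\alpha)$. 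Finally, that $\bar T$ is oblique is not something to reprove here — it is immediate from $\Pi_E(\alpha)\in E$ together with the characterization of oblique currents via Lemma \ref{check}, or may simply be cited.
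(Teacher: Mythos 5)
Your proposal is correct and follows essentially the same route as the paper's proof: expand $\Pi_E=Id-d_0^{-1}d-dd_0^{-1}$ inside $\FF$, use Lemma \ref{Hodge}(4) to convert $\FF(d_0^{-1}\,\cdot)$ into $\BB(\FF(\cdot))$ up to sign, and use Lemma \ref{dFF} to trade $d$ for $\partial$, checking that the signs cancel (they do, since the relevant exponents $h+1+k=2n+2$ and $h+k-1=2n$ are even, $h=2n+1-k$). The only caveat is a harmless slip in your intermediate adjunction formula, where the sign should be $(-1)^{\deg\gamma}$ rather than $(-1)^{\deg(d_0^{-1}\gamma)}$; since you defer the sign bookkeeping to the final tally and the totals come out even, the conclusion is unaffected.
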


\begin{proof}
Let $k=\mathrm{dim}(T)$ and $h=2n+1-k=\mathrm{degree}(\alpha)$. By definition, $\Pi_E=Id-d_0^{-1}d-dd_0^{-1}$.
According to Lemma \ref{Hodge} (4), 
\begin{align*}
\FF(d_0^{-1}\alpha)=(-1)^{h}\BB(\FF(\alpha)=(-1)^{h}\BB(T),
\end{align*}
hence, with Lemma \ref{dFF},
\begin{align*}
\FF(d_0^{-1}d\alpha)&=(-1)^{h+1}\BB(\FF(d\alpha))=(-1)^{h+1+k}\BB(\partial \FF(\alpha))=\BB(\partial T),\\
\FF(dd_0^{-1}\alpha)&=(-1)^{k-1}\partial\FF(d_0^{-1}\alpha))=(-1)^{k-1+h}\partial\BB( \FF(\alpha))=\partial\BB(T).
\end{align*}
Hence
\begin{align*}
\bar T
&=\FF(\Pi_E \alpha)
=\FF(\alpha)-\FF(d_0^{-1}d\alpha)-\FF(dd_0^{-1}\alpha))\\
&=T-\partial\BB(T)-\BB(\partial T).
\end{align*}
\end{proof}

\begin{example} \label{exoblique}
Let $k=n+1,\ldots,2n+1$, let $h=2n+1-k$. Let $f:\mc U\to \R^h$ be a smooth map whose differential restricted to the contact hyperplane is onto. Let $\Phi:\R^h\to\R$ be a smooth function. Consider the averaged Rumin current of integration defined on $\mc D(\mc U,E_0^k)$, by
\begin{align*}
T_{R,f,\Phi}:\gamma \mapsto \int_{\R^h}\left(\int_{f^{-1}(y)}\gamma\right)\,\Phi(y)\,dy.
\end{align*}
Then 
  \begin{align*}
\widehat{T_{R,f,\Phi}}=T_{f,\Phi}-\partial \BB(T_{f,\Phi}),
\end{align*}
  where $T_{f,\Phi}$ is the averaged Federer-Fleming current of integration on level-sets of $f$ (see Example \ref{excolegendrian}), and the operator $\BB$ is defined in Notation \ref{notBB}.
\end{example}

Letting $\Phi$ approach a Dirac mass, one gets an expression for the oblique version of the Rumin current of integration along a smooth submanifold without boundary, transverse to the contact structure.

\subsection{The oblique mass}

The oblique mass makes sense for all Federer-Fleming currents in all dimensions. It is especially meaningful for oblique currents.

\begin{definition}\label{defvertmass}
The \emph{oblique mass} of a Federer-Fleming current $\TFF $ is 
$$
\mc M(\TFF \res \theta).
$$
\end{definition}

\begin{corollary}\label{normhigh}
In dimensions $k+1\le k\le 2n+1$, under the maps $\TR\mapsto \widetilde \TR$ and $\TFF \mapsto \widehat \TFF $ from Rumin to oblique Federer-Fleming currents, the Rumin mass corresponds exactly to the oblique mass.
\end{corollary}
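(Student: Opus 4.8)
The plan is to repeat, almost verbatim, the argument of Corollary~\ref{normrumin}, now invoking Propositions~\ref{identificationcurrentshigh} and~\ref{involutivehigh} in place of Propositions~\ref{identificationcurrents} and~\ref{involutive}. (The hypothesis printed as ``dimensions $k+1\le k\le 2n+1$'' is of course to be read as ``dimensions $n+1\le k\le 2n+1$''.) The mechanism is the standard one: a pair of mutually inverse, mass-nonincreasing maps is automatically a pair of mass isometries.

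First I would treat a Rumin current $\TR\in\mathcal D_{\he{},k}(\mathcal U)$, $n+1\le k\le 2n+1$. The first half of Proposition~\ref{identificationcurrentshigh} tells us that $\widetilde\TR$ is oblique and $\mc M(\widetilde\TR\res\theta)\le\mc M_{\mathbb H}(\TR)$; applying the second half of that proposition to the oblique current $\widetilde\TR$ yields $\mc M_{\mathbb H}(\widehat{\widetilde\TR})\le\mc M(\widetilde\TR\res\theta)$. Since $\widehat{\widetilde\TR}=\TR$ by Proposition~\ref{involutivehigh}, these chain together as
\[
\mc M_{\mathbb H}(\TR)=\mc M_{\mathbb H}(\widehat{\widetilde\TR})\le\mc M(\widetilde\TR\res\theta)\le\mc M_{\mathbb H}(\TR),
\]
forcing $\mc M(\widetilde\TR\res\theta)=\mc M_{\mathbb H}(\TR)$, i.e. the map $\TR\mapsto\widetilde\TR$ carries Rumin mass to oblique mass exactly.

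Symmetrically, for an oblique Federer--Fleming current $\TFF\in\mathcal D_k(\mathcal U)$, the second half of Proposition~\ref{identificationcurrentshigh} gives $\mc M_{\mathbb H}(\widehat\TFF)\le\mc M(\TFF\res\theta)$, while its first half applied to the Rumin current $\widehat\TFF$ gives $\mc M(\widetilde{\widehat\TFF}\res\theta)\le\mc M_{\mathbb H}(\widehat\TFF)$; with $\widetilde{\widehat\TFF}=\TFF$ from Proposition~\ref{involutivehigh} this chains as
\[
\mc M(\TFF\res\theta)=\mc M(\widetilde{\widehat\TFF}\res\theta)\le\mc M_{\mathbb H}(\widehat\TFF)\le\mc M(\TFF\res\theta),
\]
so $\mc M_{\mathbb H}(\widehat\TFF)=\mc M(\TFF\res\theta)$. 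If desired, one adds that, since the two maps commute with the boundary operators (again Proposition~\ref{identificationcurrentshigh}), Rumin normal currents correspond precisely to oblique Federer--Fleming currents $\TFF$ with $\mc M(\TFF\res\theta)+\mc M((\partial\TFF)\res\theta)<\infty$, and flat currents likewise correspond.

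I do not expect any genuine obstacle: all the analytic work (the mass estimates and the involutivity identities) has already been carried out in Propositions~\ref{identificationcurrentshigh} and~\ref{involutivehigh}, and the corollary is the purely formal consequence that two inverse mass-decreasing bijections preserve mass. The only point requiring a word of care is notational—keeping straight that ``oblique mass'' means $\mc M(\,\cdot\res\theta)$—and, if the normal-current statement is included, noting that when $k=n+1$ the boundary lands in dimension $n$, where the relevant identification is the one of Corollary~\ref{normrumin} rather than the high-dimensional one.
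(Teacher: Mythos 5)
Your proof is correct and is essentially identical to the paper's: the paper also obtains the exact correspondence of masses from the two chains of inequalities combining the mass bounds of Proposition \ref{identificationcurrentshigh} with the involutivity identities of Proposition \ref{involutivehigh}. Your reading of the misprinted dimension range as $n+1\le k\le 2n+1$ matches the intended statement.
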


\begin{proof}
This follows from the chains of inequalities
\begin{align*}
\mc M_{\mathbb{H}}(\TR)&=\mc M_{\mathbb{H}}(\widehat{\widetilde \TR})\le \mc M(\widetilde \TR\res\theta)\le \mc M(\TR),\\
\mc M(\TFF \res\theta)&=\mc M(\widetilde{\widehat{\TFF }}\res\theta)\le \mc M_{\mathbb{H}}(\widehat \TFF )\le \mc M(\TFF \res\theta).
\end{align*}

\end{proof}

In Heisenberg groups, the oblique mass arises when one rescales a Federer-Fleming current using anisotropic dilations.

\begin{lemma}\label{rescale}
\label{mass}
Let $\TFF$ be a $k$-dimensional Federer-Fleming current on $\he{n}$. Then
\begin{align*}
\mc M((\mathfrak s_\lambda)_{\#}\TFF)\le \lambda^{k+1}\mc M(\TFF\res\theta)+\lambda^{k}\mc M(\TFF).
\end{align*}
\end{lemma}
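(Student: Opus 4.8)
The plan is to compute how the mass of a Federer-Fleming current behaves under the anisotropic dilation $\mathfrak s_\lambda$ by tracking separately the horizontal and vertical components of test forms. First I would recall that for a $k$-current $\TFF$ and a compactly supported $k$-form $\omega$,
\begin{align*}
\Scal{(\mathfrak s_\lambda)_{\#}\TFF}{\omega}=\Scal{\TFF}{\mathfrak s_\lambda^*\omega}.
\end{align*}
So the question reduces to estimating $\|\mathfrak s_\lambda^*\omega\|_\infty$ in terms of $\|\omega\|_\infty$, but with a refinement: the contribution of the vertical part of $\omega$ should be paired with $\TFF\res\theta$ rather than with all of $\TFF$.

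The key step is the weight decomposition. Write $\omega=\omega_H+\theta\wedge\omega_T$ where $\omega_H$ is horizontal of weight $k$ and $\theta\wedge\omega_T$ is vertical of weight $k+1$ (this is the splitting \eqref{dec-weights}). By Definition \ref{weight}, $\mathfrak s_\lambda^*\omega_H=\lambda^k\omega_H'$ and $\mathfrak s_\lambda^*(\theta\wedge\omega_T)=\lambda^{k+1}(\theta\wedge\omega_T)'$, where the primes denote the pulled-back forms — but one must be careful: $\mathfrak s_\lambda^*\omega_H$ need not remain horizontal pointwise, since $\mathfrak s_\lambda$ is not an isometry and only the \emph{left-invariant} decomposition is preserved by $\mathfrak s_\lambda^*$ up to scalars. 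Here I would instead argue more robustly: since $\mathfrak s_\lambda^*$ acts on the left-invariant covector $\theta$ by $\mathfrak s_\lambda^*\theta=\lambda^2\theta$ and on $dx_i,dy_i$ by multiplication by $\lambda$, and since $\omega$ has compactly supported but otherwise arbitrary coefficient functions, one expands $\omega$ in the left-invariant coframe $\{dx_i,dy_i,\theta\}$. A basis $k$-form $\omega_{i_1}\wedge\cdots\wedge\omega_{i_k}$ not involving $\theta$ is scaled by $\lambda^k$; one involving $\theta$ is scaled by $\lambda^{k+1}$. Crucially, the sup-norm of the coefficient functions is unaffected by precomposition with $\mathfrak s_\lambda$ (it only relocates where the sup is attained). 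Therefore $\mathfrak s_\lambda^*\omega=\lambda^k\eta_H+\lambda^{k+1}\theta\wedge\eta_T$ with $\|\eta_H\|_\infty\le\|\omega\|_\infty$ and $\|\theta\wedge\eta_T\|_\infty\le\|\omega\|_\infty$ (after absorbing the geometric constants coming from the orthonormality of the left-invariant coframe, which relates the pointwise comass/mass norm to the coefficient sup-norm up to a dimensional constant — I would either state the estimate up to such a constant, or normalize so that it is $1$).

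Putting this together, for $\|\omega\|_\infty\le 1$,
\begin{align*}
\Scal{(\mathfrak s_\lambda)_{\#}\TFF}{\omega}=\lambda^k\Scal{\TFF}{\eta_H}+\lambda^{k+1}\Scal{\TFF}{\theta\wedge\eta_T}=\lambda^k\Scal{\TFF}{\eta_H}+\lambda^{k+1}\Scal{\TFF\res\theta}{\eta_T},
\end{align*}
and taking suprema gives $\mc M((\mathfrak s_\lambda)_{\#}\TFF)\le\lambda^k\mc M(\TFF)+\lambda^{k+1}\mc M(\TFF\res\theta)$, which is the claim. The main obstacle — and the point requiring genuine care rather than routine computation — is the normalization issue: the mass $\mc M$ is defined via the \emph{Riemannian} comass of forms, so when one splits $\omega$ into horizontal and vertical parts in the left-invariant coframe and pulls back, one must verify that the norms of the two pieces are each bounded by $\|\omega\|_\infty$ (and not merely by a constant times it), or else state the inequality up to a multiplicative geometric constant. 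Since the left-invariant coframe $\{dx_i,dy_i,\theta\}$ is declared orthonormal, the horizontal/vertical splitting \eqref{dec-weights} is an \emph{orthogonal} splitting of $\covH k$, so in fact $\|\eta_H\|_\infty\le\|\omega\|_\infty$ and $\|\theta\wedge\eta_T\|_\infty\le\|\omega\|_\infty$ hold with constant $1$; I would spell out this orthogonality to make the constant-free inequality clean.
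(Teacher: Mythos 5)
Your argument is correct and is essentially the paper's own proof: split $\omega=\psi+\theta\wedge\phi$ into horizontal and vertical parts, note that $\mathfrak s_\lambda^*$ scales these by $\lambda^k$ and $\lambda^{k+1}$ respectively while only precomposing the coefficients (so sup-norms are preserved), pair the vertical piece with $\TFF\res\theta$, and use the orthogonality of the splitting to get the constant-free bound. The normalization point you flag is handled in the paper exactly as you suggest, via the pointwise identity $|\omega|^2=|\phi|^2+|\psi|^2$ coming from the orthonormal left-invariant coframe.
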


\begin{proof}
Let $\omega$ be a test $k$-form, expressed as
\begin{align*}
\omega=\theta\wedge\phi+\psi,
\end{align*}
with $\phi$ and $\psi$ horizontal, and $|\omega|^2=|\phi|^2+|\psi|^2$ pointwise. Then 
\begin{align*}
(\mathfrak s_\lambda)^*\omega=\lambda^{k+1}\theta\wedge\phi\circ \mathfrak s_\lambda+\lambda^{k}\psi\circ \mathfrak s_\lambda,
\end{align*}
where
\begin{align*}
\|\phi\circ \mathfrak s_\lambda\|_{\infty}=\|\phi\|_{\infty},\quad \|\psi\circ \mathfrak s_\lambda\|_{\infty}=\|\psi\|_{\infty}.
\end{align*}
So
\begin{align*}
|\Scal{(\mathfrak s_\lambda)_{\#}\TFF}{\omega }|
&=|\Scal{\TFF}{(\mathfrak s_\lambda)^*\omega }|\\
&\le |\Scal{\TFF\res\theta}{\phi\circ \mathfrak s_\lambda}|+|\Scal{\TFF}{\psi\circ \mathfrak s_\lambda }|\\
&\le \lambda^{k+1}\mc M(\TFF\res\theta)\|\phi\|_{\infty}+\lambda^{k}\mc M(\TFF)\|\psi\|_{\infty}\\
&\le (\lambda^{k+1}\mc M(\TFF\res\theta)+\lambda^{k}\mc M(\TFF))\|\omega\|_{\infty}.
\end{align*}
This shows that $\mc M((\mathfrak s_\lambda)_{\#}\TFF)\le \lambda^{k+1}\mc M(\TFF\res\theta)+\lambda^{k}\mc M(\TFF)$.
\end{proof}

To conclude this section, we give a geometric interpretation of the oblique mass for a current of integration on a $C^1$ submanifold. 

\begin{proposition}
1. In all dimensions, for $C^1$ submanifolds of contact manifolds, the oblique mass is a subRiemannian invariant: it depends only on the contact structure as a field of tangent hyperplanes and on the quadratic forms on hyperplanes. 

2. Viewed as a measure, the oblique mass restricted to subsets of a fixed $k$-dimensional $C^1$ submanifold has a continuous density with respect to the spherical Hausdorff measure $\mathcal{S}^{k+1}$. 

3. Unless it is horizontal, the current of integration $S$ on a $C^1$ submanifold satisfies
\begin{align*}
\mc M(S\res \theta)=\lim_{\lambda\to\infty}\lambda^{-k-1}\mc M((\mathfrak s_\lambda)_\# S).
\end{align*}

\end{proposition}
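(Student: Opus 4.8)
The plan is to reduce all three statements to a single formula for the oblique mass of a current of integration. Write $M$ for the $k$-dimensional $C^{1}$ submanifold carrying $S$, $\xi$ for its unit tangent $k$-vector field, and $\mathcal H^{k}$ for the Riemannian (= Euclidean) $k$-dimensional Hausdorff measure on $M$. From $\Scal{S\res\theta}{\omega}=\Scal{S}{\theta\wedge\omega}=\int_{M}\Scal{\omega}{\iota_{\theta}\xi}\,d\mathcal H^{k}$, together with the fact that the interior product $\iota_{\theta}\xi$ is a \emph{simple} $(k-1)$-vector (so its mass norm equals its Euclidean norm), one sees that $S\res\theta$ is the current of integration over $M$ with density $|\iota_{\theta}\xi|$, whence
\[
\mc M(S\res\theta)=\int_{M}|\iota_{\theta}\xi|\,d\mathcal H^{k}.
\]
At a point $p$ with $T_{p}M\subseteq H_{p}:=\ker\theta_{p}$ this density vanishes; where $T_{p}M\not\subseteq H_{p}$, put $L_{p}:=T_{p}M\cap H_{p}$ (of dimension $k-1$), let $N$ be the unit vector of $T_{p}M$ orthogonal to $L_{p}$, and write $\xi=\pm N\wedge\omega_{L_{p}}$ with $\omega_{L_{p}}$ a unit $(k-1)$-vector along $L_{p}$; then $\iota_{\theta}\xi=\pm\,\theta(N)\,\omega_{L_{p}}$, so $|\iota_{\theta}\xi|=|\theta(N)|$.

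For part 1 the plan is to recast $|\iota_{\theta}\xi|\,d\mathcal H^{k}$ as a density depending only on the contact hyperplane field $H$ and the quadratic form $g_{H}$. Decomposing the Riemannian $k$-volume of $T_{p}M$ along the orthogonal splitting $\R N\oplus L_{p}$ gives
\[
|\iota_{\theta}\xi|\,d\mathcal H^{k}=\bigl|\,\iota_{M}^{*}\theta\wedge\widetilde{\mathrm{vol}}_{L}\,\bigr|,
\]
where $\widetilde{\mathrm{vol}}_{L}$ is any $(k-1)$-form on $TM$ restricting on $L:=TM\cap H$ to the Riemannian volume form of $(L,g_{H}|_{L})$, and where the absolute value of a top-degree form on $M$ is read as a density. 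The wedge with $\iota_{M}^{*}\theta$ removes the ambiguity of the extension $\widetilde{\mathrm{vol}}_{L}$, because any $(k-1)$-form on $T_{p}M$ vanishing on $\bigwedge^{k-1}L_{p}$ is divisible by $N^{\flat}$, hence by $\iota_{M}^{*}\theta=\theta(N)\,N^{\flat}$. Thus $\mc M(S\res\theta)=\int_{M}|\iota_{M}^{*}\theta\wedge\widetilde{\mathrm{vol}}_{L}|$, an expression in which only $\iota_{M}^{*}\theta$ and the volume form of $(TM\cap H,g_{H})$ occur; and, under the compatibility of $g_{H}$ with the contact structure assumed throughout, $\theta$ is itself determined up to sign by $H$ and $g_{H}$ (normalize so that the associated almost complex structure is $g_{H}$-orthogonal; the standard Heisenberg $\theta$ already satisfies this). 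This yields the asserted subRiemannian invariance.

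For part 3, since $\mathfrak s_{\lambda}$ is a diffeomorphism, $\mc M((\mathfrak s_{\lambda})_{\#}S)=\int_{M}J_{\lambda}\,d\mathcal H^{k}$ with $J_{\lambda}(p)=|d\mathfrak s_{\lambda}(\xi_{p})|$, $d\mathfrak s_{\lambda}$ now denoting the induced action on $k$-vectors. In the left-invariant frame $d\mathfrak s_{\lambda}$ scales $\mathfrak h_{1}$ by $\lambda$ and $Z$ by $\lambda^{2}$; writing $\xi_{p}=\xi_{p}^{(Z)}+\xi_{p}^{(H)}$ for the orthogonal splitting into the $Z$-containing part and the purely horizontal part, one gets $d\mathfrak s_{\lambda}(\xi_{p})=\lambda^{k+1}\xi_{p}^{(Z)}+\lambda^{k}\xi_{p}^{(H)}$, hence
\[
\lambda^{-k-1}J_{\lambda}(p)=\bigl(|\xi_{p}^{(Z)}|^{2}+\lambda^{-2}|\xi_{p}^{(H)}|^{2}\bigr)^{1/2}\longrightarrow|\xi_{p}^{(Z)}|=|\iota_{\theta}\xi_{p}|\qquad(\lambda\to\infty),
\]
the last equality because $\iota_{\theta}\xi_{p}^{(H)}=0$ and $\iota_{\theta}(Z\wedge\eta)=\eta$ for horizontal $\eta$. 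This quantity is bounded by $|\iota_{\theta}\xi_{p}|+1\le 2$ for $\lambda\ge1$, an $\mathcal H^{k}$-integrable majorant (we may assume $\mc M(S)=\mathcal H^{k}(M)<\infty$, as is implicit for a current of integration); dominated convergence then gives $\lambda^{-k-1}\mc M((\mathfrak s_{\lambda})_{\#}S)\to\int_{M}|\iota_{\theta}\xi|\,d\mathcal H^{k}=\mc M(S\res\theta)$. When $M$ is everywhere horizontal the same computation makes both sides vanish, which is why that (uninformative) case is excluded.

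Part 2 is the substantial point. Both the oblique-mass measure $\mu$ (with $d\mu=|\iota_{\theta}\xi|\,d\mathcal H^{k}$ on $M$) and $\mathcal S^{k+1}|_{M}$ are carried by the open set $M^{*}:=\{p\in M:\,T_{p}M\not\subseteq H_{p}\}$: for $\mu$ this is clear from its density, while $\mathcal S^{k+1}|_{M\setminus M^{*}}=0$ since $M\setminus M^{*}$ is the characteristic set of $M$ (where $M$ is tangent to $H$, so the anisotropic blow-ups drop a dimension onto a horizontal $k$-plane), which is $\mathcal S^{k+1}$-negligible. On $M^{*}$ the plan is to invoke the blow-up analysis of $C^{1}$ submanifolds transverse to the contact structure in Carnot groups (see \cite{FSSC_advances} and work of V. Magnani): there $\mathcal S^{k+1}|_{M^{*}}=\sigma\,\mathcal H^{k}|_{M^{*}}$ with $\sigma$ a strictly positive \emph{continuous} metric factor, depending continuously on $p$ through the homogeneous tangent subgroup $V_{p}=L_{p}\oplus\R Z$ (and $p\mapsto L_{p}$ is continuous on $M^{*}$ since $M$ is $C^{1}$). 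Since $|\iota_{\theta}\xi|$ is continuous on $M^{*}$, the density $\rho:=|\iota_{\theta}\xi|/\sigma$ is continuous and $\ge 0$ there; it extends continuously by $0$ to $M\setminus M^{*}$, because near a point of $M\cap\partial M^{*}$ the numerator tends to $0$ while $L_{p}$ still converges to a $(k-1)$-plane, keeping $\sigma$ bounded below. Hence $\mu=\rho\,\mathcal S^{k+1}|_{M}$ with $\rho$ continuous on $M$, which is part 2. The main obstacle is precisely this input: controlling $\mathcal S^{k+1}$ on a $C^{1}$ submanifold, the negligibility of its characteristic set, and the continuity of its metric factor, are not elementary and must be imported from the theory of rectifiable sets in Carnot groups; parts 1 and 3 are short computations resting on the displayed identity and on Lemma~\ref{rescale}.
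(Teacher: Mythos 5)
Your parts 1 and 3 are correct and, in substance, the paper's own argument: your identity $\mc M(S\res\theta)=\int_M|\iota_\theta\xi|\,d\mathcal H^k$ is the paper's $\int_S|\sin\alpha|\,d\mathrm{area}$, your rewriting of the integrand as $|\iota_M^*\theta\wedge\widetilde{\mathrm{vol}}_L|$ is the paper's $|\theta\wedge\psi/\psi(\beta)|$ (you are in fact more explicit than the paper about how the scale of $\theta$ is pinned down by $(H,g_H)$), and for part 3 your pointwise Jacobian formula $\lambda^{-k-1}J_\lambda=(|\xi^{(Z)}|^2+\lambda^{-2}|\xi^{(H)}|^2)^{1/2}$ plus dominated convergence is a cleaner route than the paper's splitting $S=S'+S''$ and its $g_\lambda$-area computation, while carrying the same information.

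Part 2, however, rests on a misstated key import, and the conclusion you draw from it is wrong. You claim $\mathcal S^{k+1}|_{M^*}=\sigma\,\mathcal H^k|_{M^*}$ with $\sigma$ strictly positive, depending on $p$ only through the vertical blow-up $V_p=L_p\oplus\R Z$, hence bounded below near characteristic points, and you conclude that the density $\rho=|\iota_\theta\xi|/\sigma$ extends continuously by $0$ across the characteristic set. The blow-up theory gives something different: the density of $\mathcal S^{k+1}|_M$ with respect to the Riemannian measure $\mathcal H^k|_M$ carries the very same transversality factor $|\iota_\theta\xi|$ as the oblique mass, i.e. $d\mathcal S^{k+1}|_M=\beta(V_p)\,|\iota_\theta\xi|\,d\mathcal H^k$ up to normalization; in particular $\sigma$ is not a function of $V_p$ alone (tilting $T_pM$ about $L_p$ changes it without changing $V_p$) and it vanishes at characteristic points. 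Concretely, for the plane $\{t=0\}$ in $\he 1$ both the oblique-mass density and the $\mathcal S^{3}$-density relative to the area element vanish linearly at the characteristic point, and their ratio is a nonzero constant. The correct density is therefore $\rho=1/\beta(V_p)$ (or $\beta(V_p)$, depending on convention): it is bounded away from zero, continuous on $M^*$ because $p\mapsto V_p$ is continuous there — this is exactly the invariant $c$ of the paper's proof, constant when $n=1$ — and it does not tend to $0$ at the characteristic set; no such extension is needed anyway, since that set is $\mathcal S^{k+1}$-null. With the imported fact corrected, your architecture (negligibility of the characteristic set, continuity of the metric factor on the Grassmannian of vertical subgroups) does deliver part 2 and is the same kind of blow-up argument the paper compresses into translation/rotation invariance plus $(k+1)$-homogeneity; but as written, the density you produce and its claimed vanishing at characteristic points are incorrect.
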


\begin{proof}
1. In a Riemannian manifold $V$, let $\theta$ be a smooth unit differential $1$-form. Let $Z$ denote the dual vectorfield. Let $S$ be a $C^1$ submanifold. At some point $p$ of $S$, assume that the tangent space $\tau=T_p S$ to $S$ is transverse to the horizontal space $\eta=\mathrm{Ker}(\theta_{p})$. Let $\beta$ be a $k-1$-vector (wedge of an orthonormal basis) dual to $\eta\cap\tau$ and $\nu_\tau$ a unit normal to $\eta\cap\tau$ in $\tau$. Let $\nu_\eta$ be a unit vector of $\mathrm{span}\{Z,\nu_\tau\}$ orthogonal to $Z$. Let $\alpha$ be the angle such that $\nu_\tau=\sin(\alpha)Z+\cos(\alpha)\nu_\eta$. Then, for any horizontal differential $k-1$-form $\phi$,
\begin{align*}
|(\theta\wedge\phi)(\nu_\tau\wedge\beta)|=|\sin(\alpha)\phi(\beta)|.
\end{align*}
Therefore, if $S$ still denotes the current of integration on $S$,
\begin{align*}
\mc M(S\res \theta)=\int_{S}|\sin(\alpha)|\,d\mathrm{area}.
\end{align*}
On the other hand, at $p$, the Riemannian $k$-area element can be written
\begin{align*}
d\mathrm{area}=|\theta\wedge\psi|,
\end{align*}
where $\psi$ is the horizontal $k-1$-covector such that
\begin{align*}
|(\theta\wedge\psi)(\nu_\tau\wedge\beta)|=1.
\end{align*}
Whence the expression
\begin{align*}
\mc M(S\res \theta)=\int_{S}|\theta\wedge\frac{\psi}{\psi(\beta)}|,
\end{align*}
which does not depend on the ambient Riemannian metric making $|\theta|=1$, only on the subRiemannian data.

2. When $V=\he{n}$, we see that the quantity $\mc M(S\res \theta)$, for $k$-dimensional $C^1$ submanifolds $S$, is invariant under Heisenberg translations and rotations, and homogeneous of degree $k+1$ under Heisenberg dilations. It follows that for every $p\in S$,
\begin{align*}
r^{-k-1}\mc M(S\res 1_{B(p,r)} \theta)
\end{align*}
converges as $r\to 0$ to an invariant $c$ of horizontal $k$-planes which is invariant under translations and rotations, and in turn, that the measure on Borel subsets $A\subset S$ given by
\begin{align*}
A\mapsto \mc M(A\res \theta)
\end{align*}
has density $c$ with respect to the subRiemannian spherical measure $\mathcal{S}^{k+1}$. Note that $c$ is constant when $n=1$.

3. Large Heisenberg dilations tend to increase up to $\frac{\pi}{2}$ the angle between tangent planes and the horizontal plane. Hence, when $S$ is nowhere horizontal, 
\begin{align*}
\lim_{\lambda\to\infty}\frac{\mc M((\mathfrak s_\lambda)_{\#}S\res\theta)}{\mc M((\mathfrak s_\lambda)_{\#}S)}=1.
\end{align*}
By subRiemannian invariance,
\begin{align*}
\mc M((\mathfrak s_\lambda)_{\#}S\res\theta)=\lambda^{k+1}\mc M(S\res\theta),
\end{align*}
whence the announced asymptotics. In general, $S=S+S''$, where $S'$ is nowhere horizontal and $S''$ is horizontal. Assuming that $S'\not=0$,
\begin{align*}
\mc M((\mathfrak s_\lambda)_{\#}S''\res\theta)&=0,\quad \mc M((\mathfrak s_\lambda)_{\#}S'')=\lambda^{k}\mc M(S''),\\
&\lim_{\lambda\to\infty}\frac{\mc M((\mathfrak s_\lambda)_{\#}S'\res\theta)}{\mc M((\mathfrak s_\lambda)_{\#}S')}=1,\quad 
\end{align*}
which implies again that
\begin{align*}
\mc M((\mathfrak s_\lambda)_{\#}S)\sim \lambda^{k+1}\mc M(S\res\theta).
\end{align*}
Quantitatively, $\mc M((\mathfrak s_\lambda)_\# S)=\mc M_\lambda(S)$, where $\mc M_\lambda$ is the mass associated with the Riemannian metric $g_\lambda:=\lambda^{2} g_\eta+\lambda^4\theta^2$. With respect to this metric, the unit horizontal $1$-form is $\theta_\lambda:=\lambda^{2} \theta$, the dual vectorfield is $Z_\lambda:=\lambda^{-2}Z$. The unit $k-1$-vector associated with $\tau\cap\eta$ is $\beta_\lambda:=\lambda^{-(k-1)}\beta$. Since the $2$-plane $\mathrm{span}\{Z,\nu_\eta\}$ is $g_\lambda$-orthogonal to $\eta\cap\tau$, it contains the unit normal $\nu_{\tau,\lambda}$ to $\eta\cap\tau$ in $\tau$, one can take $\nu_{\eta,\lambda}:=\lambda^{-1}\nu_{\eta}$ and $\nu_{\tau,\lambda}$ is proportional to $\nu_{\tau}$, 
\begin{align*}
\nu_{\tau,\lambda}:=((\lambda^2 \sin\alpha)^2+(\lambda\cos\alpha)^2)^{-1/2}\nu_\tau.
\end{align*}
Therefore
\begin{align*}
|(\theta\wedge\psi)(\nu_{\tau,t}\wedge\beta_\lambda)|=((\lambda^2 \sin\alpha)^2+(\lambda\cos\alpha)^2)^{-1/2}\lambda^{-(k-1)},
\end{align*}
hence the area element induced by $g_\lambda$ on $S$ is $|\theta\wedge\psi_\lambda|$ with
\begin{align*}
\psi_\lambda:=((\lambda^2 \sin\alpha)^2+(\lambda\cos\alpha)^2)^{1/2}\lambda^{k-1}\psi.
\end{align*}
This leads to 
\begin{align*}
\mc M_\lambda(S)&=\int_{f(\Delta^k)}\lambda^{k-1}\sqrt{(\lambda^2 \sin\alpha)^2+(\lambda\cos\alpha)^2}\,d\mathrm{area}\\
&=\lambda^{k}\int_{S}\sqrt{(\lambda \sin\alpha)^2+(\cos\alpha)^2}\,d\mathrm{area}.
\end{align*}
This allows to recover the estimate of Lemma \ref{mass} in the special case of immersed $C^1$ submanifolds, and shows in addition that, as $\lambda\to\infty$, the first term $\lambda^{k+1}\mc M(S\res\theta)$ is an asymptotic.
\end{proof}

\section{The random deformation theorem}\label{random}

\begin{definition}  \label{defC1chain}
Let $k=0,\ldots,2n+1$. Let $\Delta^k\subset \R^k$ denote the oriented regular Euclidean $k$-simplex. A \emph{$C^1$ simplex} (resp. \emph{Lipschitz simplex}) is a map $f:\Delta^k \to \he{n}$ of class $C^1$ (resp. Lipschitz with respect to the left-invariant Riemannian metric on $\he{n}$). 

A  $C^1$ (resp. \emph{Lipschitz}) \emph{$k$-chain} in $\he n$ is a finite sum of the form
$$
\sum_i a_i(f_i)_\#(\mathcal{L}^k \res \Delta^k)
$$
where $a_i \in\mathbb Z$ and $f_i: \Delta^k \to \he n$ are $C^1$ (resp. Lipschitz) simplices, viewed as a Federer-Fleming integral current.

\end{definition}

The following fact is general for Riemannian manifold admitting discrete cocompact groups of isometries, the details are provided in \cite{contactfilling}.

\begin{lemma}
There exists a constant $L=L(n)$ such that, for every  $\eps>0$, the Heisenberg group $\he n$ admits an $(\eps,L)$-triangulation, i.e. a triangulation whose simplices are $L$-biLipschitz to regular simplices of sidelength $\eps$.
\end{lemma}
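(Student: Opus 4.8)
The plan is to exploit the homogeneity of $\he n$ to produce one triangulation at a fixed scale whose simplices have uniformly controlled shape, and then to reach all small scales by a subdivision that does not degrade that shape. The left-invariant Riemannian metric on $\he n$ makes every left translation an isometry, and the integer Heisenberg group $\Gamma:=\he n(\Z)$ is a discrete cocompact subgroup, so the quotient nilmanifold $M:=\Gamma\backslash\he n$ is a compact Riemannian manifold. By Whitehead's smooth triangulation theorem $M$ admits a smooth triangulation, which lifts through the covering $\he n\to M$ to a $\Gamma$-invariant smooth triangulation $\mathcal T_0$. Since $M$ has only finitely many simplices and each lifted simplex is a smooth embedding of the standard simplex, there is a constant $L_0=L_0(n)$ --- uniform over all of $\he n$ by $\Gamma$-invariance --- such that every simplex of $\mathcal T_0$ is $L_0$-biLipschitz, for the Riemannian distance, to a regular Euclidean simplex of a fixed sidelength; absorbing that fixed sidelength into the constant, one may regard $\mathcal T_0$ as a $(1,L_0)$-triangulation.

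Next I would pass from scale $1$ to scale $\eps$ by subdivision. For $m\in\N$, subdivide each $k$-simplex of $\mathcal T_0$ ($k\le 2n+1$) by the edgewise (Freudenthal--Kuhn) subdivision into $m^k$ subsimplices, using a fixed total order on the vertices of $\mathcal T_0$ so that the subdivisions agree on common faces; the result $\mathcal T_m$ is then again a genuine triangulation refining $\mathcal T_0$. The key combinatorial input is that the subsimplices of the edgewise subdivision fall into boundedly many affine types, each affinely equivalent to the $\tfrac1m$-scaled standard simplex with distortion at most $C(k)$ \emph{independently of} $m$. Consequently, if $f\colon\Delta^k\to\he n$ is an $L_0$-biLipschitz simplex of $\mathcal T_0$ and $\sigma$ is one of its subsimplices, then $f$ restricted to $\sigma$ and reparametrized by the affine identification of $\tfrac1m\Delta^k$ with $\sigma$ is $\bigl(L_0C(k)\bigr)$-biLipschitz onto its image --- using that the restriction of a biLipschitz embedding to a subdomain of bounded eccentricity, reparametrized suitably, stays biLipschitz with a controlled constant. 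Thus $\mathcal T_m$ is a $(\tfrac1m,L)$-triangulation with $L:=L_0(n)\max_{k\le 2n+1}C(k)$. For an arbitrary small $\eps$ one takes $m\sim 1/\eps$, so that $\tfrac1m$ and $\eps$ differ by a bounded factor, which is absorbed into the final $L=L(n)$; this yields arbitrarily fine $(\eps,L)$-triangulations, exactly what the random deformation theorem of Section \ref{random} requires, and the remaining bookkeeping (including coarser scales) is carried out in \cite{contactfilling}.

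The only genuine difficulty is the insistence that $L$ be independent of $\eps$, equivalently of the number $m$ of subdivisions; this is precisely why one uses the edgewise subdivision rather than iterated barycentric subdivision, since the latter produces simplices that become arbitrarily thin, whereas the edgewise subdivision keeps all subsimplices within a fixed finite list of shapes however fine it is made. Everything else is soft and uses no Heisenberg-specific geometry --- only that cocompactness together with left-invariance turns ``finitely many simplices modulo $\Gamma$'' into a uniform biLipschitz bound over the whole group, and that restricting a biLipschitz embedding to a roundish subdomain costs only a bounded factor. I would isolate the elementary shape estimate for the Freudenthal subdivision as a separate combinatorial lemma and otherwise defer the details to \cite{contactfilling}, as the paper does.
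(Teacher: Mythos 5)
Your argument is correct and is essentially the route the paper itself gestures at (it gives no proof beyond invoking a discrete cocompact group of isometries and citing \cite{contactfilling}): triangulate the compact quotient $\Gamma\backslash\he n$ smoothly, lift $\Gamma$-invariantly to obtain a uniformly biLipschitz scale-one triangulation, and refine by edgewise (Freudenthal) subdivision, whose boundedly many simplex shapes keep the biLipschitz constant independent of the refinement depth. The only caveat is your deferral of ``coarser scales'' to the reference: for $\eps\to\infty$ the statement cannot hold with a fixed $L=L(n)$ in top dimension (an $L$-biLipschitz image of a Euclidean $(2n+1)$-simplex of sidelength $\eps$ has Riemannian volume $O((L\eps)^{2n+1})$ yet contains a ball of radius about $\eps/CL$, whose volume grows like $(\eps/CL)^{2n+2}$), so the lemma must be read -- as it is used in the paper -- for $\eps$ bounded above, a regime your subdivision argument fully covers.
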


Here is a modification of Federer \& Fleming's Deformation Theorem. In Euclidean space, it would be a mere restatement of an intermediate step of the classical construction, a step where algebraic properties still hold, a fact that will be crucial in Section \ref{AK versus FF}. We merely state a consequence customized for Riemannian Heisenberg groups.

\begin{proposition}[Random Deformation Theorem, \cite{contactfilling}]\label{deformation}
Let $\kappa$ be an $(\eps,L(n))$-triangulation of $\he n$. Then the group $I$ of Federer-Fleming integral currents in $\he n$ and the group $I_\kappa$ of chains of the triangulation $\kappa$ are randomly chain homotopic, with uniform mass bounds in expectation. 

Namely, there exists a constant $C=C(n)$ and random self-maps $P$ and $Q:I\to I$ such that
\begin{enumerate}
  \item $\partial P=P\partial$ ;
  \item $P+Q\partial+\partial Q$ is the identity map ;
  \item $P$ and $Q$ are additive: for every integral currents $S,S'$, $P(S+S')=P(S)+P(S')$, $Q(S+S')=Q(S)+Q(S')$.
  \item $P$ is a projector onto the subgroup $I_\kappa\subset I$: $P(I)\subset I_\kappa$ and $P\circ P=P$ ;
  \item For every integral current $S\in I$,
\begin{align*}
\E(\mc M(P(S)))&\le C\,(\mc M(S)+\epsilon \mc M(\partial S)),\\
\E(\mc M(Q(S)))&\le C\epsilon\,\mc M(S).
\end{align*}
\end{enumerate}
Furthermore, $P(S), Q(S)$ have support in a $C\eps$-neighborhood of the support of $S$. 

Finally, if $S$ is a $C^1$ (resp. Lipschitz) chain, so are $P(S)$ and $Q(S)$.
\end{proposition}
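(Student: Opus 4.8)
The plan is to reduce the statement to an intermediate stage of Federer \& Fleming's simplicial deformation construction, transported to $\he n$ through the biLipschitz charts provided by the triangulation $\kappa$. First I would recall that the classical construction, applied to a $k$-dimensional integral current $S$ in a triangulated $N$-manifold (here $N=2n+1$), is a composition of \emph{radial projection steps} indexed by the descending dimensions $m=N,N-1,\dots,k+1$: at the $m$-th step one chooses, in each $m$-simplex $\sigma$ met by the current, a point $a_\sigma$ in the interior of $\sigma$, and replaces the part of the current lying in $\sigma$ by its image under the radial projection $\pi_{a_\sigma}\colon\sigma\setminus\{a_\sigma\}\to\partial\sigma$, recording the cone homotopy $h_{a_\sigma}$ joining the two. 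Composing all these projections produces the map $P$, and accumulating all the cones produces the homotopy operator $Q$. The reason one stops here rather than running the full deformation theorem is that the algebraic identities $\partial P=P\partial$, $P+\partial Q+Q\partial=\mathrm{id}$, additivity in the current, and idempotence $P\circ P=P$ already hold exactly at this intermediate stage: they are formal consequences of the one-step identity $\mathrm{id}=(\pi_a)_{\#}+\partial h_a+h_a\partial$ and survive unchanged when the Euclidean cells are replaced by $L(n)$-biLipschitz simplices. Only the mass bounds see the geometry, through $L(n)$ and the sidelength $\eps$.

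Next I would introduce the randomness through the choice of the centers $a_\sigma$. For a $k$-current $S$ supported in an $m$-simplex $\sigma$ with $k<m$, the pushforward $(\pi_a)_{\#}S$ is a well-defined integral current for almost every $a\in\sigma$, and the Federer--Fleming averaging estimate gives
\begin{align*}
\int_\sigma \mc M((\pi_a)_{\#} S)\,da&\le C\,\mc M(S),\\
\int_\sigma \mc M(h_a(S))\,da&\le C\,\mathrm{diam}(\sigma)\,\mc M(S),
\end{align*}
because radial projection from $a$ is Lipschitz off $a$ with constant controlled by the reciprocal of the distance to $a$, whose average over $\sigma$ is finite in codimension $\ge 1$. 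Choosing each $a_\sigma$ independently and uniformly in $\sigma$ turns $P$ and $Q$ into random self-maps of the group $I$ of integral currents, and iterating the one-step bounds over $m=N,\dots,k+1$ yields the asserted expectations
\begin{align*}
\E(\mc M(P(S)))&\le C\,(\mc M(S)+\eps\,\mc M(\partial S)),\\
\E(\mc M(Q(S)))&\le C\eps\,\mc M(S),
\end{align*}
with $C=C(n)$: the $\eps$ in the second bound is the diameter of a simplex, and the term $\eps\,\mc M(\partial S)$ in the first comes from the lower-dimensional skeleton contributions of $\partial S$ that are fed into the successive projections.

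The remaining assertions I would dispatch as follows. For the support statement, each radial projection displaces points within a single simplex of $\kappa$, so after all steps $\supp P(S)$ and $\supp Q(S)$ lie in the union of simplices meeting $\supp S$, hence in a $C\eps$-neighborhood of $\supp S$. For the projector property, once a current is supported on the $k$-skeleton of $\kappa$ the subsequent radial projections act on it as the identity, and $P(S)$ has constant integer multiplicity on each $k$-simplex, so it is a chain of $\kappa$; this gives $P(I)\subset I_\kappa$, and, since $P$ fixes $I_\kappa$ pointwise, $P\circ P=P$. For regularity, a radial projection of a $C^1$ (resp. Lipschitz) simplex, after subdividing $\Delta^k$ along the preimages of the lower faces of the target simplex, is again a finite $C^1$ (resp. Lipschitz) chain, and this class is preserved under composition, so $P$ and $Q$ preserve $C^1$ (resp. Lipschitz) chains.

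The hard part will be the \emph{uniformity} of the constant $C(n)$ over all scales $\eps$ together with the exact homogeneity of the two mass bounds. This forces one to build the triangulation $\kappa$ equivariantly under a cocompact lattice of isometries of $\he n$, so that the local biLipschitz constants, and hence all the geometric estimates, become genuinely scale-independent, and to track carefully the interaction of $S$ with $\partial S$ under the successive projections in order to produce precisely the correction $\eps\,\mc M(\partial S)$ rather than a term of the wrong order. For these details I refer to \cite{contactfilling}.
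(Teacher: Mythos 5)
The paper itself offers no proof of this proposition: it is quoted from \cite{contactfilling}, with the remark that in Euclidean space it is an intermediate stage of the classical Federer--Fleming construction, namely the stage at which the algebraic properties still hold because the centers of the radial projections have not yet been selected in a way depending on the current. Your outline follows exactly this intended route: random, current-independent centers (whence additivity and bounds in expectation only), an equivariant biLipschitz triangulation (whence scale-independent constants), with the remaining details deferred to \cite{contactfilling}.

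There is, however, one concrete inaccuracy in the construction as you describe it. If $P$ is literally the composition $F_\#$ of the radial projections from the cells of dimensions $2n+1,\dots,k+1$, and $Q$ the sum of the corresponding cones, then $P$ is neither a chain map nor a projector onto $I_\kappa$: for a $k$-current $S$, $\partial(F_\# S)=F_\#\partial S$ is supported on the $k$-skeleton but in general charges the interiors of the $k$-cells, so $\partial P(S)\ne P(\partial S)$ (the operator $P$ acting on $(k-1)$-currents involves one more projection, from the centers of the $k$-cells), and the constancy theorem cannot be invoked to conclude that $P(S)$ is a chain of $\kappa$ with constant integer multiplicities. The classical construction adds to $F_\# S$ the cone, inside each $k$-cell and from the same random center, over $F_\#\partial S$; it is this correction term that restores $\partial P=P\partial$, makes $\partial P(S)$ cellular (hence $P(S)\in I_\kappa$ by constancy plus integrality, and then $P\circ P=P$), and is the exact source of the term $\eps\,\mc M(\partial S)$ in the bound for $\E(\mc M(P(S)))$: your attribution of that term to ``lower-dimensional skeleton contributions of $\partial S$ fed into the successive projections'' does not reflect this, since the bare composition already satisfies $\E(\mc M(F_\# S))\le C\,\mc M(S)$ with no boundary term at all. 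With this correction included, and $Q$ adjusted accordingly so that the homotopy identity in item (2) persists, the rest of your sketch --- additivity and the expectation estimates by averaging over independent uniform centers, the support statement, idempotence, and the preservation of $C^1$ and Lipschitz chains --- goes through as you indicate, in agreement with the construction the paper imports from \cite{contactfilling}.
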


\section{Ambrosio-Kirchheim metric currents versus horizontal
Federer-Fleming currents}\label{AK versus FF}

At the level of normal currents, the connection between metric and horizontal Federer-Fleming currents parallels the corresponding discussion in Euclidean spaces (\cite{AK_acta}, Section 5) and holds more generally for Carnot groups, as shown in \cite{Williams}, Theorem 1.6. 
Nevertheless, for completeness'sake, the details in the case of Heisenberg groups are provided here in Sections \ref{AK->FF} and \ref{FF->AK}.

\subsection{From metric to Federer-Fleming}
\label{AK->FF}

By definition (\cite{AK_acta}), a metric $k$-current is a multilinear functional $(f,\pi_1,\ldots\pi_k)\mapsto \TAK(f,\pi_1,\ldots\pi_k)$ on $k+1$-tuples of Lipschitz functions satisfying a locality axiom and a continuity axiom.
Therefore, a Federer-Fleming current $\widetilde \TAK$ is easily associated with $\TAK$ as follows. Fix smooth coordinates on $\he{n}$, for instance exponential coordinates $x^i$ where the last one $x^{2n+1}$ is the vertical coordinate, the others being horizontal and orthonormal. Then every smooth differential form $\omega$ has a unique expression
$$
\omega=\sum_I \omega_I dx^I
$$
as a sum of simple covectors.

Let $\TAK$ be a metric $k$-current on $\he{n}$. For $\omega$ a test $k$-form, let
$$
\Scal{\widetilde \TAK}{\omega }:=\sum_I \TAK(\omega_I,x_{i_1},\ldots,x_{i_k}),
$$
where the sum is extended over multiindices $I=(i_1,\ldots,i_k)$.

\textbf{Claim}. This defines a Federer-Fleming current $\widetilde \TAK$.

This follows from the continuity axiom in the definition of metric currents.

\textbf{Claim}. $\partial \widetilde \TAK=\widetilde{\partial \TAK}$.

By definition, for every test form $\omega$,
\begin{align*}
(\partial \TAK)(\omega)=\sum_{i,I}\TAK(\frac{\partial \omega_I}{\partial x_i},x_i,x_{i_i},\ldots,x_{i_k}),
\end{align*}
so
\begin{align*}
\widetilde{\partial \TAK}(\omega)&=\sum_{i,I}\Scal{\widetilde \TAK}{\frac{\partial \omega_I}{\partial x_i},x_i,x_{i_i},\ldots,x_{i_k}}\\
&=\Scal{\widetilde \TAK}{\sum_{i,I}\frac{\partial \omega_I}{\partial x_i},x_i,x_{i_i},\ldots,x_{i_k}}\\
&=\Scal{\widetilde \TAK}{d\omega}=\Scal{\partial\widetilde \TAK}{\omega}.
\end{align*}

\textbf{Claim}. If $\omega=g\,d\tau_1\wedge\cdots\wedge d\tau_\ell$, with $g$ bounded and $\tau_i$ Lipschitz functions on the support of $\TAK$, then $\widetilde{\TAK\res \omega}=\widetilde \TAK\res\omega$. If $\TAK$ has finite mass, so does $\TAK\res\omega$.

This is by design, the metric restriction mimics the Federer-Fleming restriction. The mass bound appears in Equation (2.5) in \cite{AK_acta})

\textbf{Claim}. $\mc M(\widetilde \TAK)\le C\,\mc M(\TAK)$, where $C$ depends on $n$ and on the support of $\TAK$.

Indeed, let $K$ denote the support of $\TAK$. There exists a constant $C$ such that, on $K$,
$$
\prod_{j=1}^k |dx_{i_j}|\le C(K)\,|\bigwedge_{j=1}^k dx_{i_j}|.
$$
Since $dist_c\ge dist$, subRiemannian Lipschitz constants $Lip_c$ are controlled by Riemannian ones, so, for simple differential forms $\omega=f\,dx_{i_1}\wedge\cdots\wedge dx_{i_k}$,
\begin{align*}
|\Scal{\widetilde \TAK}{\omega }| &=|\TAK(f,x_{i_1},\ldots,x_{i_k})|\\
&\le \mc M(\TAK)\|f\|_{\infty}\prod_{j=1}^k Lip_c(x_{i_j})\\ 
&\le \mc M(\TAK)\|f\|_{\infty}\prod_{j=1}^k |dx_{i_j}|\\ 
&\le C\,\mc M(\TAK)\|f\|_{\infty}|\bigwedge_{j=1}^k dx_{i_j}|\\
&=C\,\mc M(\TAK)\|\omega\|_\infty.
\end{align*}
Adding up boundedly many terms yields a similar estimate for general differential forms.

\begin{lemma}
If the metric current $\TAK$ has finite mass, then the corresponding Federer-Fleming current satisfies $\widetilde \TAK\res\theta=0$.
\end{lemma}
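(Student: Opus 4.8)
The plan is to reduce the assertion to the vanishing of the metric current $\TAK\res\theta$, and then to combine the identity $\theta_{|\mathfrak{h}_1}=0$ with the horizontality of finite-mass metric currents on the anisotropic $\he n$.

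First I would invoke the restriction claim established just above: $\widetilde{\TAK\res\omega}=\widetilde \TAK\res\omega$ whenever $\omega=g\,d\tau_1\wedge\cdots\wedge d\tau_\ell$ with $g$ bounded and the $\tau_i$ Lipschitz on the (compact) support of $\TAK$. By \eqref{theta}, the contact form $\theta=dt-\frac12\sum_j(x_j\,dy_j-y_j\,dx_j)$ is a finite sum of such one-form pieces --- its coefficients $1,x_j,y_j$ are bounded on the support of $\TAK$ and $t,x_j,y_j$ are $d_c$-Lipschitz there --- so by linearity $\widetilde \TAK\res\theta=\widetilde{\TAK\res\theta}$, and $\TAK\res\theta$ is a metric $(k-1)$-current of finite mass. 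It therefore suffices to show $\norm{\TAK\res\theta}=0$, equivalently $\Scal{\widetilde \TAK}{\theta\wedge\omega}=0$ for every test $(k-1)$-form $\omega$.

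The decisive input is that, for a finite-mass metric current $\TAK$ on $(\he n,d_c)$, the pairing $\Scal{\widetilde \TAK}{\alpha}$ with a smooth $k$-form $\alpha$ depends on $\alpha$ only through its restriction $\alpha_{|\mathfrak{h}_1}$ to the horizontal subbundle; more precisely $|\Scal{\widetilde \TAK}{\alpha}|\le C\int_{\he n}\norm{\alpha_{p|\mathfrak{h}_1}}\,d\norm{\TAK}(p)$. The reason is that the pointwise Lipschitz constant with respect to $d_c$ of a $C^1$ function $\pi$ equals $|\nabla_H\pi|$: by Proposition \ref{pansu rad} the $P$-differential of $\pi$ at a point is the horizontal covector $v\mapsto\scal{\nabla_H\pi}{v}$, of operator norm $|\nabla_H\pi|$ on $(\mathfrak{h},\norm{\cdot}_c)$, the vertical direction contributing nothing since displacements along it cost a square root of $d_c$-distance. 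Feeding this into Ambrosio--Kirchheim's mass bound $|\TAK(f,\pi_1,\dots,\pi_k)|\le\int|f|\prod_i\mathrm{Lip}_c(\pi_i,\cdot)\,d\norm{\TAK}$, together with the multilinear and alternating structure of metric currents and a blow-up of the mass measure, yields the refinement above; this is precisely Williams' horizontality statement (\cite{Williams}, Theorem 1.6) in the Heisenberg setting, which one may cite or reprove along these lines.

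Granting this, the proof ends in one line: from \eqref{theta} one checks $\theta(X_i)=\theta(Y_i)=0$ for all $i$, so $\theta_{|\mathfrak{h}_1}=0$, hence $(\theta\wedge\omega)_{|\mathfrak{h}_1}=\theta_{|\mathfrak{h}_1}\wedge\omega_{|\mathfrak{h}_1}=0$ for any $(k-1)$-form $\omega$, and therefore $\Scal{\widetilde \TAK}{\theta\wedge\omega}=0$. I expect the refined horizontality bound to be the main obstacle. Note that if $\theta$ were exact, $\theta=d\psi$, then $\theta_{|\mathfrak{h}_1}=0$ would force $\nabla_H\psi=0$, hence $\psi$ locally constant and $\TAK\res\theta=\TAK\res d\psi=0$ with no analysis at all; but $d\theta\ne0$, so $\theta$ has no primitive and the algebraic shortcut through the chain rule for metric currents is unavailable. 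One is forced into the measure-theoretic structure of finite-mass metric currents --- which is exactly where the hypothesis is used, and why the statement fails for general (e.g. Euclidean) currents.
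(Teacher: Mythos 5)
Your overall strategy (horizontality of finite-mass metric currents plus $\theta_{|\mathfrak h_1}=0$) is the right circle of ideas, but the decisive step is not actually proved, and the mechanism you sketch for it fails as stated. The refined bound $|\Scal{\widetilde \TAK}{\alpha}|\le C\int\|\alpha_{|\mathfrak h_1}\|\,d\|\TAK\|$ contains the lemma verbatim (take $\alpha=\theta\wedge\omega$), so deducing the lemma from it is only a reformulation; everything hinges on how the refined bound is obtained. Your proposed route -- pointwise $d_c$-Lipschitz constants equal $|\nabla_H\pi|$, fed into the Ambrosio--Kirchheim mass bound -- does not work termwise: in the coordinate expansion of $\theta\wedge\omega$ the potentials that occur are $t,x_j,y_j$, and the pointwise horizontal Lipschitz constant of $t$ is $\frac12|\bar p|$, not $0$. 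For instance, for $k=1$, $n=1$ one must show $\TAK(g,t)-\frac12\TAK(gx,y)+\frac12\TAK(gy,x)=0$, and each of the three terms is nonzero in general (try the current of integration over a horizontal line at height $y_0\neq 0$); the whole content of the lemma is the cancellation among them, which the pointwise-Lipschitz estimate alone cannot see. To extract that cancellation along your lines one must rewrite $\theta\wedge\omega$ near each point $p$ in left-translated coordinates, so that the adapted vertical coordinate has vanishing horizontal gradient at $p$; this in turn requires knowing that $\Scal{\widetilde \TAK}{f\,d\pi_1\wedge\cdots\wedge d\pi_k}=\TAK(f,\pi_1,\ldots,\pi_k)$ for arbitrary smooth potentials $\pi_i$ (a chain-rule/representation-independence fact from the Ambrosio--Kirchheim theory), a mass bound localized on small $d_c$-balls, and a limiting argument as the mesh tends to $0$. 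None of this is contained in the phrase ``multilinear and alternating structure plus a blow-up of the mass measure'', and it is exactly where the work lies. The fallback ``cite Williams, Theorem 1.6'' is also not satisfactory here: the paper invokes that theorem at the level of normal currents, while the lemma concerns merely finite-mass currents, and in any case this lemma is part of what the section sets out to prove for completeness.

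For comparison, the paper's proof needs no blow-up: it uses a single global scaling. The anisotropic dilations $\mathfrak s_\lambda$ are homotheties of $d_c$, so the metric mass scales exactly like $\lambda^k$, giving $\lambda^{2-k}\mc M(((\mathfrak s_\lambda)_{\#}\TAK)\res\theta)=\mc M(\TAK\res\theta)<\infty$, whereas for a horizontal $(k-1)$-form $\phi$ the pullback $\mathfrak s_\lambda^{*}(\theta\wedge\phi)$ scales like $\lambda^{k+1}$; comparing exponents shows that $\lambda\,\Scal{\widetilde \TAK}{\theta\wedge\phi}$ remains bounded as $\lambda\to\infty$, hence $\Scal{\widetilde \TAK}{\theta\wedge\phi}=0$. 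If you want a self-contained argument you should either reproduce this dilation argument or carry out in full the localization described above. A minor additional point: finite mass does not imply compact support, so your assertions that the coefficients of $\theta$ are bounded and that $t$ is $d_c$-Lipschitz ``on the support'' need a cutoff, or the remark that $\widetilde \TAK$ is only tested against compactly supported forms.
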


\begin{proof}
Let $\phi$ be a horizontal test $k-1$-form. Then, under the dilations $(\mathfrak s_\lambda)$, $\phi$ grows like $\lambda^{k-1}$, $\theta$ grows like $\lambda^2$, whereas, since the dilatations are metric homotheties, the mass of $\TAK$ grows like $\lambda^k$. This implies that $\Scal{\TAK}{\theta\wedge\phi }=0$.

Here are the details. One can write
\begin{align*}
\phi=\sum_{I\subset\{1,2n\},\,|I|=k-1}\phi_I dx^I,
\end{align*}
where the last coordinate $x_{2n+1}$ is absent. Then
\begin{align*}
\lambda^{-k+1}\mathfrak s_\lambda^{*}\phi=\sum_{I}(\phi_I \circ \mathfrak s_\lambda) \,dx^I.
\end{align*}
We observe that
\begin{align*}
\|\phi_I \circ \mathfrak s_\lambda\|_{\infty}=\|\phi_I\|_{\infty}\le \|\phi\|_{\infty} \quad\text{and}\quad Lip_c(x_I)\le 1.
\end{align*}
It follows that for every metric $k-1$-current $S$,
\begin{align*}
S(\lambda^{-k+1}\mathfrak s_\lambda^{*}\phi)\le C\,\mc M(S)(\sum_I\|\phi_I\|_{\infty}\prod_j Lip_c(x_{i_j})).
\end{align*}
On the other hand,
\begin{align*}
\lambda^{-2}(\mathfrak s_\lambda)^*\theta=\theta,
\end{align*}
hence
\begin{align*}
\lambda^{-2}(\mathfrak s_\lambda)_{\#}(\TAK\res\theta)=((\mathfrak s_\lambda)_{\#}\TAK)\res\theta,
\end{align*}
Since the Heisenberg dilatations $\mathfrak s_\lambda$ are homothetic, for every $k$-current $U$,
$$
\mc M((\mathfrak s_\lambda)_{\#}U)=\lambda^{k}\mc M(U).
$$
It follows that
$$
\lambda^{2-k}\mc M(((\mathfrak s_\lambda)_{\#}\TAK)\res \theta)=\mc M(\TAK\res\theta)<\infty.
$$

Hence
\begin{align*}
\lambda\,\Scal{\widetilde \TAK}{\theta\wedge\phi }
&=\lambda\,\Scal{(\mathfrak s_\lambda)_{\#}\widetilde \TAK}{(\mathfrak s_{1/\lambda})^*\theta\wedge(\mathfrak s_{1/\lambda})^*\phi }\\
&=\lambda^{2-k}\,(((\mathfrak s_\lambda)_{\#}\TAK)\res\theta)((\frac{1}{\lambda})^{-k+1}(\mathfrak s_{1/\lambda})^*(\sum\phi_I \circ  dx^{I}))\\
&\le \lambda^{2-k}\mc M(((\mathfrak s_\lambda)_{\#}\TAK)\res\theta)(\sum_I\|\phi_I\|_{\infty}\prod_j Lip_c(x_{i_j}))\\
&= \mc M(\TAK\res\theta)(\sum_I\|\omega_I\|_{\infty}\prod_j Lip_c(x_{i_j}))
\end{align*}
stays bounded as $\lambda$ tends to $\infty$. So $\Scal{\widetilde \TAK}{\theta\wedge\phi }=0$, and $\widetilde \TAK\res\theta=0$.

\end{proof}

\textbf{Claim}. If $\TAK$ is an integral metric current, then $\widetilde \TAK$ is an integral Federer-Fleming current.

Indeed, since $\widetilde{\partial \TAK}=\partial \widetilde \TAK$, $\TAK$ normal implies $\widetilde \TAK$ normal. According to the parametric representation theorem (Theorem 4.5 in \cite{AK_acta}), $\TAK$ is a countable sum of currents of the form $f_\#(A)$ for Lipschitz maps $f:A\to\he{n}$, $A\subset \R^k$, with additivity of masses. So is $\widetilde \TAK$, which is therefore integral.

\subsection{From Federer-Fleming to metric}
\label{FF->AK}

Conversely, we show that a normal horizontal Federer-Fleming current $\TFF $ defines a normal metric current $\widehat \TFF $. The point is to extend from smooth to Lipschitz test forms.

By Riesz' representation theorem we have
\begin{proposition}
If $\TFF \in \mathcal D_{k}(\mathcal U)$ has locally finite mass, 
there exists a Radon measure $\|\TFF \|$ on $\mc U$ and a function $\vec{T}_{FF}:\mc U\to\covH{k}$,
$|\vec{T}_{FF}|= 1$ $\|\TFF \|$-a.e. such that
\begin{itemize}
\item[i)] $\Scal{T}{\phi} = \int\scal{\vec{T}_{FF}}{\phi}\, d\|\TFF \|$ for all test form $\phi$;
\item[ii] $\|\TFF \|(\mc U) = \mc M(\TFF)$.
\end{itemize}
\end{proposition}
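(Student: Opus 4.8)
The plan is to deduce the statement from the scalar Riesz representation theorem, exactly as in Euclidean space (see \cite{federer}, Ch. IV, and \cite{simon}); the group structure of $\he n$ enters only through the fixed orthonormal coframe $dx^1,\dots,dx^{2n},\theta$ used to define pointwise norms of forms.

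First, local finiteness of mass means precisely that for every compact $K\subset\mc U$ there is $C_K<\infty$ with $|\Scal{\TFF}{\phi}|\le C_K\,\|\phi\|_\infty$ for all $\phi\in\mathcal D^k(\mc U)$ supported in $K$. In the fixed coframe a smooth $k$-form is a tuple $(\phi_I)$ of smooth functions with $\|\phi\|_\infty$ comparable on $K$ to $\max_I\|\phi_I\|_\infty$; since $\mathcal D^k(\mc U)$ is dense, in the sup norm, in the space $C_c(\mc U,\covH{k})$ of continuous compactly supported $k$-covector fields, $\TFF$ extends uniquely to a continuous linear functional on $C_c(\mc U,\covH{k})$. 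Reading this functional through the coframe as a continuous linear functional on a finite direct sum of copies of $C_c(\mc U)$ and applying the scalar Riesz theorem to each summand produces a $\covH{k}$-valued Radon measure $\mu$ on $\mc U$ representing $\TFF$. Set $\|\TFF\|:=|\mu|$, the (nonnegative) total variation measure, and let $\vec T_{FF}$ be the Radon--Nikodym density, so that $\mu=\vec T_{FF}\,\|\TFF\|$; the polar decomposition of a vector measure gives $|\vec T_{FF}|=1$ for $\|\TFF\|$-a.e.\ point, the pointwise norm being the Euclidean one induced by the coframe (which is self-dual), and $\Scal{\TFF}{\phi}=\int\scal{\vec T_{FF}}{\phi}\,d\|\TFF\|$. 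This is assertion i).

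For ii), one computes
\[
\|\TFF\|(\mc U)=|\mu|(\mc U)=\sup\Big\{\int\scal{\vec T_{FF}}{\phi}\,d\|\TFF\|\ :\ \phi\in C_c(\mc U,\covH{k}),\ \|\phi\|_\infty\le 1\Big\},
\]
and since $\mathcal D^k(\mc U)$ is sup-norm dense in $C_c(\mc U,\covH{k})$ the supremum is unchanged when restricted to $\phi\in\mathcal D^k(\mc U)$, which is by definition $\mc M(\TFF)$.

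There is no genuine difficulty here; the points that must be handled with care are the density of smooth forms in continuous ones (needed both to define the extension of $\TFF$ and to identify the supremum in ii) with $\mc M(\TFF)$) and the correct vector-valued form of Riesz representation together with its polar decomposition — for which the self-duality of the Euclidean pointwise norm on $\covH{k}$ induced by the orthonormal coframe is exactly what makes $|\vec T_{FF}|=1$ the right normalization and keeps the constant in ii) equal to $1$.
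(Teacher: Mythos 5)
Your argument is correct and is exactly the route the paper intends: the paper states this proposition as an immediate consequence of Riesz' representation theorem without proof, and your componentwise Riesz extension plus polar decomposition of the resulting vector measure is the standard way to fill that in (your remark on the self-duality of the Euclidean pointwise norm on $\covH{k}$, which the paper's Riemannian convention for norms of forms presupposes, is the right point to flag).
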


\begin{lemma}
\label{alphaH}
Let us denote by $\alpha^H$ the horizontal component of a covector $\alpha$.

Let $\alpha_1,\ldots,\alpha_k$ be smooth differential $1$-forms on $\he{n}$, let $f$ be a smooth function. Let $\TFF $ be a horizontal Federer-Fleming $k$-current. Then
$$
|\Scal{\TFF }{f\,\alpha_1\wedge\cdots\wedge\alpha_k }|\le C(n)\,\prod_{i=1}^k \|\alpha_i^H\|_\infty\,\int |f|\,d\|\TFF \|.
$$
\end{lemma}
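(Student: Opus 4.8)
The plan is to use the horizontality of $\TFF$ to throw away every contribution in which the contact form $\theta$ occurs, thereby reducing the inequality to a pointwise Cauchy--Schwarz estimate for horizontal forms via the Riesz representation recalled just above. First I would decompose each $1$-form along the orthogonal splitting $\covH 1=\covh 1\oplus\R\theta$, writing $\alpha_i=\alpha_i^H+g_i\,\theta$ with $\alpha_i^H$ a (horizontal) section of $\covh 1$ and $g_i$ a smooth function. Expanding the wedge product and using $\theta\wedge\theta=0$, only the terms containing at most one factor $\theta$ survive: the $\theta$-free term $\alpha_1^H\wedge\cdots\wedge\alpha_k^H$, which is a horizontal $k$-form, together with $k$ further terms, each of the form $\pm\,g_j\,\theta\wedge\bigl(\alpha_1^H\wedge\cdots\widehat{\alpha_j^H}\cdots\wedge\alpha_k^H\bigr)$, hence divisible by $\theta$. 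Since $\TFF$ is horizontal, $\TFF\res\theta=0$, so each of these $k$ terms is annihilated when paired with $\TFF$, and therefore
\[
\Scal{\TFF}{f\,\alpha_1\wedge\cdots\wedge\alpha_k}=\Scal{\TFF}{f\,\alpha_1^H\wedge\cdots\wedge\alpha_k^H}.
\]

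Next I would apply the Riesz representation $\Scal{\TFF}{\phi}=\int\scal{\vec T_{FF}}{\phi}\,d\|\TFF\|$ with $|\vec T_{FF}|=1$ $\|\TFF\|$-a.e., which turns the right-hand side into $\int f\,\scal{\vec T_{FF}}{\alpha_1^H\wedge\cdots\wedge\alpha_k^H}\,d\|\TFF\|$. Pointwise, Cauchy--Schwarz gives $|\scal{\vec T_{FF}}{\alpha_1^H\wedge\cdots\wedge\alpha_k^H}|\le|\alpha_1^H\wedge\cdots\wedge\alpha_k^H|$, and the Hadamard-type inequality $|\alpha_1^H\wedge\cdots\wedge\alpha_k^H|\le\prod_{i=1}^k|\alpha_i^H|\le\prod_{i=1}^k\|\alpha_i^H\|_\infty$ for the Euclidean norm on $\covh k$ then yields the integrand bound. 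Integrating against $|f|\,d\|\TFF\|$ produces the asserted estimate; in fact one may take $C(n)=1$, but leaving a dimensional constant allows, if preferred, to bound $\scal{\vec T_{FF}}{\cdot}$ more crudely by expanding $\alpha_i^H$ in the fixed orthonormal basis $\omega_1,\dots,\omega_{2n}$.

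There is no real obstacle: the only step requiring care is the bookkeeping in the wedge expansion, namely observing that $\theta\wedge\theta=0$ annihilates all terms with two or more factors of $\theta$ and that every surviving $\theta$-term is literally of the form $\theta\wedge(\cdots)$, so that the single identity $\TFF\res\theta=0$ (i.e.\ horizontality of $\TFF$) is all that is needed. Implicit throughout is that $\TFF$ has locally finite mass, so that $\|\TFF\|$ and $\vec T_{FF}$ are well defined; this is part of the standing hypotheses on horizontal currents here.
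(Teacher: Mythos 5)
Your proposal is correct and follows essentially the same route as the paper: split $\alpha_i=\alpha_i^H+g_i\theta$, observe that all remaining terms are divisible by $\theta$ and hence killed by $\TFF\res\theta=0$, then bound $\Scal{\TFF}{f\,\alpha_1^H\wedge\cdots\wedge\alpha_k^H}$ via the Riesz representation and the pointwise inequality $|\alpha_1^H\wedge\cdots\wedge\alpha_k^H|\le\prod_i\|\alpha_i^H\|_\infty$. The paper simply collects the $\theta$-terms into a single $\theta\wedge\beta$ rather than enumerating them, but the argument is the same.
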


\begin{proof}
By definition, there exists smooth functions $\lambda_i$ such that $\alpha_i=\alpha_i^H+\lambda_i\theta$. So there exists a $k-1$-form $\beta$ such that
$$
\alpha_1\wedge\cdots\wedge\alpha_k=\alpha^H_1\wedge\cdots\wedge\alpha^H_k+\theta\wedge\beta.
$$
Since $\TFF \res\theta=0$,
\begin{align*}
|\Scal{\TFF }{f\,\alpha_1\wedge\cdots\wedge\alpha_k }|&=|\Scal{\TFF }{f\,\alpha^H_1\wedge\cdots\wedge\alpha^H_k }|\\
&\le \|\alpha^H_1\wedge\cdots\wedge\alpha^H_k\|_\infty\,\int |f|\,d\|\TFF \|\\
&\le C\,\|\alpha^H_1\|_\infty\cdots\|\alpha^H_k\|_\infty\,\int |f|\,d\|\TFF \|,
\end{align*}
for some constant depending only on dimension.
\end{proof}

\begin{proposition}
\label{lipc}
Let $\pi_1,\ldots,\pi_k$ and $f$ be smooth functions on $\he{n}$. Let $\TFF $ be a horizontal Federer-Fleming $k$-current. Then
$$
|\Scal{\TFF }{f\,d\pi_1\wedge\cdots\wedge d\pi_k }|\le C(n)\,\prod_{i=1}^k Lip_c(\pi_i)\,\int |f|\,d\|\TFF \|.
$$
\end{proposition}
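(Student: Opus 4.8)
The plan is to reduce the statement to Lemma \ref{alphaH} by choosing, for each $i$, the smooth $1$-form $\alpha_i:=d\pi_i$. That lemma immediately yields
\begin{align*}
|\Scal{\TFF }{f\,d\pi_1\wedge\cdots\wedge d\pi_k }|\le C(n)\,\prod_{i=1}^k \|(d\pi_i)^H\|_\infty\,\int |f|\,d\|\TFF \|,
\end{align*}
so the whole proof comes down to the pointwise bound $\|(d\pi)^H\|_\infty\le Lip_c(\pi)$ for a single smooth function $\pi$ on $\he n$.

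To establish that bound, I would first write out the horizontal part of $d\pi$ in the standard coframe: since $(dx_1,\dots,dx_n,dy_1,\dots,dy_n,\theta)$ is orthonormal and dual to $(X_1,\dots,X_n,Y_1,\dots,Y_n,Z)$,
\begin{align*}
(d\pi)^H=\sum_{i=1}^n (X_i\pi)\,dx_i+\sum_{i=1}^n (Y_i\pi)\,dy_i,
\end{align*}
whence $|(d\pi)^H|=\big(\sum_i (X_i\pi)^2+\sum_i (Y_i\pi)^2\big)^{1/2}=|\gradh\pi|$ at every point. So what must be shown is $\|\gradh\pi\|_\infty\le Lip_c(\pi)$.

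This is the easy half of the classical identification of the Carnot--Carathéodory Lipschitz constant of a smooth function with the sup-norm of its horizontal gradient, and I would argue it pointwise. Fix $p\in\he n$ and a vector $v\in\mfrak h_1$ at $p$ of unit length, written $v=\sum a_iX_i(p)+\sum b_iY_i(p)$ with $\sum a_i^2+\sum b_i^2=1$; let $c$ be the integral curve through $p=c(0)$ of the left-invariant horizontal field $V:=\sum a_iX_i+\sum b_iY_i$, so that $V(p)=v$ and $|V|\equiv 1$. Then $c$ is a horizontal curve of subRiemannian speed $1$, hence $\dc{p}{c(t)}\le t$. Because $c'(0)=v$ is horizontal, $\theta(c'(0))=0$, so $d\pi(c'(0))=(d\pi)^H(v)$, and therefore
\begin{align*}
|(d\pi)^H(v)|=\Big|\lim_{t\to 0}\frac{\pi(c(t))-\pi(p)}{t}\Big|\le \limsup_{t\to 0}\frac{Lip_c(\pi)\,\dc{p}{c(t)}}{t}\le Lip_c(\pi).
\end{align*}
Taking the supremum over unit $v\in\mfrak h_1$ gives $|\gradh\pi(p)|\le Lip_c(\pi)$, and then the supremum over $p$ gives $\|\gradh\pi\|_\infty\le Lip_c(\pi)$. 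Substituting this into the consequence of Lemma \ref{alphaH} above completes the proof, with the same constant $C(n)$.

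I do not expect any real obstacle here: the only ingredient beyond bookkeeping is the inequality $\|\gradh\pi\|_\infty\le Lip_c(\pi)$, whose smooth-function version used above is immediate from the length estimate along horizontal curves. (The full identity $\|\gradh\pi\|_\infty=Lip_c(\pi)$ — whose reverse inequality follows by integrating $\frac{d}{dt}\pi(c(t))=\scal{\gradh\pi}{c'(t)}$ along arc-length parametrized horizontal curves and passing to the infimum — is not needed here, and for merely Lipschitz $\pi$ would instead rely on Pansu--Rademacher differentiability, Proposition \ref{pansu rad}.)
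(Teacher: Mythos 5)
Your proposal is correct and follows the same route as the paper: reduce to Lemma \ref{alphaH} with $\alpha_i=d\pi_i$ and use the relation between $Lip_c(\pi)$ and $\|(d\pi)^H\|_\infty$, which the paper simply asserts as a definition-level identity while you prove (only) the needed inequality $\|(d\pi)^H\|_\infty\le Lip_c(\pi)$ along integral curves of left-invariant horizontal fields. The extra detail is sound and nothing essential differs.
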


\begin{proof}
By definition of the subRiemannian metric on $\he{n}$, for a smooth function $\pi$, $Lip_c(\pi)=\|(d\pi)^H\|_\infty$. The estimate then follows from Lemma \ref{alphaH}.
\end{proof}

From now on, we follow closely arguments from \cite{AK_acta}, sections 5 and 11.

\begin{lemma}
\label{unifcont}
Let $f,\pi_1,\ldots,\pi_k$ and $f',\pi'_1,\ldots,\pi'_k$ be smooth functions on $\he{n}$ such that
$$
\forall i,\quad Lip_c(\pi_i)\le 1,\quad \text{and}\quad Lip_c(\pi'_i)\le 1.
$$ 
Let $\TFF $ be a horizontal Federer-Fleming $k$-current. Then
\begin{align*}
&|\Scal{\TFF }{f\,d\pi_1\wedge\cdots\wedge d\pi_k }-\Scal{\TFF }{f'\,d\pi'_1\wedge\cdots\wedge d\pi'_k }|\\&\le C(n)\,(\int|f-f'|\,d\|\TFF \|\\
& +\sum_{i=1}^k \int |f||\pi_i-\pi'_i|\,d\|\partial \TFF \|+Lip_c(f)\int |\pi_i-\pi'_i|\,d\|\TFF \|).
\end{align*}
\end{lemma}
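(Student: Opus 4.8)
The goal is a quantitative continuity estimate for the multilinear functional $(f,\pi_1,\ldots,\pi_k)\mapsto \Scal{\TFF}{f\,d\pi_1\wedge\cdots\wedge d\pi_k}$, under the normalization $Lip_c(\pi_i)\le 1$, $Lip_c(\pi'_i)\le 1$; this is exactly the continuity axiom of \cite{AK_acta} adapted to our setting, and it is what will let us extend $\widehat\TFF$ from smooth to Lipschitz test forms. The plan is to follow \cite{AK_acta}, sections 5 and 11, telescoping the difference one factor at a time and using Proposition \ref{lipc} together with the Leibniz/boundary identity $(\partial T)\res\phi=T\res(d\phi)+(-1)^{\deg\phi}\partial(T\res\phi)$ recalled in Section \ref{defhoriz} to integrate by parts. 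Crucially, every intermediate current produced by a restriction $\TFF\res(g\,d\sigma_1\wedge\cdots)$ is again a horizontal Federer-Fleming current (this is the content of the third Claim in Section \ref{AK->FF}, that horizontal restrictions stay horizontal), so Proposition \ref{lipc} applies at every step.

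First I would write the difference as a telescoping sum. Replace $f\,d\pi_1\wedge\cdots\wedge d\pi_k - f'\,d\pi'_1\wedge\cdots\wedge d\pi'_k$ by
$$
(f-f')\,d\pi_1\wedge\cdots\wedge d\pi_k + \sum_{j=1}^k f'\,d\pi'_1\wedge\cdots\wedge d\pi'_{j-1}\wedge d(\pi_j-\pi'_j)\wedge d\pi_{j+1}\wedge\cdots\wedge d\pi_k .
$$
The first term is controlled directly by Proposition \ref{lipc}: its pairing with $\TFF$ is at most $C(n)\prod Lip_c(\pi_i)\int|f-f'|\,d\|\TFF\|\le C(n)\int|f-f'|\,d\|\TFF\|$. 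For each term in the sum, the form $f'\,d\pi'_1\wedge\cdots\wedge d(\pi_j-\pi'_j)\wedge\cdots$ can be rewritten, using $d(g\,d\sigma_1\wedge\cdots)=dg\wedge d\sigma_1\wedge\cdots$, as $\pm d$ of something minus a correction: precisely, set $g_j:=\pi_j-\pi'_j$ and $\phi_j := f'\,d\pi'_1\wedge\cdots\wedge d\pi'_{j-1}\wedge d\pi_{j+1}\wedge\cdots\wedge d\pi_k$ (a $k-1$-form), so the term is $\pm g_j\,d f'\wedge(\text{factors})$ rearranged, i.e. $\pm dg_j\wedge\phi_j'$ where $\phi_j'$ absorbs $f'$. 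Then $dg_j\wedge\phi_j' = d(g_j\phi_j') - g_j\,d\phi_j'$ (up to sign), and pairing with $\TFF$ turns the exact part into a boundary term $\pm\Scal{\partial\TFF}{g_j\phi_j'}$, estimated by $\int|g_j|\,|\phi_j'|\,d\|\partial\TFF\|$, while the remaining part $-g_j\,d\phi_j'$ pairs against $\TFF$ with $d\phi_j'$ involving $df'$, giving the $Lip_c(f)\int|\pi_i-\pi'_i|\,d\|\TFF\|$ terms via Proposition \ref{lipc} again. Collecting the factors $|\phi_j'|$: each is a wedge of $k-1$ differentials each with $Lip_c\le 1$ times a bounded $f'$; invoking Lemma \ref{alphaH}, the relevant norm is controlled by $\|(df')^H\|_\infty=Lip_c(f')$ or by $\|f'\|_\infty$ according to where $f'$ sits, absorbed into the constant $C(n)$. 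Summing over $j$ produces exactly the stated right-hand side.

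The one place requiring care — and the main obstacle — is bookkeeping the signs and making sure that when we write $dg_j\wedge\phi_j'=d(g_j\phi_j')-(-1)^{?}g_j\,d\phi_j'$ and then pass to $\partial\TFF$, the resulting form $g_j\phi_j'$ really is of the type $g\,d\sigma_1\wedge\cdots\wedge d\sigma_{k-1}$ with $g$ bounded on $\supp\TFF$ and each $Lip_c(\sigma_i)\le 1$, so that Proposition \ref{lipc} applies to $\partial\TFF$ as well (which is legitimate since $\partial\TFF$ is also horizontal, $\TFF$ being a horizontal current, and has finite mass by the normality hypothesis implicit in the eventual application). A secondary subtlety is that $\partial\TFF$ acts on $k-1$-forms, so one must verify that $\phi_j'$ contributes a genuine $(k-1)$-form, which it does by construction since we removed the $j$-th differential. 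Once these algebraic matchings are pinned down, the estimate is an immediate consequence of Proposition \ref{lipc}, Lemma \ref{alphaH}, and the boundary formula, with $C(n)$ accumulating the finitely many combinatorial constants from the $k$ terms and from the comparison $\prod|dx_{i_j}|\le C|\bigwedge dx_{i_j}|$.
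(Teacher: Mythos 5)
Your proposal is correct and follows essentially the same route as the paper: telescope the difference one factor at a time, rewrite each term $f\,dg\wedge\omega$ as $d(fg\,\omega)-g\,df\wedge\omega$ with $\omega$ a closed wedge of differentials, move the exact part to $\partial\TFF$, and estimate both pieces by Proposition \ref{lipc} (legitimate for $\partial\TFF$ because $\partial\TFF\res\theta=0$ by the definition of horizontality — the appeal to the third Claim of Section \ref{AK->FF}, which concerns metric currents, is not needed). The only bookkeeping difference is that, by swapping $f\to f'$ first, your middle terms come out with $|f'|$ and $Lip_c(f')$ rather than the stated $|f|$ and $Lip_c(f)$; to get the inequality literally as stated, telescope in the paper's order (replace the $\pi_i$ by $\pi'_i$ first, keeping the coefficient $f$, and change $f$ to $f'$ last), or simply note that relabeling the two tuples yields the equivalent estimate.
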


\begin{proof}
Let us abbreviate $d\pi_2\wedge\cdots\wedge d\pi_k:=d\pi_0$ and integrate by parts
\begin{align*}
&|\Scal{\TFF }{f\,d\pi_1\wedge d\pi_0 }-\Scal{\TFF }{f\,d\pi'_1\wedge d\pi_0 }|\\
&=|\Scal{\TFF }{d(f(\pi_1-\pi'_1)\wedge d\pi_0) }-\Scal{\TFF }{(\pi'_1-\pi_1)\,df\wedge d\pi_0 }|.
\end{align*}
Applying Proposition \ref{lipc} allows to estimate the first term, 
\begin{align*}
|\Scal{\TFF }{d(f(\pi_1-\pi'_1)\wedge d\pi_0) }|
&=|\Scal{\partial \TFF }{f(\pi_1-\pi'_1)\wedge d\pi_0)}|\\
&\le C\,\prod_{i=2}^k Lip_c(\pi_i)\int|f||\pi_1-\pi'_1|\,d\|\partial \TFF \|\\
&\le C\,\int|f||\pi_1-\pi'_1|\,d\|\partial \TFF \|,
\end{align*}
since $Lip_c(\pi_i)\le 1$. The second term is estimated in a similar manner,
\begin{align*}
|\Scal{\TFF }{(\pi'_1-\pi_1)\,df\wedge d\pi_0 }|
&\le C\,Lip_c(f)\prod_{i=2}^k Lip_c(\pi_i)\,\int|\pi_1-\pi'_1|\,d\|\TFF \|\\
&\le C\,Lip_c(f)\int|\pi_1-\pi'_1|\,d\|\TFF \|.
\end{align*}
Adding the two terms yields
\begin{align*}
&|\Scal{\TFF }{f\,d\pi_1\wedge d\pi_0 }-\Scal{\TFF }{f\,d\pi'_1\wedge d\pi_0 }|\\
&\le C\,(\int|f||\pi_1-\pi'_1|\,d\|\partial \TFF \|+Lip_c(f)\int|\pi_1-\pi'_1|\,d\|\TFF \|).
\end{align*}
Repeating the argument with pairs $(\pi_i,\pi'_i)$, $i=2,\dots,k$, and adding up, leads to the sum on the right hand side. Finally, 
\begin{align*}
|\Scal{\TFF }{f\bigwedge_{i=1}^k d\pi'_i }&-\Scal{\TFF }{f'\bigwedge_{i=1}^k d\pi'_i }\le \int|f-f'|\,d\|\TFF \|.
\end{align*}
Adding this extra term to the sum yields the announced estimate for the difference
\begin{align*}
|\Scal{\TFF }{f\bigwedge_{i=1}^k d\pi_i }-\Scal{\TFF }{f'\bigwedge_{i=1}^k d\pi'_i }|.
\end{align*}

\end{proof}

By convolution, every Lipschitz function $f$ on $\he{n}$ is the limit of a sequence $f^\eps$ of smooth functions with uniformly bounded Lipschitz constants.
Lemma \ref{unifcont} shows that when smooth functions $f^{\eps},\pi^{\eps}_1,\ldots,\pi^{\eps}_k$ converge uniformly to Lipschitz functions $f,\pi_1,\ldots,\pi_k$ while keeping bounded Lipschitz constants, the numbers $\Scal{\TFF }{f\bigwedge_{i=1}^k d\pi_i }$ converge. Let us denote the limit by
\begin{align*}
\widehat \TFF (f,\pi_1,\ldots,\pi_k):=\lim_{\eps\to 0}\Scal{\TFF }{f\bigwedge_{i=1}^k d\pi_i }.
\end{align*}
This defines a metric functional, multilinear in $f,\pi_1,\ldots,\pi_k$. It is local in the sense that its support is contained in the support of $\TFF $. Since the convergence is uniform on $(Lip_c^1)^{k+1}$, the metric functional $\widehat \TFF $ satisfies the requested continuity axiom, hence it is a metric current. 

\textbf{Claim}. Mass estimate.
\begin{align*}
\|\widehat \TFF \|\le C(n)\,\|\TFF \|.
\end{align*}
In particular, 
\begin{align*}
\mc M(\widehat \TFF )\le C(n)\,\mc M(\TFF ).
\end{align*}
Indeed, for smooth data $f,\pi_1,\ldots,\pi_k$,
\begin{align*}
|\widehat \TFF (f,\pi_1,\ldots,\pi_k)|\le C(n)\,\prod_{i=1}^{k}Lip_c(\pi_i)\,\int|f|\,d\|\TFF \|.
\end{align*}
With Lemma \ref{unifcont}, this inequality passes to the limit and applies to arbitrary Lipschitz data $f,\pi_1,\ldots,\pi_k$. 

\textbf{Claim}. $\partial \widehat \TFF =\widehat{\partial \TFF }$.

Again, for smooth data $f,\pi_1,\ldots,\pi_k$,
\begin{align*}
(\partial\widehat \TFF )(f,\pi_1,\ldots,\pi_k)&=\widehat \TFF (1,f,\pi_1,\ldots,\pi_k)=\Scal{\TFF }{df\wedge \bigwedge_{i=1}^k d\pi_i}\\
& =\Scal{\TFF }{d(f\wedge \bigwedge_{i=1}^k d\pi_i)}=\Scal{\partial \TFF }{f\wedge \bigwedge_{i=1}^k d\pi_i }\\
&=\widehat{\partial \TFF }(f,\pi_1,\ldots,\pi_k).
\end{align*}
This extends to all Lipschitz data by the continuity property of metric currents, showing that $\partial \widehat \TFF =\widehat{\partial \TFF }$.

We conclude that if $\TFF $ is a normal horizontal Federer-Fleming current, then $\widehat \TFF $ is a normal metric current.

\subsection{Structure of Federer-Fleming horizontal integral currents}

Our aim is to show that if $\TFF $ is a horizontal integral current, then $\widehat \TFF $ is an integral metric current. There is a difficulty, stemming from the fact that the corresponding statement probably fails for rectifiable currents, as the following example indicates. It is a good candidate for a horizontal, Euclidean-rectifiable set, which would not be metric-rectifiable.

\begin{example}
\label{Cantor}
Start with a Cantor set $A$ of positive measure in $[0,1]$. Consider the continuous function $u(x)=\sqrt{d(x,A)}$ on $\R$. Let 
$$
f:\R\to \he1,\quad f(x)=(x,0,\int_{0}^{x}u(y)\,dy).
$$
Then $f$ is $C^1$ on $\R$, $\theta(f'(x))=u(x)$ for all $x\in \R$, so $f'$ is horizontal at each point of $A$. For a suitable choice of $A$, $f$ is not P-differentiable at some density point of $A$. 
\end{example}
Assume that $A$ is obtained as follows: let $(r_i)$ be a sequence of positive real numbers such that $\sum_{i=0}^{\infty}2^i r_i < 1$. Let $I_{\emptyset}=[0,1]$. Remove a symmetric open interval of length $r_0$ around $\frac{1}{2}$, getting two intervals $I_{0}$ and $I_{1}$, then symmetric open intervals of length $r_1$ at the centers of both intervals, and so on. Then $0$ is a density point of $A$. Let $I_{00\ldots0}=(a,b)$ denote the leftmost interval at the $j$th stage. Its length is $r_j$ and its position is roughly $2^{-j-1}$. The vertical projection of $f(b)$ satisfies
\begin{align*}
\int_{0}^{b}u(y)\,dy\ge \int_{a}^{b}u(y)\,dy=2\int_{0}^{r_j/2}\sqrt{y}\,dy=\frac{4}{3}(\frac{r_j}{2})^{3/2}.
\end{align*}
One can choose the sequence $(r_j)$ such that $r_j^{3/2}$ is not $o(2^{-2j})$. Then $f$ is not P-differentiable at $0$.

%\begin{question}
%Is this an example of a $1$-dimensional horizontal Federer-Fleming rectifiable current which is not an Ambrosio-Kirchheim metric rectifiable current?
%\end{question}
%
%\begin{conjecture}
%The support of the above example has a positive and finite Hausdorff $\mathcal{H}^{\phi}$-measure, for a strictly superlinear function $\phi$.
%\end{conjecture}
%This would imply that the example is not a rectifiable metric current.

\subsection{Horizontal $C^1$ chains}

Our strategy is to show that horizontal $C^1$ chains are metric integral currents, and to approximate general horizontal Federer-Fleming currents with such $C^1$ chains.

\begin{notation}
Let $\mathcal{V}$ be an open subset of $\R^k$, let $A\subset \mathcal{V}$ be a Borel subset and $f:\mathcal{U}\to\he{n}$ a map of class $C^1$ in the usual sense. The Federer-Fleming current of integration on $f(A)$ is denoted by
$$
T_{f(A)}:=f_\#(\mathcal{L}^k\res A).
$$
\end{notation}

\begin{definition}
Let $k=0,\ldots,2n+1$. Let $\Delta^k\subset \R^k$ denote the oriented regular Euclidean $k$-simplex. In a Riemannian manifold $V$, a $C^1$ simplex is a $C^1$ map $f:\Delta^k \to V$. Given $\eps>0$, the simplex $f$ is $C^1_\eps$ if $\forall  x,y\in\Delta^k$,
\begin{align*}
 (1-\eps)d(x,y)\le d(f(x),f(y))\le (1+\eps)d(x,y)
\end{align*}

A $C^1$ (resp. $C^1_\eps$) chain is a finite $\Z$-linear combination of $C^1$ (resp. $C^1_\eps$) simplices, viewed as a Federer-Fleming integral current. 
\end{definition}

\begin{lemma}
\label{C1=>metric}
Horizontal $C^1$ chains in $\he{n}$ are metric integral currents.
\end{lemma}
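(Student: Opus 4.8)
The plan is to reduce to a single horizontal $C^1$ simplex and then quote Ambrosio--Kirchheim's structure theory. Recall that a horizontal $C^1$ chain is a finite $\Z$-linear combination $\sum_i a_i\,T_{f_i(\Delta^k)}$ of horizontal $C^1$ simplices, that is, $C^1$ maps $f_i:\Delta^k\to\he n$ with $f_i^*\theta\equiv 0$ (hence also $f_i^*d\theta=d(f_i^*\theta)=0$, so the image is tangent to, and isotropic in, the contact distribution; in particular such chains vanish identically when $k>n$). Since the integral metric currents on $(\he n,d)$ form a group closed under addition and multiplication by integers, it suffices to show that $T_{f(\Delta^k)}=f_\#(\mathcal L^k\res\Delta^k)$ is an integral metric current for a single horizontal $C^1$ simplex $f$.

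The one point where horizontality is used --- and where it is indispensable, compare Example \ref{Cantor} --- is the assertion that $f$ is Lipschitz for a homogeneous distance $d$ (subRiemannian or Cygan--Kor\'anyi), not merely for the Riemannian one. Indeed, given $x,y\in\Delta^k$, the Euclidean segment $[x,y]$ lies in $\Delta^k$ by convexity, and the curve $s\mapsto f\big((1-s)x+sy\big)$ has velocity $Df(\cdot)(y-x)$, which lies in $\ker\theta$ since $f^*\theta=0$; it is therefore a horizontal curve. On horizontal vectors the subRiemannian quadratic form is the restriction of the chosen left-invariant Riemannian metric, which is comparable to the Euclidean metric on the compact set $f(\Delta^k)$; hence the subRiemannian length of this curve is bounded by a constant (depending on $f$) times $|x-y|$, and since $d$ is bi-Lipschitz equivalent to the subRiemannian distance, $d(f(x),f(y))\le C\,|x-y|$.

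Next I would invoke Ambrosio--Kirchheim's machinery for the complete metric space $(\he n,d)$. The pushforward under the Lipschitz map $f$ of the finite-measure set $\mathcal L^k\res\Delta^k$ is an integer rectifiable metric $k$-current (\cite{AK_acta}), of finite mass $\le (\mathrm{Lip}\,f)^k\,\mathcal L^k(\Delta^k)$. Its boundary is $f_\#\big(\partial(\mathcal L^k\res\Delta^k)\big)$, the $\Z$-combination of the pushforwards of the currents of integration over the $(k-1)$-dimensional faces of $\Delta^k$; parametrizing each face affinely by a bounded subset of $\R^{k-1}$ and precomposing with $f$ yields again a $d$-Lipschitz map (the restriction of a horizontal $C^1$ simplex to a face is a horizontal $C^1$ simplex), so $\partial T_{f(\Delta^k)}$ has finite mass. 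Thus $T_{f(\Delta^k)}$ is integer rectifiable with finite mass and finite-mass boundary, i.e. an integral metric current; summing over $i$ proves the lemma.

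Finally one checks that the integral metric current so produced from the chain $\TFF$ is exactly the metric current $\widehat{\TFF}$ of Section \ref{FF->AK}: both are continuous metric functionals on $(\mathrm{Lip}_c^1)^{k+1}$ which agree on smooth data $f,\pi_1,\dots,\pi_k$, where each equals $\sum_i a_i\int_{\Delta^k}(f\circ f_i)\,d(\pi_1\circ f_i)\wedge\cdots\wedge d(\pi_k\circ f_i)$, so they coincide. I do not expect a genuine obstacle here; the whole content is the Lipschitz estimate of the second paragraph, which is precisely where the $C^1$ --- rather than merely rectifiable --- hypothesis enters.
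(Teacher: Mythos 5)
There is a genuine gap, and it sits in your very first sentence: you read ``horizontal $C^1$ chain'' as ``finite $\Z$-combination of $C^1$ simplices with $f_i^*\theta\equiv 0$''. That is not the paper's hypothesis. By Definition \ref{defC1chain} and the definition of horizontality in Section \ref{defhoriz}, the assumption is only that the chain, \emph{as a Federer-Fleming current}, satisfies $\TFF\res\theta=0$ and $\TFF\res d\theta=0$. This is strictly weaker, for two reasons. First, vertical contributions of individual simplices can cancel between overlapping simplices, so no single $f_i$ need pull back $\theta$ to zero; the paper handles this by extracting a mass decomposition $S=\sum\lambda_i T_{f_i(\mathcal V_i)}$ and invoking Lemma \ref{add} to push horizontality down to each summand. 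Second, even for a single simplex whose current is horizontal, the relation $\int f^*(\theta\wedge\phi)=0$ gives $f^*\theta=0$ only on the open set where $f$ is an immersion (elsewhere the pullback of every $k$-form vanishes and the current carries no information). Concretely, take $k=2$ and $f(s,t)=(0,0,g(t))$: the current $T_{f(\Delta^2)}$ is zero, hence horizontal, yet $f^*\theta\neq 0$ and $f$ is not Lipschitz for any homogeneous distance. So your key step --- ``$f^*\theta\equiv0$, hence every segment maps to a horizontal curve, hence $f$ is $d$-Lipschitz on $\Delta^k$'' --- proves only a special case of the lemma and its central Lipschitz claim is false under the actual hypothesis.

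The paper's proof is built exactly around these two issues: it restricts a single simplex to the immersion locus $\mathcal W$ (the complement contributes nothing to the current), deduces $f^*\theta=0$ there by localizing to balls where $f$ is an embedding, concludes that $f$ is CC-Lipschitz only locally on such balls (which suffices to make $T_{f(\mathcal V)}$ a metric rectifiable current), and then treats a general chain via the mass decomposition plus Lemma \ref{add}, finishing with induction on dimension for the boundary. Where your argument does work --- once one knows $f^*\theta=0$ on a set, the CC-Lipschitz bound via horizontal images of segments and the appeal to Ambrosio--Kirchheim pushforwards --- it coincides with the paper's local step; what is missing is the reduction from ``the chain is horizontal as a current'' to ``each map is horizontal where it matters'', which is the actual content of the lemma. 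If you want to salvage your write-up, you should either prove the lemma under the paper's definition by adding these two reductions, or note explicitly that you are proving a weaker statement and check that the stronger hypothesis is available where the lemma is applied (in Corollary \ref{intFF=>intmetric}), which the paper does not assume.
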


\begin{proof}
Case of $C^1$ simplices. Let us handle a slightly more general case. Let $f:\Delta^k \to \he{n}$ be a $C^1$ simplex and $\mathcal{V}\subset\Delta^k$ be an open subset such that the Federer-Fleming current $T_{f(\mathcal{V})}$ is horizontal. Let us show that $T_{f(\mathcal{V})}$ is a metric rectifiable current. By assumption, for every test $k-1$-form $\phi$, 
$$
\int_{\mathcal{V}}f^*(\theta\wedge\phi)= \Scal{T_{f(\mathcal{V})}\res \theta}{\phi }=0.
$$ 
This identity extends to all continuous compactly supported $k-1$-form $\phi$.

Let $\mathcal{W}\subset \mathcal{V}$ be the open subset of points where $f$ is an immersion. Then $f^*(\theta\wedge\phi)=0$ on $\mathcal{V}\setminus \mathcal{W}$, so
\begin{align*}
\int_{\mathcal{W}} f^*(\theta\wedge\phi)=\int_{\mathcal{V}}f^*(\theta\wedge\phi)=0.
\end{align*}
Given a point $x_0\in \mathcal{W}$, let $B$ be an open ball centered at $x_0$, whose closure is contained in $\mathcal{W}$, and which is small enough so that the restriction of $f$ to $B$ is an embedding onto a $C^1$-submanifold $S$ of $\he{n}$. Let $\psi$ be a continuous $k-1$ form with support in $B$. Since $f$ is a $C^1$ diffeomorphism of $B$ onto $S$, the continuous compactly supported $k-1$-form $(f^{-1})^{*}\psi$ on $S$ has a continuous and compactly supported extension $\phi$ to $\he{n}$, hence
\begin{align*}
\int_{B\cap A} f^*(\theta)\wedge\psi=\int_{B\cap A} f^*(\theta\wedge\phi)=0.
\end{align*}
This shows that $f^{*}\theta=0$ almost everywhere on $B$, hence on all of $B$ by density. Therefore $f$ maps every line segment in $B$ of length $\ell$ to a $C^1$ horizontal curve with with length $\le \ell L$, where $L$ is the $C^1$ norm of $f$. By definition of the subRiemannian distance, this shows that $f$ is $L$-Lipschitz on $B$, hence, for every measurable subset $A\subset B$, the corresponding current $T_{f(A)}$ is a metric rectifiable current. It follows that $T_{f(\mathcal{V})}=T_{f(\mathcal{W})}$ is a metric rectifiable current as well. 

General case. Let $S=\sum\lambda_i T_{f_i(\Delta^k)}$ be a $C^1$ chain which is a horizontal Federer-Fleming integral current. This is nearly a mass decomposition, up to the following phenomenon: it may happen that certain simplices $f_i$ and $f_j$ overlap with $|\lambda_i+\lambda_j|<|\lambda_i|+|\lambda_j|$. One can extract a mass decomposition
\begin{align*}
S=\sum\lambda_i T_{f_i(\mathcal{V}_i)},\quad \mc M(S)=\sum_i|\lambda_i|\mc M(T_{f_i(\mathcal{V}_i)}),
\end{align*}
for some open subsets $\mathcal{V}_i\subset \Delta^k$. According to the following Lemma \ref{add}, each $T_{f_i(\mathcal{V}_i)}$ is horizontal, hence a metric rectifiable current. The linear combination $S$ is thus a metric rectifiable current as well. By induction on dimension, $\partial S$ is metric rectifiable, so $S$ is a metric integral current.
\end{proof}

\begin{lemma}\label{add}
Let $\TFF $ be a Federer-Fleming rectifiable current, admitting a mass decomposition of the form
\begin{align*}
\TFF =\sum_{j}T_{j}\quad \text{ with}\quad \mc M(\TFF )=\sum_{j}\mc M(T_{j}).
\end{align*}
If $\TFF  \res \theta=0$, then $T_j \res \theta=0$ for all $j$.
\end{lemma}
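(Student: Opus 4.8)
The plan is to pass from the decomposition of the currents to a decomposition of their polar (Radon--Nikodym) data, and then to invoke strict convexity of the Euclidean norm on $k$-vectors. First I would promote the mass identity to an identity of measures: since the mass measure is subadditive, $\|\TFF\|(A)\le \sum_j\|T_j\|(A)$ for every Borel set $A$, and equality of the total (finite) masses then forces $\|\TFF\|=\sum_j\|T_j\|$ as measures. In particular $\|T_j\|\le\|\TFF\|$, so I can write $\|T_j\|=g_j\,\|\TFF\|$ for Borel densities $g_j\ge 0$ with $\sum_j g_j=1$ $\|\TFF\|$-almost everywhere.

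Next I would use the Riesz representation recalled above for currents of finite mass to write $\TFF=\vec{\TFF}\,\|\TFF\|$ and $T_j=\vec{T}_j\,\|T_j\|$ with $|\vec{\TFF}|=|\vec{T}_j|=1$ almost everywhere. Read as an identity of $k$-vector valued measures --- the interchange of the (countable) sum with integration being legitimate since $\sum_j\mc M(T_j)<\infty$ --- the relation $\TFF=\sum_j T_j$ becomes $\vec{\TFF}=\sum_j g_j\,\vec{T}_j$ $\|\TFF\|$-a.e. The geometric heart is then: for $\|\TFF\|$-a.e. $x$,
\[
1=|\vec{\TFF}(x)|=\Bigl|\sum_j g_j(x)\,\vec{T}_j(x)\Bigr|\le \sum_j g_j(x)=1,
\]
so equality holds in the triangle inequality for a convex combination of unit vectors in the Euclidean space of $k$-vectors; by strict convexity this forces $\vec{T}_j(x)=\vec{\TFF}(x)$ for every index $j$ with $g_j(x)>0$. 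In words: wherever $T_j$ carries mass, its orienting $k$-vector coincides with that of $\TFF$ --- this is precisely the ``no cancellation'' content of a mass decomposition.

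To finish I would recall that $\TFF\res\theta=0$ is equivalent to the pointwise condition $\vec{\TFF}(x)\res\theta=0$ for $\|\TFF\|$-a.e.\ $x$, where $\res$ now denotes interior product (equivalently, the approximate tangent $k$-plane of $\TFF$ lies in $\ker\theta$). Fixing $j$: on $\{g_j>0\}$ one has $\vec{T}_j=\vec{\TFF}$, hence $\vec{T}_j\res\theta=0$; on $\{g_j=0\}$ the measure $\|T_j\|=g_j\,\|\TFF\|$ vanishes. Therefore, using $\scal{\theta\wedge\phi}{\vec{T}_j}=\scal{\phi}{\vec{T}_j\res\theta}$, for every test $(k-1)$-form $\phi$,
\[
\Scal{T_j\res\theta}{\phi}=\int\scal{\theta\wedge\phi}{\vec{T}_j}\,d\|T_j\|=\int\scal{\phi}{\vec{T}_j\res\theta}\,g_j\,d\|\TFF\|=0,
\]
so $T_j\res\theta=0$.

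The argument is soft throughout; the only points needing care are the countable-sum bookkeeping in the vector-measure identity and the (standard) equivalence between $T\res\theta=0$ and the $\|T\|$-a.e.\ vanishing of the contracted orienting vector. The rectifiable/finite-mass hypothesis is used only to have the polar decomposition at hand, after which everything reduces to the strict-convexity observation --- which I expect to be the conceptual crux rather than a genuine obstacle.
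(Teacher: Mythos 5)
Your argument is correct, but it takes a different route from the paper's. The paper first reduces to two summands, invokes the rectifiable representation $T_i=(\mathcal{H}^k\res X_i)\wedge\Theta_i\vec T_i$ with unit \emph{simple} orienting $k$-vectors and integer densities, splits the current over $X_1\setminus X_2$, $X_2\setminus X_1$ and $X_1\cap X_2$, kills the $\theta$-restriction of each piece by a sign-adjusted test-form argument, and only then identifies the orientations on the overlap; the general case is handled by grouping $T_{j_0}$ against $\sum_{j\neq j_0}T_j$. You instead work at the level of the mass measures: subadditivity plus equality of total masses gives $\|\TFF\|=\sum_j\|T_j\|$, Radon--Nikodym densities $g_j$ and the Riesz polar decomposition give $\vec T_{FF}=\sum_j g_j\vec T_j$ a.e., and Cauchy--Schwarz/strict convexity identifies $\vec T_j=\vec T_{FF}$ wherever $g_j>0$, after which $T_j\res\theta=0$ is immediate. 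This treats all (countably many) summands at once, avoids the rectifiable-set bookkeeping, and in fact only uses finite mass plus the representation $T=\vec T\,\|T\|$, so it is slightly more general and cleaner. The one point to flag explicitly is the norm convention: your strict-convexity step needs the pointwise norm on $k$-vectors dual to the norm used in the definition of $\mc M$ to be strictly convex. With the paper's Riemannian (Euclidean) norm on forms --- the convention under which the quoted Riesz representation asserts $|\vec T_{FF}|=1$ a.e. --- this is the Euclidean norm on $\bigwedge_k\mfrak h$ and the step is fine; with the classical comass-based mass it would fail for non-simple vectors (e.g. $|e_{12}+e_{34}|$ has mass norm $2$), and one would then need the simplicity of the $\vec T_j$ coming from rectifiability, which is exactly the extra structure the paper's proof keeps in play.
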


\begin{proof}
Case of two summands, $T_0=T_1+T_2$. Since $T_1$ and $T_2$ are rectifiable currents, there exist $\mathcal{H}^{k}$-rectifiable sets $X_1$, $X_2$ and $\mathcal{H}^{k}$-measurable unit simple $k$-vectorfields $\vec T_1$ and $\vec T_2$ and nonnegative integer valued densities $\Theta_i$ such that 
\begin{align*}
\forall i=0,1,2, \quad T_i=(\mathcal{H}^k\res X_i)\wedge\Theta_i \vec T_i,\quad \mc M(T_i)=\int_{X_i}\Theta_i\,d\mathcal{H}^k.
\end{align*}
Furthermore, up to $\mathcal{H}^{k}$-measure $0$, $X_0\subset X_1\cup X_2$. Let 
\begin{align*}
S_1=(\mathcal{H}^k &\res (X_1\setminus X_2))\wedge\Theta_1 \vec T_1, \quad S_2=(\mathcal{H}^k\res (X_2\setminus X_1))\wedge\Theta_2 \vec T_2,\\
\quad S_{12}&=(\mathcal{H}^k\res (X_1\cap X_2))\wedge(\Theta_1 \vec T_1+\Theta_2 \vec T_2).
\end{align*}
Since these currents are supported on disjoint sets,
\begin{align*}
T_0=S_1+S_2+S_{12}, \quad \text{with}\quad \mc M(T_0)=\mc M(S_1)+\mc M(S_2)+\mc M(S_{12}).
\end{align*}
These currents being representable by integration, one can evaluate them on bounded Borel differential forms.
Let $\phi_1,\phi_2,\phi_{12}$ be arbitrary bounded Borel $k-1$-forms supported on $X_1$, $X_2$ and $X_1\cap X_2$ respectively. Up to changing their signs, one can assume that
\begin{align*}
\Scal{S_1 \res\theta}{\phi_1 }\ge 0,\quad \Scal{S_2 \res\theta}{\phi_2 }\ge 0,\quad\Scal{S_{12} \res\theta}{\phi_{12} }\ge 0.
\end{align*}
Then
\begin{align*}
\Scal{S_1 \res\theta}{\phi_1 }&+\Scal{S_2 \res\theta}{\phi_2 }+\Scal{S_{12} \res\theta}{\phi_{12} }\\
&=\Scal{T_0 \res\theta}{\phi_1+\phi_2+\phi_{12} }= 0.
\end{align*}
This implies that
\begin{align*}
\Scal{S_1 \res\theta}{\phi_1 }= \Scal{S_2 \res\theta}{\phi_2 }=\Scal{S_{12} \res\theta}{\phi_{12} }= 0,
\end{align*}
hence
\begin{align*}
S_1 \res\theta=0,\quad S_2 \res\theta=0, \quad S_{12} \res\theta=0.
\end{align*}
On $X_1\cap X_2$, the restriction of $T_0$ is $S_{12}$, so
\begin{align*}
\Theta_0 \vec T_0=\Theta_1 \vec T_1+\Theta_2 \vec T_2
\end{align*}
$\mathcal{H}^k$-almost everywhere on $X_1\cap X_2$. It follows that $\vec T_0=\vec T_1=\vec T_2$ $\mathcal{H}^k$-almost everywhere on $X_1\cap X_2$, hence
\begin{align*}
T_1 \res (X_1\cap X_2)=S_{12} \res \frac{\Theta_1}{\Theta_0}.
\end{align*}
One concludes that
\begin{align*}
T_1 \res \theta&=S_1 \res \theta +(S_{12} \res \frac{\Theta_1}{\Theta_0}) \res \theta\\
&=S_1 \res \theta +(S_{12} \res \theta)\res \frac{\Theta_1}{\Theta_0}=0.
\end{align*}
General case. For every $j_0$, let $T_1=T_{j_0}$ and $T_2=\sum_{j\not=j_0}T_j$. These are rectifiable currents, $\mc M(\TFF)=\mc M(T_1)+\mc M(T_2)$. Applying the special case of two summands, one gets $T_{j_0} \res \theta=0$.
\end{proof}

\subsection{Nearly horizontal $C^1$ chains}

We use Federer's Approximation Theorem to approximate (horizontal) Federer-Fleming integral currents with (nearly horizontal) $C^1$ chains.

\begin{proposition}[Federer's Approximation Theorem]\label{approx}
Let $V$ be a Riemannian manifold. $C^1$ chains are normal mass dense in Federer-Fleming integral currents. Furthermore, supports are under control.
\end{proposition}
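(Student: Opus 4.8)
The statement is classical: it is Federer's polyhedral approximation theorem (\cite{federer}, 4.2.20), combined with a smoothing step and transported from $\R^N$ to a Riemannian manifold. The plan is to reduce to the Euclidean case. First I would fix a Nash isometric embedding $\iota\colon V\hookrightarrow\R^N$, a tubular neighbourhood $U$ of $\iota(V)$, and a smooth retraction $\pi\colon U\to V$ with $\pi\circ\iota=\mathrm{id}_V$. An integral current $T$ on $V$ has compact support, so $\iota_\#T$ is an integral current on $\R^N$ supported on the compact set $\iota(\supp T)$, which lies at positive distance $\delta$ from $\R^N\setminus U$. (Here $\mc M(\iota_\#T)=\mc M(T)$ and $\mc M(\partial\iota_\#T)=\mc M(\partial T)$ because $\iota$ is isometric.)

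Next I would invoke the Euclidean deformation theorem in its polyhedral form: for every $\eta\in(0,\delta)$ there are an integral polyhedral chain $P_0$ and integral currents $R,S$ in $\R^N$ with $\iota_\#T-P_0=\partial R+S$, $\mc M(R)+\mc M(S)<\eta$, $\mc M(P_0)\le\mc M(T)+\eta$, $\mc M(\partial P_0)\le\mc M(\partial T)+\eta$, and $\supp P_0$ contained in the $\eta$-neighbourhood of $\iota(\supp T)$, hence in $U$. Set $P:=\pi_\#P_0$. Parametrising each simplex of $P_0$ by an affine map out of the standard simplex and composing with the smooth map $\pi$ exhibits $P$ as a $C^1$ (indeed $C^\infty$) chain in $V$ in the sense of Definition \ref{defC1chain}; its boundary $\partial P=\pi_\#\partial P_0$ is likewise a $C^1$ chain.

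It then remains to collect the estimates. Because $\pi$ is a retraction onto $\iota(V)$, along $\iota(V)$ the differential $D\pi$ is the orthogonal projection onto $T\iota(V)$ followed by an isometry, so $\|D\pi\|=1$ on $\iota(V)$ and $\|D\pi\|\le 1+C\eta$ on the $\eta$-tube; hence $\pi$ distorts $k$-dimensional volumes on $\supp P_0$ by a factor $1+O(\eta)$, where $k=\dim T$. Using $\pi_\#\iota_\#T=T$ and $\pi_\#\partial=\partial\pi_\#$ one gets $\mc M(P)\le(1+O(\eta))(\mc M(T)+\eta)$, $\mc M(\partial P)\le(1+O(\eta))(\mc M(\partial T)+\eta)$, and $T-P=\partial(\pi_\#R)+\pi_\#S$ with $\mc M(\pi_\#R)+\mc M(\pi_\#S)=O(\eta)$; together with lower semicontinuity of mass this yields $\mc M(P)\to\mc M(T)$, $\mc M(\partial P)\to\mc M(\partial T)$ and flat convergence $P\to T$ as $\eta\to 0$, while $\supp P\subset\pi(\supp P_0)$ is a neighbourhood of $\supp T$ shrinking to $\supp T$. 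The only genuinely delicate point is that the approximant be a $C^1$ chain of the manifold and not merely of $\R^N$, which the retraction settles; the simultaneous control of mass, boundary mass and support is inherited verbatim from the Euclidean statement. For $V=\he n$ one may alternatively read $P$ off the Random Deformation Theorem \ref{deformation} applied to a smooth triangulation, whose chains are $C^1$ chains.
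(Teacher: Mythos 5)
There is a genuine gap: what you establish is weaker than what the proposition asserts and than what the paper actually uses. ``Normal mass dense'' means density in the norm $N(T-P)=\mc M(T-P)+\mc M(\partial T-\partial P)$; this is exactly how Proposition \ref{approx} is invoked in Corollary \ref{intFF=>intmetric}, where $\eps_j=N(\TFF-S_j)$ is used to bound $\mc M(S_j\res\theta)\le\mc M(S_j-\TFF)\le\eps_j$. Your construction only yields $T-P=\partial(\pi_\#R)+\pi_\#S$ with $\mc M(\pi_\#R)+\mc M(\pi_\#S)=O(\eta)$, i.e.\ flat-norm closeness, together with $\mc M(P)\to\mc M(T)$ and $\mc M(\partial P)\to\mc M(\partial T)$. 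Flat closeness plus convergence of masses does not control $\mc M(T-P)$: two parallel unit segments at distance $\eta$ are flat-close and have equal mass, yet the mass of their difference is $2$. In addition, the Euclidean input you quote is a hybrid: the deformation theorem gives a decomposition $\iota_\#T-P_0=\partial R+S$ only with multiplicative constants $c>1$ in the mass bounds, not $\mc M(P_0)\le\mc M(T)+\eta$, while the result with additive-$\eta$ bounds is the strong approximation theorem (Federer 4.2.20), whose conclusion is of a different (and stronger) nature than the one you use.

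The repair is to use 4.2.20 as actually stated, which is what the paper implicitly relies on (it gives no proof and cites Federer). That theorem produces a $C^1$ diffeomorphism $f$ of $\R^N$ with $\mathrm{Lip}(f),\mathrm{Lip}(f^{-1})\le 1+\eps$, equal to the identity away from the support, and a polyhedral chain $P_0$ with $\mc M(f_\#\iota_\#T-P_0)\le\eps$ and $\mc M\bigl(\partial(f_\#\iota_\#T-P_0)\bigr)\le\eps$. Then $f^{-1}_\#P_0$ is no longer polyhedral but it \emph{is} a $C^1$ chain --- this is precisely why the proposition is phrased for $C^1$ chains --- and $N(\iota_\#T-f^{-1}_\#P_0)\le(1+\eps)^{k}\,2\eps$. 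Composing with your tubular-neighbourhood retraction $\pi$ (your Nash-embedding reduction and the support control are fine) gives a $C^1$ chain $\pi_\#f^{-1}_\#P_0$ in $V$ with $N(T-\pi_\#f^{-1}_\#P_0)$ small, which is the assertion; the Lipschitz bounds on $f$ even give the $C^1_\eps$ refinement mentioned after the proposition. Note that your closing alternative via the Random Deformation Theorem \ref{deformation} has the same defect: it produces $P(S)$ with $\E(\mc M(P(S)))\le C(\mc M(S)+\eps\,\mc M(\partial S))$, $C>1$, and $S-P(S)$ small only in the flat sense, so it cannot yield normal-mass density either.
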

In fact, Federer's Approximation Theorem  implies that, for every $\eps>0$, $C^1_{\eps}$ chains are normal mass dense in Federer-Fleming integral currents, but we shall not need this strong form.

One need pass from nearly horizontal to truly horizontal chains. This is performed using a trick due to R. Young, combined with the sharp quantitative estimate on how masses are affected by Heisenberg dilations $\mathfrak s_\lambda$, provided by Lemma \ref{rescale}.

\begin{lemma}
\label{apprhoriz}
For $\lambda\ge 1$, there exist random additive operators $P_\lambda$ and $Q_\lambda$ on $C^1$ chains such that
\begin{enumerate}
  \item $\partial P_\lambda=P_\lambda\partial$.
  \item $P_\lambda+\partial Q_\lambda+Q_\lambda \partial$ is the identity map.
  \item The image of $P_\lambda$ is contained in the subgroup of horizontal $C^1$ chains.
  \item For every $C^1$ chain $S$, 
  \begin{align*}
  \E(\mc M(P_\lambda(S)))&\le C\, (\lambda\,\mc M(S\res\theta)+\mc M(S)+\mc M(\partial S\res\theta)+\lambda^{-1}\mc M(\partial S)),\\
  \E(\mc M(Q_\lambda(S)))&\le C\,(\mc M(S\res\theta)+\lambda^{-1}\mc M(S)).
\end{align*}
\end{enumerate}
\end{lemma}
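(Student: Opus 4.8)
The plan is to produce $P_\lambda$ and $Q_\lambda$ by conjugating a single ``deformation to horizontal position'' with the anisotropic dilation $\mathfrak s_\lambda$, and then to read off the mass bounds from Lemma \ref{rescale}. First I would fix, once and for all, a unit-scale $(\eps_0,L(n))$-triangulation $\kappa$ of $\he n$ (say $\eps_0=1$) and take the random chain homotopy $(P,Q)$ between $C^1$ chains and chains of $\kappa$ furnished by Proposition \ref{deformation}. I would then combine it with R. Young's deformation of triangulation chains to horizontal chains, \cite{YoungLowDim}: on $I_\kappa$ there are additive operators $(Y,R)$ with $\partial Y=Y\partial$, $Y+\partial R+R\partial=\mathrm{Id}$, $Y$ valued in horizontal $C^1$ chains, and — the triangulation being at unit scale — $\mc M(Y(c))+\mc M(R(c))\le C(n)\,\mc M(c)$. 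Composing the two homotopies in the standard way produces additive operators $\bar P:=YP$ and $\bar Q:=RP+Q$, preserving the $C^1$ class, with $\bar P+\partial\bar Q+\bar Q\partial=\mathrm{Id}$ on $C^1$ chains, $\partial\bar P=\bar P\partial$, $\bar P$ valued in horizontal $C^1$ chains, $\E(\mc M(\bar P(S)))\le C(\mc M(S)+\mc M(\partial S))$, and — the one estimate that really has to be checked — $\E(\mc M(\bar Q(S)))\le C\mc M(S)$, \emph{with no boundary term}.

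Next I would set
\[
P_\lambda:=(\mathfrak s_{1/\lambda})_{\#}\circ\bar P\circ(\mathfrak s_\lambda)_{\#},\qquad Q_\lambda:=(\mathfrak s_{1/\lambda})_{\#}\circ\bar Q\circ(\mathfrak s_\lambda)_{\#}.
\]
The dilations $(\mathfrak s_\lambda)_{\#}$ and $(\mathfrak s_{1/\lambda})_{\#}$ are mutually inverse, additive, preserve the $C^1$ class, commute with $\partial$, and (being group automorphisms with $\mathfrak s_\lambda^{*}\theta=\lambda^2\theta$) send horizontal chains to horizontal chains. Hence properties (1)--(3) follow at once from those of $\bar P,\bar Q$: $P_\lambda+\partial Q_\lambda+Q_\lambda\partial=(\mathfrak s_{1/\lambda})_{\#}(\bar P+\partial\bar Q+\bar Q\partial)(\mathfrak s_\lambda)_{\#}=\mathrm{Id}$, $\partial P_\lambda=P_\lambda\partial$, and the image of $P_\lambda$ lies in the horizontal $C^1$ chains. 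For property (4) I would feed Lemma \ref{rescale} into this. For a $k$-chain $S$, $\mc M((\mathfrak s_\lambda)_{\#}S)\le\lambda^{k+1}\mc M(S\res\theta)+\lambda^k\mc M(S)$ and $\mc M((\mathfrak s_\lambda)_{\#}\partial S)\le\lambda^{k}\mc M(\partial S\res\theta)+\lambda^{k-1}\mc M(\partial S)$, hence $\E(\mc M(\bar P((\mathfrak s_\lambda)_{\#}S)))\le C(\lambda^{k+1}\mc M(S\res\theta)+\lambda^{k}\mc M(S)+\lambda^{k}\mc M(\partial S\res\theta)+\lambda^{k-1}\mc M(\partial S))$.

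The decisive asymmetry is that $\bar P((\mathfrak s_\lambda)_{\#}S)$ is horizontal, so $\bar P((\mathfrak s_\lambda)_{\#}S)\res\theta=0$ and Lemma \ref{rescale} applied to $\mathfrak s_{1/\lambda}$ costs only the factor $\lambda^{-k}$ — one power below the generic $\lambda^{-k-1}$. Multiplying through gives exactly the asserted bound on $\E(\mc M(P_\lambda(S)))$, the term $\lambda\,\mc M(S\res\theta)$ being the surviving $\lambda^{-k}\cdot\lambda^{k+1}\mc M(S\res\theta)$. On the other hand $\bar Q((\mathfrak s_\lambda)_{\#}S)$ is a $(k+1)$-chain, so for $\lambda\ge1$ Lemma \ref{rescale} gives $\mc M((\mathfrak s_{1/\lambda})_{\#}\bar Q((\mathfrak s_\lambda)_{\#}S))\le 2\lambda^{-(k+1)}\mc M(\bar Q((\mathfrak s_\lambda)_{\#}S))$; combined with the boundary-free bound $\E(\mc M(\bar Q((\mathfrak s_\lambda)_{\#}S)))\le C\mc M((\mathfrak s_\lambda)_{\#}S)\le C(\lambda^{k+1}\mc M(S\res\theta)+\lambda^k\mc M(S))$, this yields $\E(\mc M(Q_\lambda(S)))\le C(\mc M(S\res\theta)+\lambda^{-1}\mc M(S))$.

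The hard part, as flagged, is the boundary-free homotopy estimate $\E(\mc M(\bar Q(S)))\le C\mc M(S)$. The summand $Q$ is immediate from property (5) of Proposition \ref{deformation}. The summand $RP$ is the obstacle: the crude bound $\mc M(R(P(S)))\le C\mc M(P(S))\le C(\mc M(S)+\mc M(\partial S))$ reintroduces $\mc M(\partial S)$, and after conjugation by $\mathfrak s_\lambda$ this would spoil the $Q_\lambda$ estimate with spurious terms of size $\lambda^{-1}\mc M(\partial S\res\theta)+\lambda^{-2}\mc M(\partial S)$. Removing it requires using the simplexwise, local nature of Young's construction — the deformation track of a simplex depends only on that simplex, so the tracks of the ``boundary excess'' simplices of $P(S)$ clustered near $\partial S$ can be reabsorbed into the Federer--Fleming homotopy term — or, equivalently, restructuring the proof as an induction on $k$ in which $\partial S$ is first deformed to a horizontal cycle using $(P_\lambda,Q_\lambda)$ in dimension $k-1$ and $S$ is then deformed relative to its now horizontal boundary. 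The remaining verifications (additivity, support control, membership in the $C^1$ class) are inherited termwise from Proposition \ref{deformation} and \cite{YoungLowDim}.
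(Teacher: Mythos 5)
Your global strategy is exactly the paper's: build a fixed-scale random chain homotopy onto horizontal $C^1$ chains by combining Proposition \ref{deformation} with R.~Young's horizontalization, then conjugate by $\mathfrak s_\lambda$ and read off (4) from Lemma \ref{rescale}. The conjugation and scaling computations you give (the factor $\lambda^{-k}$ for the horizontal image of $P_\lambda$, the factor $\lambda^{-k-1}$ for the $(k+1)$-dimensional homotopy term) are correct and coincide with the paper's. But the proof has a genuine gap at precisely the point you flag: the boundary-free estimate $\E(\mc M(\bar Q(S)))\le C\,\mc M(S)$. With your choice $\bar Q:=RP+Q$, the term $RP$ forces you through $\E(\mc M(P(S)))\le C(\mc M(S)+\mc M(\partial S))$, and the two remedies you sketch (reabsorbing the ``boundary excess'' tracks, or an induction deforming $\partial S$ first, relative to which $S$ is then deformed) are not carried out and are not routine; as written, your construction only yields $\E(\mc M(Q_\lambda(S)))\le C(\mc M(S\res\theta)+\lambda^{-1}\mc M(S)+\lambda^{-1}\mc M(\partial S\res\theta)+\lambda^{-2}\mc M(\partial S))$, which is weaker than item (4) of the Lemma (it would still suffice for Corollary \ref{intFF=>intmetric}, but it does not prove the Lemma as stated).

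The paper's way around this is simple and worth internalizing: Young's horizontalization is not just an operator on simplicial chains, it is induced by a globally defined periodic map $\phi$ of $\he n$, smooth, Lipschitz and horizontal on the $\le n$-skeleton, which moves points a bounded distance. Hence the affine homotopy from the identity to $\phi_\#$ gives a \emph{deterministic} operator $Q_1$, defined on all chains, with $1=\phi_\#+\partial Q_1+Q_1\partial$ and $\mc M(Q_1(S))\le C\,\mc M(S)$. One then sets $P_\phi:=\phi_\#\circ P_0$ and $Q_\phi:=Q_1+\phi_\#\circ Q_0$ (not the standard composite $R P_0+Q_0$): since $\phi_\#$ commutes with $\partial$, the homotopy identity $\partial Q_\phi+Q_\phi\partial=(1-\phi_\#)+\phi_\#(1-P_0)=1-P_\phi$ holds, and now the second homotopy is applied to $S$ itself rather than to $P_0(S)$, so $\E(\mc M(Q_\phi(S)))\le \mc M(Q_1(S))+C\,\E(\mc M(Q_0(S)))\le C\,\mc M(S)$ with no boundary term. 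Substituting this fixed-scale pair into your conjugation argument closes the gap and yields exactly the stated bounds.
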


\begin{proof}
We follow the first step of R. Young's proof of his filling bound, Section 3 in \cite{YoungLowDim}. R. Young uses a periodic triangulation $\tau$ of $\he{n}$. We apply the Random Deformation Theorem and write
$$
S=P_0(S)+\partial Q_0(S)+Q_0(\partial S),
$$
where $P_0(S)$ is a random simplicial chain of $\tau$, $P_0$ and $Q_0$ satisfy linear bounds on expected masses. Then R. Young constructs a periodic self map $\phi$ of $\he{n}$ which is smooth, Lipschitz and horizontal on $n$-simplices of $\tau$. Since $\phi$ moves points a bounded distance away, 
\begin{align*}
1=\phi_\# + \partial Q_1+Q_1\partial,
\end{align*}
where $Q_1$ is additive and deterministically mass-bounded.
Let $P_\phi=\phi_\# \circ P_0$ and $Q_\phi=Q_1+\phi_\# Q_0$.
Then
$$
S=P_\phi(S)+\partial Q_\phi(S)+Q_\phi(\partial S),
$$
where $P_\phi(S)$ is horizontal,
\begin{align*}
 \E(\mc M(P(S)))\le C\, (\mc M(S)+\mc M(\partial S))
\end{align*} and
\begin{align*}
\E(\mc M(Q_\phi(S)))\le C_2 \mc M(S),\quad
\E(\mc M(Q_\phi(\partial S)))\le C_2 \mc M(\partial S).
\end{align*}
This construction is applied to a Heisenberg dilate $(\mathfrak s_\lambda)_{\#}S$, 
\begin{align*}
(\mathfrak s_\lambda)_{\#}S=P_\phi((\mathfrak s_\lambda)_{\#}S)+\partial Q_\phi((\mathfrak s_\lambda)_{\#}S)+Q_\phi(\partial (\mathfrak s_\lambda)_{\#}S),
\end{align*}
leading to
\begin{align*}
S=P_\lambda(S)+\partial Q_\lambda(S)+Q_\lambda(\partial S),
\end{align*}
where $P_\lambda(S):=(\mathfrak s_{1/\lambda})_{\#}P_\phi((\mathfrak s_\lambda)_{\#}S)$ is horizontal and
\begin{align*}
\E(\mc M(P_\lambda S)))&=\lambda^{-k}\E(\mc M(P_\phi((\mathfrak s_\lambda)_{\#}S))\\
&\le C_2 \lambda^{-k}(\mc M((\mathfrak s_\lambda)_{\#}S)+\mc M(\partial (\mathfrak s_\lambda)_{\#}S))\\
&\le C_2 \lambda^{-k}(\lambda^{k+1}\mc M(S\res\theta)+\lambda^{k}\mc M(S)\\&\hskip2cm +\lambda^{k}\mc M(\partial S\res\theta)+\lambda^{k-1}\mc M(\partial S))\\
&\le C_2 (\lambda\,\mc M(S\res\theta)+\mc M(S)+\mc M(\partial S\res\theta)+\lambda^{-1}\mc M(\partial S)).
\end{align*}
$Q_\lambda(S):=(\mathfrak s_{1/\lambda})_{\#}Q_\phi((\mathfrak s_\lambda)_{\#}S)$ satisfies
\begin{align*}
\E(\mc M(Q_\lambda (S)))
&\le \lambda^{-k-1}\E(\mc M(Q_\phi((\mathfrak s_\lambda)_{\#}S)))\\
&\le C_2 \lambda^{-k-1}\mc M((\mathfrak s_\lambda)_{\#}S)\\
&\le C_2 \lambda^{-k-1}(\lambda^{k+1}\mc M(S\res\theta)+\lambda^{k}\mc M(S))\\
&=C_2 (\mc M(S\res\theta)+\lambda^{-1}\mc M(S)),
\end{align*}
as claimed.
\end{proof}

\begin{corollary}\label{intFF=>intmetric}
Let $\TFF $ be a Federer-Fleming horizontal integral current. The current $\TAK=\widetilde{\TFF }$ is a metric integral current and it satisfies $\widehat{\TAK}=\TFF $.
\end{corollary}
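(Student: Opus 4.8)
The plan is to realize the metric current $\TAK$ associated with $\TFF$ by the construction of Section \ref{FF->AK} (which is legitimate because a Federer--Fleming integral current is normal and horizontal, so $\TAK$ is a normal metric current) as a weak limit of metric \emph{integral} currents having uniformly bounded normal mass, and then to invoke the closure theorem for metric integral currents of \cite{AK_acta}. The part of the statement relating $\TAK$ back to $\TFF$ — that the construction of Section \ref{AK->FF} applied to $\TAK$ returns $\TFF$ — is the easy one: in the exponential coordinates $x^i$ a test $k$-form reads $\omega=\sum_I\omega_I\,dx^I$, and since the coefficients $\omega_I$ and the coordinate functions $x^i$ are smooth the limit defining the value of $\TAK$ on $(\omega_I,x_{i_1},\dots,x_{i_k})$ is trivial and equals $\Scal{\TFF}{\omega_I\,dx^I}$; summing over $I$ gives exactly $\Scal{\TFF}{\omega}$, so I would dispose of this in one line.

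For integrality I would first approximate $\TFF$ by $C^1$ chains and then straighten them to horizontal ones. By Federer's Approximation Theorem (Proposition \ref{approx}) there are $C^1$ chains $R_j$ with $\mc M(\TFF-R_j)+\mc M(\partial\TFF-\partial R_j)\to 0$; hence $\mc M(R_j)+\mc M(\partial R_j)$ stays bounded, the $R_j$ have support in a fixed compact set, $R_j\to\TFF$ and $\partial R_j\to\partial\TFF$ weakly, and — contraction with $\theta$ being continuous for the mass norm while $\TFF\res\theta=0$ and $\partial\TFF\res\theta=0$ — one has $\mc M(R_j\res\theta)\to0$ and $\mc M(\partial R_j\res\theta)\to0$. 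Since the $R_j$ are only nearly horizontal, I would now apply Lemma \ref{apprhoriz} with a parameter $\lambda_j\to\infty$ chosen to grow slowly enough that $\lambda_j\bigl(\mc M(R_j\res\theta)+\mc M(\partial R_j\res\theta)\bigr)\to0$; from the mass bounds of Lemma \ref{apprhoriz} (together with $\partial P_{\lambda_j}=P_{\lambda_j}\partial$ and $\partial^2=0$) the expected masses of $P_{\lambda_j}(R_j)$ and of $\partial P_{\lambda_j}(R_j)$ remain bounded while those of $Q_{\lambda_j}(R_j)$ and $Q_{\lambda_j}(\partial R_j)$ tend to $0$. By Markov's inequality I would fix, for each $j$, a deterministic realization producing horizontal $C^1$ chains $S_j:=P_{\lambda_j}(R_j)$ with $\sup_j\bigl(\mc M(S_j)+\mc M(\partial S_j)\bigr)<\infty$, supports in a fixed compact set, and $\mc M(Q_{\lambda_j}(R_j))+\mc M(Q_{\lambda_j}(\partial R_j))\to0$; then $R_j-S_j=\partial Q_{\lambda_j}(R_j)+Q_{\lambda_j}(\partial R_j)\to0$ weakly, so $S_j\to\TFF$ weakly.

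The last step passes to metric currents. By Lemma \ref{C1=>metric} each horizontal $C^1$ chain $S_j$ is a metric integral current, and it coincides with the metric current $\widehat{S_j}$ of Section \ref{FF->AK}: the two agree on smooth tuples $(g,\pi_1,\dots,\pi_k)$, both equalling $\Scal{S_j}{g\,d\pi_1\wedge\cdots\wedge d\pi_k}$, hence agree on all Lipschitz tuples by the continuity axiom and density of smooth data with equibounded Lipschitz constants. Now Lemma \ref{unifcont} makes the limit $\Scal{T}{f^\eps\,d\pi_1^\eps\wedge\cdots}\to\widehat T(f,\pi_1,\dots,\pi_k)$, as $\eps\to0$, \emph{uniform} over horizontal currents $T$ with $\mc M(T)+\mc M(\partial T)$ bounded by a fixed constant; coupling this uniformity with the weak convergence $S_j\to\TFF$ tested against the fixed smooth forms $f^\eps\,d\pi_1^\eps\wedge\cdots$, a three-$\eps$ argument yields $\widehat{S_j}\to\TAK$ weakly as metric currents. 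Since $\mc M(\widehat{S_j})+\mc M(\partial\widehat{S_j})\le C(n)\bigl(\mc M(S_j)+\mc M(\partial S_j)\bigr)$ stays bounded, the closure theorem of \cite{AK_acta} applies and $\TAK$ is a metric integral current; its boundary is then integral as well.

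I expect the main obstacle to be the controlled horizontalization. Federer's theorem only provides nearly horizontal $C^1$ chains, and turning them into genuinely horizontal ones by R. Young's trick costs mass proportional to a Heisenberg dilation factor $\lambda$ (Lemma \ref{rescale}, Lemma \ref{apprhoriz}); the scheme succeeds precisely because of the balance ``$\lambda_j\to\infty$ and yet $\lambda_j\,\mc M(R_j\res\theta)\to0$'', which is available only because the non-horizontal part $\mc M(R_j\res\theta)$ of the approximants is small \emph{relative to} a dilation rate we are free to choose. A secondary, and also delicate, point is that weak convergence of Federer--Fleming currents does not automatically transfer to the associated metric currents; this is exactly what the uniform modulus of continuity of Lemma \ref{unifcont} over mass-bounded families is needed for.
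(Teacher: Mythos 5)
Your proposal is correct and follows essentially the same route as the paper: Federer approximation by $C^1$ chains, horizontalization via Lemma \ref{apprhoriz} with the dilation parameter $\lambda_j$ balanced against the horizontality defect, Lemma \ref{C1=>metric} to see the horizontal chains as metric integral currents, and the Ambrosio--Kirchheim compactness/closure theorem with uniform normal mass and support bounds. The only (minor) divergence is at the final step, where you fix the limit a priori as $\widehat{\TFF}$ and upgrade Federer--Fleming weak convergence to metric weak convergence through the uniformity in Lemma \ref{unifcont}, while the paper extracts a subsequential metric limit by compactness and identifies it afterwards by testing on smooth data.
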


\begin{proof}
Let $\TFF $ be a Federer-Fleming horizontal integral current. According to the Approximation Theorem (Proposition \ref{approx}), there exist $C^1$ chains $S_j$ such that $\eps_j:=N(\TFF -S_j)\to 0$. Since $\TFF $ is horizontal,
\begin{align*}
\mc M(S_j\res\theta)\le\eps_j, \quad &\mc M(\partial S_j\res\theta)\le\eps_j,\\
\mc M(S_j)\le \mc M(\TFF )+\eps_j,\quad &\mc M(\partial S_j)\le \mc M(\partial \TFF )+\eps_j.
\end{align*}
Let $\lambda_j=\eps_j^{-1}$. Lemma \ref{apprhoriz} expresses 
\begin{align*}
S_j=P_{\lambda_j}(S_j)+\partial Q_{\lambda_j}(S_j)+Q_{\lambda_j}(\partial S_j),
\end{align*}
with $P_{\lambda_j}(S_j)$ horizontal and
\begin{align*}
\mc M(P_{\lambda_j}(S_j))&\le C\,(\lambda_j\,\mc M(S_j\res\theta)+\mc M(S_j)+\mc M(\partial S_j\res\theta)+\lambda_j^{-1}\mc M(\partial S_j))\\
&\le C\,(1+\mc M(\TFF )+2\eps_j+\eps_j(\mc M(\partial \TFF )+\eps_j)),
\end{align*}
stays bounded, as well as
\begin{align*}
\mc M(\partial P_{\lambda_j}(S_j))&=\mc M(P_{\lambda_j}(\partial S_j))\\
&\le C\,(\lambda_j\,\mc M(\partial S_j\res\theta)+\mc M(\partial S_j))\\
&\le C\,(1+\mc M(\partial \TFF )+\eps_j).
\end{align*}
On the other hand, 
\begin{align*}
\E(\mc M(Q_{\lambda_j}(S_j)))&\le C\,(\mc M(S_j\res\theta)+\eps_j \mc M(\partial S_j))\\
&\le C\eps_j \,(1+\mc M(\TFF )+\eps_j)
\end{align*}
and
\begin{align*}
\E(\mc M(Q_{\lambda_j}(\partial S_j)))&\le C\,(\mc M(\partial S_j\res\theta)+\eps_j \mc M(S_j))\\
&\le C\eps_j (1+\mc M(\partial \TFF )+\eps_j)
\end{align*}
both tend to $0$.

Let us fix random choices which achieve the above expected bounds. This provides us with horizontal $C^1$ chains $P_{\lambda_j}(S_j)$ which converge in FF-flat norm to $\TFF $. According to Lemma \ref{C1=>metric}, $P_{\lambda_j}(S_j)$ is a metric integral current such that $N(P_{\lambda_j}(S_j))$ is bounded. Furthermore, their supports remain in a neighborhood of the support of $\TFF $. By the Compactness Theorem for metric integral currents (Theorems 5.2 and 8.5 in \cite{AK_acta}), some subsequence of $(P_{\lambda_j}(S_j))$ converges weakly to some metric integral current $\TAK$. In particular, it converges on all smooth data $f\,d\pi_1\wedge\cdots\wedge d\pi_k$, so $\widehat{\TAK}=\TFF $ and $\TAK=\widetilde{\TFF }$. This shows that the correspondence between normal currents in both theories maps integral currents to integral currents. 

\end{proof}

\section*{Acknowledgements}

B.F. is supported by the University of Bologna, funds for selected research topics.

P.P. is supported by Agence Nationale de la Recherche, ANR-22-CE40-0004 GOFR.

\bibliographystyle{amsplain}

\bibliography{currents_25_11_20_PP}

\bigskip
\tiny{
\noindent
Bruno Franchi 
\par\noindent
Universit\`a di Bologna, Dipartimento
di Matematica\par\noindent Piazza di
Porta S.~Donato 5, 40126 Bologna, Italy.
\par\noindent
e-mail:
bruno.franchi@unibo.it.
}

\medskip

\tiny{
\noindent
Pierre Pansu 
\par\noindent  Universit\'e Paris-Saclay, CNRS, Laboratoire de math\'ematiques d'Orsay
\par\noindent  91405, Orsay, France.
\par\noindent 
e-mail: pierre.pansu@universite-paris-saclay.fr
}

\bigskip

Keywords: current; Heisenberg group; sub-Riemannian metric; contact manifold; differential form

MSC 2000: 
28A75; % Length, area, volume, other geometric measure theory 
53C17; % Sub-Riemannian geometry
53C42; % Differential geometry of immersions
53D10; % Contact manifolds
58A10; % Differential forms in global analysis 
58A25 % Currents in global analysis

\end{document}